\newtheorem*{cor}{Corollary}%[section]
\newtheorem*{lem}{Lemma}
\newtheorem*{prop}{Proposition}
\theoremstyle{definition}
\theoremstyle{definition}
\newtheorem*{thm}{Theorem}
\newtheorem*{rem}{Remark}
\newenvironment{pf}{\proof}{\endproof}
\newcounter{cnt}
\newenvironment{enumerit}{\begin{list}{{\hfill\rm(\roman{cnt})\hfill}}{%
\settowidth{\labelwidth}{{\rm(iv)}}\leftmargin=\labelwidth%
\advance\leftmargin by \labelsep\rightmargin=0pt\usecounter{cnt}}}{\end{list}} \makeatletter
\def\mydggeometry{\makeatletter\dg@YGRID=1\dg@XGRID=20\unitlength=0.003pt\makeatother}
\makeatother \theoremstyle{remark}
\numberwithin{equation}{section}
\let\bwdg\bigwedge
\def\bigwedge{{\textstyle\bwdg}}
\begin{document}

\newcommand{\thmref}[1]{Theorem~\ref{#1}}
\newcommand{\secref}[1]{Section~\ref{#1}}
\newcommand{\lemref}[1]{Lemma~\ref{#1}}
\newcommand{\propref}[1]{Proposition~\ref{#1}}
\newcommand{\corref}[1]{Corollary~\ref{#1}}
\newcommand{\remref}[1]{Remark~\ref{#1}}
\newcommand{\defref}[1]{Definition~\ref{#1}}
\newcommand{\er}[1]{(\ref{#1})}
\newcommand{\id}{\operatorname{id}}
\newcommand{\ord}{\operatorname{\emph{ord}}}
\newcommand{\sgn}{\operatorname{sgn}}
\newcommand{\wt}{\operatorname{wt}}
\newcommand{\tensor}{\otimes}
\newcommand{\from}{\leftarrow}
\newcommand{\nc}{\newcommand}
\newcommand{\rnc}{\renewcommand}
\newcommand{\dist}{\operatorname{dist}}
\newcommand{\qbinom}[2]{\genfrac[]{0pt}0{#1}{#2}}
\nc{\cal}{\mathcal} \nc{\goth}{\mathfrak} \rnc{\bold}{\mathbf}
\renewcommand{\frak}{\mathfrak}
\newcommand{\supp}{\operatorname{supp}}
\newcommand{\Irr}{\operatorname{Irr}}
\renewcommand{\Bbb}{\mathbb}
\nc\bomega{{\mbox{\boldmath $\omega$}}} \nc\bpsi{{\mbox{\boldmath $\Psi$}}}
 \nc\balpha{{\mbox{\boldmath $\alpha$}}}
 \nc\bpi{{\mbox{\boldmath $\pi$}}}
\nc\bsigma{{\mbox{\boldmath $\sigma$}}} \nc\bcN{{\mbox{\boldmath $\cal{N}$}}} \nc\bcm{{\mbox{\boldmath $\cal{M}$}}} \nc\bLambda{{\mbox{\boldmath
$\Lambda$}}}
\newcommand{\bdd}{\operatorname{bdd}}
\newcommand{\conv}{\operatorname{conv}}
\newcommand{\lie}[1]{\mathfrak{#1}}
\makeatletter
\def\section{\def\@secnumfont{\mdseries}\@startsection{section}{1}%
  \z@{.7\linespacing\@plus\linespacing}{.5\linespacing}%
  {\normalfont\scshape\centering}}
\def\subsection{\def\@secnumfont{\bfseries}\@startsection{subsection}{2}%
  {\parindent}{.5\linespacing\@plus.7\linespacing}{-.5em}%
  {\normalfont\bfseries}}
\makeatother
\def\subl#1{\subsection{}\label{#1}}
 \nc{\Hom}{\operatorname{Hom}}
  \nc{\mode}{\operatorname{mod}}
\nc{\End}{\operatorname{End}} \nc{\wh}[1]{\widehat{#1}} \nc{\Ext}{\operatorname{Ext}} \nc{\ev}{\operatorname{ev}}
\nc{\Ob}{\operatorname{Ob}} \nc{\soc}{\operatorname{soc}} \nc{\rad}{\operatorname{rad}} \nc{\head}{\operatorname{head}}
\def\Im{\operatorname{Im}}
\def\gr{\operatorname{gr}}
\def\ch{\operatorname{ch}}
\def\mult{\operatorname{mult}}
\def\Max{\operatorname{Max}}
\def\ann{\operatorname{Ann}}
\def\sym{\operatorname{sym}}
\def\Res{\operatorname{\br^\lambda_A}}
\def\und{\underline}
\def\Lietg{$A_k(\lie{g})(\bsigma,r)$}

 \nc{\Cal}{\cal} \nc{\Xp}[1]{X^+(#1)} \nc{\Xm}[1]{X^-(#1)}
\nc{\on}{\operatorname} \nc{\Z}{{\bold Z}} \nc{\J}{{\cal J}} \nc{\C}{{\bold C}} \nc{\Q}{{\bold Q}}
\renewcommand{\P}{{\cal P}}
\nc{\N}{{\Bbb N}} \nc\boa{\bold a} \nc\bob{\bold b} \nc\boc{\bold c} \nc\bod{\bold d} \nc\boe{\bold e} \nc\bof{\bold f} \nc\bog{\bold g}
\nc\boh{\bold h} \nc\boi{\bold i} \nc\boj{\bold j} \nc\bok{\bold k} \nc\bol{\bold l} \nc\bom{\bold m} \nc\bon{\bold n} \nc\boo{\bold o}
\nc\bop{\bold p} \nc\boq{\bold q} \nc\bor{\bold r} \nc\bos{\bold s} \nc\boT{\bold t} \nc\boF{\bold F} \nc\bou{\bold u} \nc\bov{\bold v}
\nc\bow{\bold w} \nc\boz{\bold z} \nc\boy{\bold y} \nc\ba{\bold A} \nc\bb{\bold B} \nc\bc{\bold C} \nc\bd{\bold D} \nc\be{\bold E} \nc\bg{\bold
G} \nc\bh{\bold H} \nc\bi{\bold I} \nc\bj{\bold J} \nc\bk{\bold K} \nc\bl{\bold L} \nc\bm{\bold M} \nc\bn{\bold N} \nc\bo{\bold O} \nc\bp{\bold
P} \nc\bq{\bold Q} \nc\br{\bold R} \nc\bs{\bold S} \nc\bt{\bold T} \nc\bu{\bold U} \nc\bv{\bold V} \nc\bw{\bold W} \nc\bz{\bold Z} \nc\bx{\bold
x} \nc\KR{\bold{KR}} \nc\rk{\bold{rk}} \nc\het{\text{ht }}
\nc\loc{\rm{loc }}

\nc\toa{\tilde a} \nc\tob{\tilde b} \nc\toc{\tilde c} \nc\tod{\tilde d} \nc\toe{\tilde e} \nc\tof{\tilde f} \nc\tog{\tilde g} \nc\toh{\tilde h}
\nc\toi{\tilde i} \nc\toj{\tilde j} \nc\tok{\tilde k} \nc\tol{\tilde l} \nc\tom{\tilde m} \nc\ton{\tilde n} \nc\too{\tilde o} \nc\toq{\tilde q}
\nc\tor{\tilde r} \nc\tos{\tilde s} \nc\toT{\tilde t} \nc\tou{\tilde u} \nc\tov{\tilde v} \nc\toW{\tilde w} \nc\toz{\tilde z}

\title{Macdonald Polynomials and BGG reciprocity for current algebras}
\author{Matthew Bennett, Arkady Berenstein, Vyjayanthi Chari,\\ \\ Anton Khoroshkin and Sergey Loktev }
\email{mbenn002@gmail.com\iffalse  Department of Mathematics, University of California, Riverside, CA 92521,
 U.S.A.\fi}
 \email{arkadiy@math.uoregon.edu}
\email{chari@math.ucr.edu\iffalse Department of Mathematics, University of California, Riverside, CA 92521,
 U.S.A.\fi}
 \email{khoroshkin@gmail.com}
  \email{s.loktev@gmail.com \iffalse Higher School of Economics, 7 Vavilova Str. Moscow, Russia.\fi }
\thanks{V.C. was partially supported by DMS-0901253}\thanks{S.L. was partially supported by RFBR-CNRS-11-01-93105 and
RFBR-12-01-00944.}
\iffalse\author{Arkady Berenstein, }
\address{}
\email{}
\author{Vyjayanthi Chari}
 \address{Department of Mathematics, University of California, Riverside, CA 92521,
 ]U.S.A.}
 \email{chari@math.ucr.edu}
 \author{Anton Khoroshkin}
 \address{}
 \email{}
 \author{Sergey Loktev}
 \address{Higher School of Economics, 7 Vavilova Str. Moscow}
 \email{s.loktev@gmail.com}
 \thanks{S.L. was partially supported by RFBR-CNRS-11-01-93105 and
RFBR-12-01-00944.}
 \email{ vyjayanthi.chari@ucr.edu}\thanks{V.C. was partially supported by DMS-0901253}\fi
\maketitle

\begin{abstract} We study the category $\cal I_{\gr}$  of graded representations with finite--dimensional graded pieces for the current algebra $\lie g\otimes\bc[t]$ where $\lie g$ is a simple Lie algebra. This category has   many similarities with the category $\cal O$ of modules for $\lie g$ and  in this paper and we prove an   analogue of the famous BGG duality in the case of $\lie{sl}_{n+1}$. 
\end{abstract}

\section*{Introduction}

 The current algebra associated to a simple Lie algebra is just the Lie algebra of polynomial maps from $\bc\to \lie g$ and can be identified with the space $\lie g\otimes \bc[t]$ with the obvious commutator. Another way of thinking of this is as a maximal parabolic subalgebra in the corresponding untwisted affine Kac--Moody algebra. The Lie algebra and its universal enveloping algebra inherit a  grading coming from the natural grading on $\bc[t]$. We are  interested in the category $\cal I$ of $\bz$--graded modules for  $\lie g[t]$ with the restriction that the graded pieces are finite--dimensional. Originally, the study of this category was largely motivated by its relationship to the representation theory of affine and quantum affine algebras associated to a simple Lie algebra $\lie g$. However,  it is also now of independent interest since it yields connections with problems arising in mathematical physics, for instance  the $X=M$ conjectures, see \cite{AK}, \cite{deFK}, \cite{Naoi}.

 The category $\cal I$ is a non--semisimple category and has many similarities with other well--known categories of representations in Lie theory. However, there are many essential differences in the theory as we shall see below, which makes it quite remarkable that one can formulate  (see \cite{BCM} ) of the famous Bernstein--Gelfand--Gelfand (BGG)-reciprocity result for the category $\cal O$. In \cite{BCM} the result was proved for $\lie{sl}_2$ by different methods. In the current paper, we use the combinatorics of Macdonald polynomials to  extend the result to $\lie{sl}_{n+1}$.

The main ingredients in the original theorem of Bernstein--Gelfand--Gelfand were the irreducible modules $V(\lambda)$ for a simple Lie algebra, the Verma module $M(\lambda)$ and the projective cover $P(\lambda)$ of $V(\lambda)$ where $\lambda$ is a linear functional on a Cartan subalgebra of $\lie g$. The Verma modules have a nice freeness property and  it is relatively easy to prove that the projective module has a filtration by Verma modules. Further, the Verma modules have Jordan--Holder series and the  theorem states that the filtration multiplicity of the Verma module $M(\mu)$ in the projective $P(\lambda)$ is equal to the  multiplicity of $V(\lambda)$ in $M(\mu)$.

The irreducible objects in $\cal I$ are indexed by two parameters, $(\lambda,r)$ where $\lambda$ varies over the index set of irreducible finite--dimensional representations of $\lie g$ and $r$ varies over the integers. The category $\cal I$ also contains the projective covers $P(\lambda,r)$ of the simple object $V(\lambda,r)$. The appropriate analog of the Verma module is the global Weyl module  $W(\lambda,r)$ defined originally in \cite{CPweyl} via generators and relations. It is in fact the maximal quotient of $P(\lambda,r)$ with respect to the property that the eigenvalues of $\lie h$ lie in a certain finite set. At this point two points of similarity fail: the global Weyl modules do have a nice freeness property, but it is for a much smaller algebra than in the case of $\lie g$. Thus, we have to work harder to prove that the projective modules have a filtration by global Weyl modules. We use an idea from algebraic groups (see \cite{Donkin}) and define a canonical filtration on a object of $\cal I$ and show that the successive quotients of the filtration are isomorphic to a quotient of a direct sum of global Weyl modules.

The second difficulty we encounter is that the global Weyl modules are not of finite length. To circumvent this, we recall that they have a unique maximal finite--dimensional quotient called the local Weyl modules (see \cite{CFK}, \cite{CPweyl})  and this allows us to formulate the desired result. Namely, the projective module $P(\lambda,r)$ has a filtration by global Weyl modules and the multiplicity of $W(\mu,s)$ in $P(\lambda,r)$ is the multiplicity of $V(\lambda,s)$ in the local Weyl module  $W_{\loc}(\mu, r)$. This result was proved in \cite{BCM} in the case of $\lie{sl}_2$ and conjectured to be true in general.

In this paper, we are able to prove that that the conjecture is true iff the canonical filtration of $P(\lambda,r)$ is  actually a filtration by global Weyl modules. We then establish that this is true for  $\lie{sl}_{n+1}$.  To explain this restriction and the connection with Macdonald polynomials, we need some further comments on local Weyl modules. It was proved in \cite{CL} that for $\lie{sl}_{n+1}$ the local Weyl module is isomorphic to a Demazure module in a level one representation of the affine Kac--Moody algebra. It was proved in \cite{S} that the character of such a Demazure module is given by specialization of a  Macdonald polynomial at $t=0$. In Section 4, we use several properties of Macdonald polynomials to establish certain combintorial  identities. In Section 5, we prove that these identities have a representation theoretic interpretation, namely they give a relation between the Hilbert series of $P(\lambda,r)$ and a sum of Hilbert series of  global Weyl modules (with multiplicity). This is enough to establish the reciprocity result. In the general simply laced case, it is still true that the local Weyl modules are Demazure modules and their characters are given in \cite{I} via non--symmetric Macdonald polynomials. In the non--simply laced case, it was proved in \cite{Naoi} that local Weyl modules  have a filtration by Demazure modules  and the characters are known. The missing piece in the case when $\lie g$ is not of type $\lie{sl}_{n+1}$ is thus the  combinatorial problem studied in Section 4.  It is necessary is to establish the correct version of Lemma \ref{crucial} and we will return to this elsewhere.

{\em Acknowledgements: We thank Boris Feigin for stimulating discussions. It is a pleasure for the second, third and fifth authors to thank the organizers of  the  trimester  \lq\lq On the interactions of Representation theory with Geometry and Combinatorics,\rq\rq at the Hausdorff Institute, Bonn, 2011, when much of this work was done.}
\section{Preliminaries}

\subsection{} Throughout this paper we denote by   $\bc$  the field of complex numbers and $\bz$ (resp. $\bz_+$) the set of integers (resp.  nonnegative   integers). For a Lie algebra $\lie a$ denote by
 $\bu(\lie a)$ the universal enveloping algebra of $\lie a$. If $t$ is  an indeterminate, let $\lie a[t]=\lie a\otimes \bc[t]$ be  the Lie algebra with commutator given by, $$[a\otimes f, b\otimes g]=[a,b]\otimes fg,\ \ a, b\in\lie a, \ \ f,g,\in\bc[t].$$  We identify $\lie a$ with the Lie subalgebra $\lie a\otimes 1$ of $\lie a[t]$. The Lie algebra $\lie a[t]$ has a natural $\bz_+$--grading given by the powers of $t$ and this also induces a $\bz_+$--grading on $\bu(\lie a[t])$, and  $\bu(\lie a[t])[0]=\bu(\lie a)$. The  graded pieces of $\bu(\lie a[t])$ are  $\lie a$--modules under left and right multiplication by elements of $\lie a$ and hence also under the adjoint action of $\lie a$. \iffalse In particular, if $\dim\lie a<\infty$, then $\bu(\lie a[t])[r]$ is a free module for $\lie a$ (via left or right multiplication) of finite rank.\fi

\subsection{}
From now on, $\lie g$  denotes a finite--dimensional complex simple Lie algebra of rank $n$ and $\lie h$   a fixed Cartan subalgebra of $\lie g$. Let   $I=\{1,\cdots ,n\}$  and fix a set  $\{\alpha_i: i\in I\}$ of simple roots of $\lie g$ with respect to $\lie h$ and a set $\{\omega_i: i\in I\}$   of fundamental weights.  Let $Q$ (resp. $Q^+$) be  the integer span (resp. the nonnegative integer span) of $\{\alpha_i: i\in I\}$ and similarly define  $P$ (resp. $P^+$) to be the $\bz$ (resp. $\bz_+$) span of  $\{\omega_i: i\in I\}$. Let $\{x_i^\pm, h_i: i\in I\}$ be a set of Chevalley generators of $\lie g$ and let $\lie n^\pm$ be the Lie subalgebra of $\lie g$ generated by the elements $x_i^\pm$, $i\in I$. We have, $$\lie g\ =\ \lie n^-\oplus\lie h\oplus\lie n^+,\ \ \qquad \bu(\lie g)=\bu(\lie n^-)\otimes\bu(\lie h)\otimes\bu(\lie n^+). $$ Let $W$ be the Weyl group of $\lie g$ and let $w_0\in W$ be the longest element of $W$.  Given $\lambda,\mu\in\lie h^*$, we say that  $\lambda\le\mu$ iff $ \lambda-\mu\in Q^+$.

\subsection{}\label{locfin}
For any $\lie g$-module $M$ and $\mu\in\lie h^*$, set
\[M_\mu=\{m\in M\ :\ hm=\mu(h)m,\quad h\in\lie h\},\ \ \wt(M)=\left\{\mu\in\lie h^\ast\ :\ M_\mu\ne 0\right\}.\] We say  $M$ is a \textit{weight module}
for $\lie g$ if \[M=\bigoplus_{\mu\in\lie h^\ast} M_\mu.\]
Any finite--dimensional $\lie g$--module is  a weight module.
   It is well-known that the set of isomorphism classes of irreducible finite-dimensional $\lie g$-modules is in bijective correspondence with $P^+$. For  $\lambda\in P^+$ we denote by $V(\lambda)$ a representative of the corresponding isomorphism class.  Then $V(\lambda)$  is generated as a $\lie g$--module by  a vector $v_\lambda$ with defining relations
  \[ \lie n^+v_\lambda=0,\qquad hv_\lambda=\lambda(h)v_\lambda,\qquad (x_{{i}}^-)^{\lambda(h_{i})+1}v_{\lambda}=0,\ \ \  h\in\lie h,\ \ i\in I.\]
 and recall that $\wt V(\lambda)\subset\lambda-Q^+$. The module $V(0)$ is the trivial module for $\lie g$ and we shall write it as $\bc$. Let $\bz[P]$ be  the integral group ring $\bz[P]$ spanned by elements $e(\mu)$, $\mu\in P$ and given a finite--dimensional $\lie g$--module,
let
\[ {\rm ch}_{\lie g} M=\sum_{\mu\in P}\dim_\bc M_\mu e(\mu).\]
The  set $\{\ch_{\lie g} V(\mu): \mu\in P^+\}$ is a  linearly independent subset of $\bz[P]$.

We say that $M$ is a \textit{locally finite-dimensional} $\lie g$--module if it is a direct sum of finite--dimensional $\lie g$--modules,  in which case $M$ is necessarily a weight module.
Using Weyl's theorem one knows that  a locally finite-dimensional $\lie g$-module $M$ is isomorphic to a direct sum of modules of the form $V(\lambda)$, $\lambda\in P^+$ and  hence  $\wt M\subset P$.\iffalse Set \begin{equation}\label{ndecomp} M^{\lie n^+}=\{m\in M: \lie n^+ m=0,\}\ \qquad M^{\lie n^+}_\lambda=M^{\lie n^+}\cap M_\lambda\cong\Hom_{\lie g}(V(\lambda), M). \end{equation}
\fi

\subsection{}

Let $\cal I$ be the  category whose objects are graded $\lie g[t]$-modules $V$ with finite-dimensional graded components and where the morphisms are  maps  of graded $\lie g[t]$-modules.  Thus an object $V$ of $\cal I$, is a $\bz$--graded vector space  $V =\oplus_{s\in\bz}V[s]$, $\dim V[s]<\infty$ which admits a left action of $\lie g[t]$ satisfying
$$(\lie g\otimes t^r)V[s]\subset V[s+r],\qquad s,r\in\bz.$$ For all $r\in\bz$, the subspace $V[r]$ is a  finite--dimensional $\lie g$--module.
A morphism between  two objects $V$, $W$ of $\cal I$ is a degree zero  map of graded $\lie g[t]$--modules.  Clearly $\cal I$ is closed under taking submodules, quotients and finite direct sums. For any $r\in\bz$ we let $\tau_r$ be the grade shifting operator.
\iffalse
 If $V\in\Ob\cal I$ and $\mu\in P^+$, then $$V_\mu^{\lie n^+}= \bigoplus_{r\in\bz} V[r]_\mu^{\lie n^+},\ \qquad V[r]_\mu^{\lie n^+}= V_\mu^{\lie n^+}\cap V[r].$$\fi
  The \textit{graded character}  (resp. Hilbert series) of $V\in\Ob\mathcal I$ is the element of the space of  power series $\bz[P][[q,q^{-1}]]$, given by
\[{\rm ch}_{\rm{gr}}V=\sum_{r\in\bz} {\rm ch}_\lie{g}(V[r])q^{r},\qquad\ \ \mathbb H(V)=\sum_{r\in\bz}\dim V[r]q^r.\]
Given $V\in\Ob\cal I$, the restricted dual is $$V^*=\bigoplus_{r\in\bz} V[r]^*,\qquad  V^*[r]=V[-r]^*.$$  Then $V^*\in\Ob\cal I$ with the usual action:$$(xt^s)v^*(w)=-v^*(xt^s w),$$ and $(V^*)^*\cong V$ as objects of $\cal I$. Note that if $V\in\Ob{\cal I}$, then \[{\rm ch}_{\rm{gr}}V^*:=\sum_{r\in\bz} {\rm ch}_\lie{g}(V[r]^*)u^{-r}.\]

\section{The main result}

\subsection{} Let $\ev_0: \lie g[t]\to\lie g$ be the homomorphism of Lie algebras which maps $x\otimes f\mapsto f(0)x$. The kernel of this map is a graded ideal in $\lie g[t]$ and hence any $\lie g$--module $V$ can be regarded in an obvious way
as a graded $\lie g[t]$--module denoted $\ev_0V$.  Clearly $\ev_0V$ is an object of $\cal I$ if $\dim V < \infty$.
 The pull back of $V(\lambda)$ is denoted $V(\lambda,0)$ and we set $\tau_rV(\lambda,0)=V(\lambda,r)$ and we let $v_{\lambda,r}\in V(\lambda,r)$ be the element corresponding to $v_\lambda$. The following is elementary and a proof can be found in \cite{CG}.
 \begin{lem}\label{irred}
Any irreducible  object  in $\cal I$  is isomorphic to $V(\mu,r)$ for a unique element
  $(\mu,r)\in P^+\times \bz$ and $V(\mu,r)^*\cong V(-w_0\mu,-r)$. Further $V\in\Ob{\cal I}$ is semisimple iff $$V\cong\oplus_{(\lambda,r)\in P^+\times\bz}V(\lambda,r)^{m(\lambda,r)},
   \ \ \ m(\lambda,r)\in\bz_+.$$

  \iffalse Moreover, $\ch_{\gr} V(\lambda,r)=u^r\ch_{\lie g}V(\lambda)$ and
 $\{\ch_{\gr} V(\lambda,r):(\lambda,r)\in P^+\times\bz\}$,   is a linearly independent subset of $\bz[P][[u,u^{-1}]]$.\fi\hfill\qedsymbol\end{lem}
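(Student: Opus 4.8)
The plan is to exploit the $\bz$--grading together with the fact that $\lie g[t]=\lie g\otimes\bc[t]$ involves only nonnegative powers of $t$, so that the action of $\bu(\lie g[t])$ can never lower degree. First I would show that an irreducible object $V$ is concentrated in a single degree: if $v\in V$ is a nonzero homogeneous element of degree $d$, then $\bu(\lie g[t])v$ is a nonzero submodule lying in $\bigoplus_{s\ge d}V[s]$, hence equals $V$ by irreducibility, forcing $V[s]=0$ for $s<d$. Applying this to every degree that supports a nonzero element of $V$ shows that $V$ cannot be nonzero in two distinct degrees, so $V=V[r]$ for some $r\in\bz$.

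Once $V$ is concentrated in degree $r$, the graded ideal $\lie g\otimes t\bc[t]$ maps $V[r]$ into $\bigoplus_{s>r}V[s]=0$, so it acts trivially and the $\lie g[t]$--action factors through $\ev_0$. Then $V[r]$ is an irreducible finite--dimensional $\lie g$--module, hence isomorphic to $V(\mu)$ for a unique $\mu\in P^+$, and therefore $V\cong\tau_r\,\ev_0 V(\mu)=V(\mu,r)$. Uniqueness of $(\mu,r)$ is immediate, since modules concentrated in different degrees are not isomorphic in $\cal I$ and, for fixed $r$, distinct $\mu$ give non--isomorphic $\lie g$--modules. For the dual, I would note that $V(\mu,r)^*$ is concentrated in degree $-r$ with underlying $\lie g$--module the ordinary dual $V(\mu)^*\cong V(-w_0\mu)$, while $\lie g\otimes t\bc[t]$ continues to act trivially by the definition of the action on $V^*$; hence $V(\mu,r)^*\cong V(-w_0\mu,-r)$.

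For the final assertion, the ``if'' direction is clear: the displayed direct sum is an object of $\cal I$ precisely when $\sum_{\lambda}m(\lambda,s)\dim V(\lambda)<\infty$ for every $s$, and then it is visibly a sum of irreducible submodules. Conversely, if $V\in\Ob{\cal I}$ is semisimple then it is a sum of irreducible submodules, each of the form $V(\lambda,r)$ by the first part; a standard Zorn's lemma argument extracts a subfamily whose sum is direct and equals $V$, and collecting isomorphic summands produces the multiplicities $m(\lambda,r)\in\bz_+$. These are automatically finite: the summands contributing to the graded piece $V[s]$ are exactly the copies of $V(\lambda,s)$, so $\dim V[s]<\infty$ gives $\sum_\lambda m(\lambda,s)\dim V(\lambda)=\dim V[s]<\infty$.

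I do not expect a genuine obstacle here, as the whole argument is elementary; the one point that must not be skipped is the observation that irreducibility together with the one--sided grading forces concentration in a single degree, after which everything reduces to standard facts about finite--dimensional $\lie g$--modules and to bookkeeping with the finite--dimensionality of graded pieces (a reference with full details being \cite{CG}).
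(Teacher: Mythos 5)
Your proof is correct, and it follows the standard route that the paper itself omits (the lemma is stated without proof, with a pointer to \cite{CG}): the one--sided grading forces an irreducible object to be concentrated in a single degree, after which everything reduces to the representation theory of $\lie g$ and bookkeeping with the finite--dimensional graded pieces. No gaps; the key observation you flag (concentration in one degree, hence triviality of the action of $\lie g\otimes t\bc[t]$) is exactly the point of the argument.
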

  Suppose that $\dim V<\infty$ and  $r$ is minimal such that $V[r]\ne 0$. Then we have a short exact sequence of $\lie g[t]$--modules $$0\to\bigoplus_{s>r}V[s]\to V\to \ev_0 V[r]\to 0.$$ A simple induction on $\dim V$ now proves that for all $(\lambda,s)\in P^+\times\bz$, we have,
 \begin{equation}\label{jhmult} [V:V(\lambda,s)]=\dim\Hom_{\lie g}(V(\lambda), V[s]) =\dim\Hom_{\lie g}(V[s],V(\lambda)),\end{equation} where $[V:V(\lambda,s)]$ is the multiplicity of $V(\lambda,s)$ in a Jordan--Holder series of $V$.

 \subsection{}\label{weylloc}  For $\lambda\in P^+$ and $r\in\bz$, the local Weyl module, $W_{\loc}(\lambda,r)$,  is the  $\lie g[t]$--module generated by an element $w_{\lambda,r}$ with relations:\begin{gather*} \lie n^+[t]w_{\lambda,r}=0,\qquad (x_{i}^-)^{\lambda(h_{i})+1}w_{\lambda,r}=0,\\
 (h\otimes t^s)w_{\lambda,r}= \delta_{s,0}\lambda(h)w_{\lambda,r},
 \end{gather*} where $i\in I$, $h\in\lie h$ and $s\in\bz_+$. The next proposition summarizes the results on the local Weyl module which are needed to state our main result. A proof of this proposition can be found in \cite{CPweyl}.
   \begin{prop} Let $(\lambda,r)\in P^+\times\bz$.
  Let $(\lambda,r)\in P^+\times\bz$.  The $\lie g[t]$--module $W_{\loc}(\lambda,r)$ is indecomposable and finite--dimensional. Moreover,  $\dim W_{\loc}(\lambda,r)_\lambda =\dim W_{\loc}(\lambda,r)[r]_\lambda=1$,
 and  $V(\lambda,r)$ is the unique irreducible quotient of $W_{\loc}(\lambda,r)$.
 \hfill\qedsymbol
\end{prop}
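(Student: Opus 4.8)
The plan is to reduce to the case $r=0$ by applying the grade-shift functor $\tau_r$, and then to establish, in order, that $W_{\loc}(\lambda):=W_{\loc}(\lambda,0)$ is generated as a $\bu(\lie n^-[t])$-module by $w_\lambda$ (with the one-dimensional top weight space falling out as a byproduct), that it is finite-dimensional, and finally the statements on indecomposability and quotients. By the PBW theorem $\bu(\lie g[t])=\bu(\lie n^-[t])\,\bu(\lie h[t])\,\bu(\lie n^+[t])$, and the defining relations say that $\lie n^+[t]$ annihilates $w_\lambda$ while $\lie h[t]$ acts on it through the character $h\otimes t^s\mapsto\delta_{s,0}\lambda(h)$; hence $W_{\loc}(\lambda)=\bu(\lie n^-[t])\,w_\lambda$. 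Since $\bu(\lie n^-[t])$ carries only weights in $-Q^+$ and its weight-$0$ component is $\bc\cdot1$, which lies in degree $0$, it follows at once that $\wt W_{\loc}(\lambda)\subset\lambda-Q^+$ and that $W_{\loc}(\lambda)_\lambda=\bc w_\lambda$ is concentrated in degree $0$; applying $\tau_r$ yields $\dim W_{\loc}(\lambda,r)_\lambda=\dim W_{\loc}(\lambda,r)[r]_\lambda=1$.

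Finite-dimensionality is the real content. I would first show $W_{\loc}(\lambda)$ is an integrable $\lie g$-module: the element $x_i^+\otimes1$ acts locally nilpotently because all weights are $\le\lambda$, and for $x_i^-\otimes1$ one observes that the subspace of vectors lying in a finite-dimensional $\lie{sl}_2^{(i)}$-submodule is stable under all of $\lie g[t]$ — since $\ad(\lie{sl}_2^{(i)})$ preserves each finite-dimensional space $\lie g\otimes t^s$, one may commute $\bu(\lie{sl}_2^{(i)})$ past $\lie g\otimes t^s$ — and this subspace contains $w_\lambda$, hence equals $W_{\loc}(\lambda)$. Consequently $W_{\loc}(\lambda)\cong\bigoplus_{\mu\in P^+,\ \mu\le\lambda}V(\mu)^{\oplus m_\mu}$ as a $\lie g$-module, so its set of weights is $W$-stable and bounded above, hence finite, and it only remains to bound each $m_\mu$, i.e.\ to show $\dim W_{\loc}(\lambda)_\mu<\infty$. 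For this I would invoke the Garland-type identities in $\bu(\lie{sl}_2^{(i)}[t])$, which evaluated on $w_\lambda$ give $(x_i^-\otimes t^s)\,w_\lambda=0$ for $s>\lambda(h_i)$; fed into a PBW monomial spanning set for $\bu(\lie n^-[t])\,w_\lambda$, together with the relations these produce after commuting, they force all but finitely many monomials to annihilate $w_\lambda$. Carrying out this bookkeeping for all positive roots, not just the simple ones, is the step I expect to be the genuine obstacle — it is precisely the finiteness argument of \cite{CPweyl}.

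Indecomposability is then immediate: in any decomposition $W_{\loc}(\lambda)=A\oplus B$ the one-dimensional $\lambda$-weight space lies in one summand, say $w_\lambda\in A$, and since $w_\lambda$ generates the module, $A=W_{\loc}(\lambda)$. For the last assertion, any proper $\lie g[t]$-submodule $U\subsetneq W_{\loc}(\lambda,r)$ has $U_\lambda=0$ — otherwise $U\ni w_{\lambda,r}$ and hence $U=W_{\loc}(\lambda,r)$ — so the sum of all proper submodules is again proper, which gives a unique maximal submodule and hence a unique irreducible quotient $Q$. Being finite-dimensional and an object of $\cal I$, $Q\cong V(\mu,s)$ by \lemref{irred}; the image $\bar w$ of $w_{\lambda,r}$ is a nonzero $\lie g[t]$-generator of $Q$ of degree $r$, annihilated by $\lie n^+$ and of $\lie h$-weight $\lambda$, whereas $V(\mu,s)$ is concentrated in degree $s$ and its $\lie n^+$-invariants form the highest-weight line of weight $\mu$. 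Hence $s=r$, $\mu=\lambda$, and $Q\cong V(\lambda,r)$, as desired.
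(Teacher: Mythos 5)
Your argument is correct and is essentially the proof of \cite{CPweyl}, which is exactly where the paper sends the reader for this proposition (no proof is given in the text); you have correctly isolated the one step with real content, namely bounding the $t$-degrees in the PBW spanning set for all positive root vectors and not just the simple ones, and that bookkeeping is precisely what the cited reference carries out. The only point worth adding is that to conclude $\dim W_{\loc}(\lambda,r)_\lambda=1$ rather than $\le 1$ you should observe that $w_{\lambda,r}\neq 0$, which follows because $\tau_r\ev_0 V(\lambda)$ satisfies the defining relations and hence is a nonzero quotient of $W_{\loc}(\lambda,r)$.
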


\subsection{} For $(\lambda,r)\in P^+\times\bz$, the  global Weyl module $W(\lambda,r)$ is generated as a $\lie g[t]$--module by an element $w_{\lambda,r}$ with relations:\begin{gather*} \lie n^+[t]w_{\lambda,r}=0,\quad (x_{i}^-)^{\lambda(h_{i})+1}w_{\lambda,r}=0,\quad
 hw_{\lambda,r}=\lambda(h)w_{\lambda,r},
 \end{gather*} where $i\in I$ and $h\in\lie h$. The following result can be found in \cite{CPweyl} (see also \cite{CFK}).
 \begin{prop} \label{globweyl}For $(\lambda,r)\in P^+\times\bz$, we have that   $W(\lambda,r)$ is an indecomposable object of ${\cal I}$ and $\wt W(\lambda,r) =\wt V(\lambda,r)$. Further,
 \begin{enumerit}
   \item[(i)] $W_{\loc}(\lambda,r)$ is a quotient of $W(\lambda,r)$ and  $V(\lambda,r)$ is the unique irreducible quotient of $W(\lambda,r)$.
        \item[(ii)] $W(0,r)\cong\bc$ and  if $\lambda\ne 0$, the modules $W(\lambda,r)$ are infinite-dimensional.
 \item[(iii)] We have, $$\ch_{\gr}W(\lambda,r)= \ch_{\gr}V(\lambda,r)+\sum_{s>r}\sum_{\mu\le\lambda}\dim\Hom_{\lie g}(W\lambda,r)[s]: V(\mu))\ch_{\gr}V(\mu,s),$$ and $\{\ch_{\gr} W(\lambda,r): (\lambda,r)\in P^+\times\bz\}$  is a linearly independent subset of $\bz[P][[u, u^{-1}]]$.\end{enumerit}\hfill\qedsymbol\end{prop}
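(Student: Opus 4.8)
The plan is to expose the triangular structure of $W(\lambda,r)$, identify its degree-$r$ component, establish integrability over $\lie g$, and then read off the remaining assertions; the one genuinely substantial ingredient will be the description of the highest-weight space in part (ii). Write $w=w_{\lambda,r}$, placed in degree $r$, so that $W(\lambda,r)=\bu(\lie g[t])w$ is $\bz_{\ge r}$-graded with a $\lie g[t]$-action compatible with the grading and $W(\lambda,r)=\tau_rW(\lambda,0)$. By PBW and $\lie n^+[t]w=0$ we have $W(\lambda,r)=\bu(\lie n^-[t])\bu(\lie h[t])w$, hence $W(\lambda,r)_\lambda=\bu(\lie h[t])w$; and in degree $r$ only $\bu(\lie g[t])[0]=\bu(\lie g)$ contributes, so $W(\lambda,r)[r]=\bu(\lie g)w$. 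Since $w$ satisfies exactly the defining relations of $V(\lambda)$ and is nonzero (its image in the quotient $W_{\loc}(\lambda,r)$, whose $\lambda$-weight space is one-dimensional, is nonzero), we conclude $W(\lambda,r)[r]\cong V(\lambda)$, and in particular $\dim W(\lambda,r)[r]_\lambda=1$.

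Next I would show that $W(\lambda,r)$ is integrable as a $\lie g$-module. For $i\in I$, the relations $x_i^+w=0$, $h_iw=\lambda(h_i)w$, $(x_i^-)^{\lambda(h_i)+1}w=0$ show that $w$ generates a finite-dimensional module over $\lie{sl}_2^{(i)}=\langle x_i^\pm,h_i\rangle$; since $\lie g[t]=\bigoplus_k\lie g\otimes t^k$ is a direct sum of finite-dimensional $\ad\,\lie{sl}_2^{(i)}$-modules, the standard propagation argument shows $x_i^\pm$ act locally nilpotently on $W(\lambda,r)$: the maximal locally $\lie{sl}_2^{(i)}$-finite submodule is $\lie g[t]$-stable, because $\bu(\lie{sl}_2^{(i)})\,yv$ lies in $(\bu(\lie{sl}_2^{(i)})y)(\bu(\lie{sl}_2^{(i)})v)$, which is finite-dimensional whenever $\bu(\lie{sl}_2^{(i)})v$ is. Hence $W(\lambda,r)$ is a direct sum of finite-dimensional simple $\lie g$-modules, so its weights lie in $P$ and form a $W$-stable subset of $\lambda-Q^+$; as a dominant weight in $\lambda-Q^+$ is a weight of $V(\lambda)$ while $\wt V(\lambda)\subseteq\wt W(\lambda,r)$ (since $V(\lambda,r)$ is a quotient), this gives $\wt W(\lambda,r)=\wt V(\lambda)$. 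Finally, each graded piece $W(\lambda,r)[s]$ is a $\lie g$-submodule generated over $\lie g$ by the finite-dimensional space $\bu(\lie g\otimes t\bc[t])[s-r]w$; being integrable and finitely generated over the semisimple $\lie g$, it is finite-dimensional, so $W(\lambda,r)\in\Ob\cal I$.

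Comparing defining relations gives a surjection $W(\lambda,r)\twoheadrightarrow W_{\loc}(\lambda,r)$, $w\mapsto w$; composing with $W_{\loc}(\lambda,r)\twoheadrightarrow V(\lambda,r)$ shows $V(\lambda,r)$ is a simple quotient. Conversely, if $W(\lambda,r)\twoheadrightarrow Q$ with $Q$ simple in $\cal I$, then $Q\cong V(\mu,s)$ is generated by the nonzero image of $w\in Q[r]$, which is $\lie n^+$-invariant of $\lie h$-weight $\lambda$; matching the degrees in which $Q$ is supported forces $s=r$, and since the $\lie n^+$-invariants of $V(\mu)$ are spanned by its highest weight vector, $\mu=\lambda$ --- so $V(\lambda,r)$ is the unique simple quotient, which is (i). For indecomposability: in any decomposition $W(\lambda,r)=M_1\oplus M_2$ in $\cal I$, the generator $w$ lies in the one-dimensional space $W(\lambda,r)[r]_\lambda$, hence in a single summand, which must then be all of $W(\lambda,r)$. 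For (ii): if $\lambda=0$ the relations read $x_i^-w=0$, $hw=0$, $\lie n^+[t]w=0$, and applying $[x_i^+\otimes t^k,x_i^-]=h_i\otimes t^k$, then $[h_i\otimes t^k,x_i^-]=-2\,x_i^-\otimes t^k$, to $w$ gives $\lie h[t]w=0=\lie n^-[t]w$, so $\lie g[t]w=0$ and $W(0,r)\cong\bc$; if $\lambda\neq0$, then $W(\lambda,r)_\lambda=\bu(\lie h[t])w\cong A_\lambda:=\bu(\lie h[t])/\ann(w)$, and the decisive input --- the structure theory of global Weyl modules, which identifies $A_\lambda$ with a polynomial ring in $\sum_{i\in I}\lambda(h_i)\geq1$ variables --- shows $W(\lambda,r)$ is infinite-dimensional.

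For (iii): each $W(\lambda,r)[s]$ is a finite-dimensional $\lie g$-module with weights in $\lambda-Q^+$ and $W(\lambda,r)[r]\cong V(\lambda)$, so decomposing the graded pieces into simple $\lie g$-modules and reassembling with the grading yields
\[\ch_{\gr}W(\lambda,r)=\ch_{\gr}V(\lambda,r)+\sum_{s>r}\sum_{\mu\le\lambda}\dim\Hom_{\lie g}(V(\mu),W(\lambda,r)[s])\,\ch_{\gr}V(\mu,s),\]
which is the asserted identity. Since $\{\ch_{\lie g}V(\mu):\mu\in P^+\}$ is linearly independent and $V(\mu,s)$ is concentrated in grading degree $s$, the family $\{\ch_{\gr}V(\mu,s)\}$ is linearly independent, and the displayed formula expresses $\ch_{\gr}W(\lambda,r)$ as $\ch_{\gr}V(\lambda,r)$ plus terms of grading degree $>r$; hence in a finite vanishing combination $\sum c_{\lambda,r}\ch_{\gr}W(\lambda,r)=0$, choosing the least $r_0$ with some $c_{\lambda,r_0}\neq0$ and reading off grading degree $r_0$ gives $\sum_\lambda c_{\lambda,r_0}\ch_{\lie g}V(\lambda)=0$, a contradiction, so $\{\ch_{\gr}W(\lambda,r):(\lambda,r)\in P^+\times\bz\}$ is linearly independent. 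The step I expect to be the main obstacle is the one flagged in (ii): identifying $W(\lambda,r)_\lambda$ with $A_\lambda$ and proving $A_\lambda$ is polynomial in $\sum_i\lambda(h_i)$ variables --- this rests on the Garland-type identities in each $\lie{sl}_2^{(i)}\otimes\bc[t]$ forced by $(x_i^-)^{\lambda(h_i)+1}w=0$, and is precisely the ``freeness over a much smaller algebra'' phenomenon noted in the introduction; the integrability argument above, and the passage from it to finite-dimensionality of the graded pieces, is the secondary delicate point.
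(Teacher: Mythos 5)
Your proof is essentially correct, and since the paper itself offers no argument for this proposition --- it simply cites \cite{CPweyl} and \cite{CFK} --- what you have written is a sound reconstruction of the standard proof from those sources: PBW triangularity, the $\lie{sl}_2$-propagation argument for integrability, finite generation of each graded piece over $\bu(\lie g)$, and the one-dimensionality of $W(\lambda,r)[r]_\lambda$ driving indecomposability and uniqueness of the simple quotient are all exactly the right ingredients, and your linear-independence argument for (iii) by looking at the lowest occupied grade is correct.

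The one place you lean on an unproved external input is the infinite-dimensionality in (ii), where you invoke the theorem that $A_\lambda=\bu(\lie h[t])/\ann(w)$ is a polynomial ring in $\sum_i\lambda(h_i)$ variables. That is indeed the deep structural fact from \cite{CPweyl}/\cite{CFK}, but it is much more than you need here, and a self-contained soft argument is available: for each $a\in\bc$ the evaluation module $\ev_aV(\lambda)$ satisfies the defining relations of $W(\lambda,r)$ (forgetting the grading), so one gets algebra characters $A_\lambda\to\bc$ sending $h\otimes t^k\mapsto a^k\lambda(h)$; for $\lambda\ne0$ these characters are pairwise distinct as $a$ varies, so the commutative algebra $A_\lambda$ has infinitely many maximal ideals and is therefore infinite-dimensional, whence so is $W(\lambda,r)_\lambda=\bu(\lie h[t])w$. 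Substituting this for the citation would make your proof entirely self-contained. Two cosmetic points: in the propagation step the first factor should be the adjoint orbit $\bigl(\ad\bu(\lie{sl}_2^{(i)})\bigr)(y)$ rather than the left-multiplication orbit, which is what your appeal to $\lie g[t]$ being a sum of finite-dimensional $\ad\lie{sl}_2^{(i)}$-modules actually uses; and your multiplicity $\dim\Hom_{\lie g}(V(\mu),W(\lambda,r)[s])$ agrees with the paper's (garbled) expression because each $W(\lambda,r)[s]$ is a semisimple $\lie g$-module.
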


\subsection{}
  We say that $M\in\Ob\cal I$ admits a filtration by global Weyl modules if there exists a decreasing family of submodules $$M=M_0 \supset M_1\supset \cdots,\qquad \bigcap_k M_k=\{0\},$$  such that\begin{gather*} M_k/M_{k+1}\cong \bigoplus_{(\lambda,r)\in P^+\times\bz}W(\lambda,r)^{m_k(\lambda,r)}, \end{gather*}
 for some choice of $m_k(\lambda,r)\in \bz_+$.
Since $\dim M[r]_\lambda<\infty$ for all $(\lambda,r)\in P^+\times\bz$, we see that if $M$ has a  filtration by global Weyl modules,   then $m_k(\lambda,r)=0$ for all but finitely many $k$.
Further, we have $$\ch_{\gr}M= \sum_{k\ge 0}\ch_{\gr}M_k/M_{k+1}=
\sum_{(\lambda,r)\in\bz}\left(\sum_{k\ge 0}m_k(\lambda,r)\right)\ch_{\gr} W(\lambda,r).$$   Proposition \ref{globweyl}(iii) now implies that   the filtration multiplicity  $$[M: W(\lambda,r)]=\sum_{k\ge 0}m_k(\lambda,r),
$$ is well -defined and  independent of the choice of the filtration.

\subsection{}\label{proj2} The category $\cal I$ contains the projective cover of a simple object. For $(\lambda,r)\in P^+\times\bz$, set
 \begin{equation}\label{proj} P(\lambda,r)=\bu(\lie g[t])\otimes_{\bu(\lie g)} V(\lambda,r).\end{equation} Note that \begin{gather*} P(\lambda,r)[r]\ \cong_{\lie g} V(\lambda) ,\qquad  P(\lambda, r)[s]\ =0 \ \ \ s<r.\end{gather*}
The following  was proved in \cite[Proposition 2.1]{CG}.
\begin{prop}\label{pdefrel} For $(\lambda,r)\in P^+\times\bz$,  the object $P(\lambda,r)$ is generated by the element $p_{\lambda,r}=1\otimes v_{\lambda}$ with defining relations: $$\lie n^+p_{\lambda,r}=0,\ \ h p_{\lambda,r}=\lambda(h)p_{\lambda,r},\ \ (x_i^-)^{\lambda(h_i)+1} p_{\lambda,r}=0,$$ and is the  projective cover in $\mathcal I$ of  $V(\lambda, r)$.  Moreover, if $M\in\Ob{\cal I}$ then $$\Hom_{\cal I}(P(\lambda,r), M)\cong\Hom_{\lie g}(V(\lambda), M[r]).$$ \hfill\qedsymbol
\end{prop}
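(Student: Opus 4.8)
The plan is to realize $P(\lambda,r)$ concretely through the PBW theorem, read off the $\Hom$--formula from the tensor--hom adjunction, deduce projectivity from an exactness argument, and then pin down the presentation and the projective--cover property using the grading and the irreducibility of $V(\lambda)$. First I would write $\lie g[t]_+=\lie g\otimes t\bc[t]$, a graded ideal with $\lie g[t]=\lie g\oplus\lie g[t]_+$, and use that the multiplication map $\bu(\lie g[t]_+)\otimes\bu(\lie g)\to\bu(\lie g[t])$ is an isomorphism of graded right $\bu(\lie g)$--modules; then $\bu(\lie g[t])$ is free over $\bu(\lie g)$, and $P(\lambda,r)\cong\bu(\lie g[t]_+)\otimes_\bc V(\lambda)$ as graded vector spaces, with $V(\lambda)$ placed in degree $r$. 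Since $\lie g[t]_+$ is finite--dimensional and concentrated in positive degrees, each $\bu(\lie g[t]_+)[s]$ is finite--dimensional and vanishes for $s<0$, whence $P(\lambda,r)[s]$ is finite--dimensional, $P(\lambda,r)[s]=0$ for $s<r$, and $P(\lambda,r)[r]\cong_{\lie g}V(\lambda)$. As $\lie g[t]$ only preserves or raises degree, $P(\lambda,r)\in\Ob\cal I$, and it is generated over $\lie g[t]$ by $p_{\lambda,r}=1\otimes v_\lambda$ because $V(\lambda)=\bu(\lie g)v_\lambda$.

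Next, for $M\in\Ob\cal I$ I would use the graded tensor--hom adjunction for the inclusion $\bu(\lie g)\subset\bu(\lie g[t])$: a degree--zero $\lie g[t]$--map out of $\bu(\lie g[t])\otimes_{\bu(\lie g)}V(\lambda,r)$ is the same datum as a degree--zero graded $\lie g$--map out of $V(\lambda,r)$, and since $V(\lambda,r)$ is concentrated in degree $r$ this is precisely a $\lie g$--map $V(\lambda)\to M[r]$; this yields the asserted isomorphism $\Hom_{\cal I}(P(\lambda,r),M)\cong\Hom_{\lie g}(V(\lambda),M[r])$. For projectivity, $M\mapsto M[r]$ is an exact functor from $\cal I$ to finite--dimensional $\lie g$--modules, and $\Hom_{\lie g}(V(\lambda),-)$ is exact on finite--dimensional $\lie g$--modules because that category is semisimple (Weyl's theorem); hence $\Hom_{\cal I}(P(\lambda,r),-)$ is exact and $P(\lambda,r)$ is projective in $\cal I$.

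Finally, for the presentation and projective--cover claims: the three listed relations hold at $p_{\lambda,r}=1\otimes v_\lambda$ because $x(1\otimes v)=1\otimes xv$ for $x\in\lie g$ and $v_\lambda$ obeys the defining relations of $V(\lambda)$; so there is a surjection $\widetilde P\to P(\lambda,r)$, $\widetilde p\mapsto p_{\lambda,r}$, where $\widetilde P$ is the graded $\lie g[t]$--module presented by exactly those relations. In $\widetilde P$ the $\lie g$--submodule $\bu(\lie g)\widetilde p$ is a nonzero quotient of $V(\lambda)$, hence $\cong V(\lambda)$, so $\widetilde P=\bu(\lie g[t]_+)\bu(\lie g)\widetilde p$ is a quotient of $\bu(\lie g[t]_+)\otimes_\bc V(\lambda)$; tracing the maps, the composite $\bu(\lie g[t]_+)\otimes_\bc V(\lambda)\twoheadrightarrow\widetilde P\twoheadrightarrow P(\lambda,r)$ is the PBW isomorphism of the first paragraph, so $\widetilde P\to P(\lambda,r)$ is an isomorphism and the relations are defining. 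By the $\Hom$--formula and Schur's lemma, $\End_{\cal I}(P(\lambda,r))\cong\bc$, so $P(\lambda,r)$ is indecomposable. Moreover, since $P(\lambda,r)$ is generated in degree $r$ by $p_{\lambda,r}$ and $P(\lambda,r)[r]\cong V(\lambda)$ is $\lie g$--irreducible, any proper graded submodule $N$ has $N[r]=0$ (otherwise $N\ni p_{\lambda,r}$, forcing $N=P(\lambda,r)$), so $N\subseteq\bigoplus_{s>r}P(\lambda,r)[s]$; this last submodule is therefore the unique maximal submodule, and its quotient is $V(\lambda,r)$. Consequently the kernel of $P(\lambda,r)\to V(\lambda,r)$ is superfluous, and since $P(\lambda,r)$ is projective it is the projective cover of $V(\lambda,r)$.

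I expect the only genuinely non--formal step to be the passage from the abstract induced module to the presentation by generators and relations --- showing the three listed relations already suffice --- which rests on the PBW count of the first paragraph. The remaining ingredients (the tensor--hom adjunction, semisimplicity of finite--dimensional $\lie g$--modules, and the bookkeeping forced by the nonnegative grading) are routine.
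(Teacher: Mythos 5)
Your proof is correct and is essentially the standard argument; the paper itself does not prove this proposition but cites \cite[Proposition 2.1]{CG}, whose proof runs along the same lines (PBW freeness of $\bu(\lie g[t])$ over $\bu(\lie g)$, Frobenius reciprocity for the induction functor, exactness of $M\mapsto M[r]$ combined with Weyl semisimplicity, and the fact that $\bigoplus_{s>r}P(\lambda,r)[s]$ is the unique maximal submodule). One small slip: $\lie g[t]_+=\lie g\otimes t\bc[t]$ is not finite--dimensional; what you actually need, and what is true, is that each graded component of $\lie g[t]_+$ is finite--dimensional and the grading is strictly positive, which is what forces $\dim\bu(\lie g[t]_+)[s]<\infty$ and $\bu(\lie g[t]_+)[s]=0$ for $s<0$.
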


\subsection{} {\em From now on we fix an enumeration $\lambda_0,\lambda_1,\cdots,\lambda_k,\cdots$ of $P^+$ satisfying: $$\lambda_r-\lambda_s\in Q^+\implies r\ge s.$$}
We shall need the following result. Versions of this  have been proved  in the literature (see \cite{CFK} for instance). But  we include a proof here since we need it in this precise form for this paper.
\begin{lem} For  $k\ge 0$, the global Weyl module $W(\lambda_k,r)$ is the quotient of $P(\lambda_k,r)$ obtained by imposing the single additional relation $\lie n^+[t] p_{\lambda_k,r}=0.$ Equivalently, $W(\lambda_k,r)$ is the maximal quotient of $P(\lambda_k,r)$ whose weights lie in $\cup_{s=0}^k \lambda_s-Q^+$.  \end{lem}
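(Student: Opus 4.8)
The plan is to verify the two quotient descriptions of $W(\lambda_k,r)$ directly from the defining relations given in the two propositions. Recall from Proposition \ref{pdefrel} that $P(\lambda_k,r)$ is generated by $p_{\lambda_k,r}$ with relations $\lie n^+p_{\lambda_k,r}=0$, $hp_{\lambda_k,r}=\lambda_k(h)p_{\lambda_k,r}$, $(x_i^-)^{\lambda_k(h_i)+1}p_{\lambda_k,r}=0$, while $W(\lambda_k,r)$ is generated by $w_{\lambda_k,r}$ with the additional relation $\lie n^+[t]w_{\lambda_k,r}=0$. First I would let $\widetilde W$ denote the quotient of $P(\lambda_k,r)$ by the submodule generated by $\lie n^+[t]p_{\lambda_k,r}$, and observe that the image $\bar p$ of $p_{\lambda_k,r}$ in $\widetilde W$ satisfies all the defining relations of $w_{\lambda_k,r}$ (the relation $hw_{\lambda_k,r}=\lambda_k(h)w_{\lambda_k,r}$ is inherited from $P(\lambda_k,r)$, and $\lie n^+[t]\bar p=0$ by construction); hence there is a surjection $\widetilde W\twoheadrightarrow W(\lambda_k,r)$. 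Conversely, the relations defining $W(\lambda_k,r)$ include all the relations defining $P(\lambda_k,r)$ together with $\lie n^+[t]w_{\lambda_k,r}=0$, so there is a surjection $P(\lambda_k,r)\twoheadrightarrow W(\lambda_k,r)$ killing $\lie n^+[t]p_{\lambda_k,r}$, which factors through $\widetilde W$; the two maps are mutually inverse, giving $\widetilde W\cong W(\lambda_k,r)$. This settles the first sentence of the statement.

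For the equivalence with the ``maximal quotient with weights in $\bigcup_{s=0}^k\lambda_s-Q^+$'' description, I would argue as follows. Write $P=P(\lambda_k,r)$ and let $N\subseteq P$ be the sum of all graded submodules $N'$ with $\wt(P/N')\subseteq\bigcup_{s=0}^k\lambda_s-Q^+$; then $P/N$ is by definition the maximal such quotient. One inclusion is easy: since $\wt W(\lambda_k,r)=\wt V(\lambda_k,r)\subseteq\lambda_k-Q^+\subseteq\bigcup_{s=0}^k\lambda_s-Q^+$ by Proposition \ref{globweyl}, the submodule $\bu(\lie g[t])\lie n^+[t]p_{\lambda_k,r}$ is contained in $N$, so $P/N$ is a quotient of $W(\lambda_k,r)$. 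For the reverse, I must show that the generator $\bar p\in P/N$ satisfies $\lie n^+[t]\bar p=0$. Here is where the enumeration hypothesis $\lambda_r-\lambda_s\in Q^+\implies r\ge s$ enters: a weight vector in $\lie n^+[t]\bar p$ has weight $\lambda_k+\alpha$ for some $\alpha\in Q^+\setminus\{0\}$, and if it were nonzero it would generate a highest-weight-type vector (after applying $\lie n^+[t]$ to kill off higher weights — one should be slightly careful and pass to a maximal weight occurring in $\bu(\lie g[t])\lie n^+[t]\bar p$) of some dominant weight $\mu\geq\lambda_k$ with $\mu\neq\lambda_k$, forcing $\mu=\lambda_j$ with $j>k$, so $\mu\notin\bigcup_{s=0}^k\lambda_s-Q^+$, contradicting $\wt(P/N)\subseteq\bigcup_{s=0}^k\lambda_s-Q^+$. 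Hence $\lie n^+[t]\bar p=0$ and $P/N$ is also a quotient of $W(\lambda_k,r)$; combined with the previous paragraph this gives $P/N\cong W(\lambda_k,r)$.

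I expect the main obstacle to be the careful bookkeeping in the last step: one needs to ensure that the set of weights appearing in $\bu(\lie g[t])\lie n^+[t]\bar p$, if nonempty, contains a weight $\mu$ that is \emph{dominant} (so that it can be identified with some $\lambda_j$) and strictly larger than $\lambda_k$. The standard device is to choose $\mu$ maximal among $\wt(\bu(\lie g[t])\lie n^+[t]\bar p)$ with respect to the partial order on $P$; maximality forces $\lie n^+[t]$ to annihilate the corresponding weight space, and since the zeroth graded piece acts as $\lie n^+$, in particular $\lie n^+$ annihilates a nonzero vector of weight $\mu$, so $\mu\in P^+$. Then $\mu-\lambda_k\in Q^+$ and $\mu\neq\lambda_k$ (as $\alpha\neq 0$) give, via the enumeration property, $\mu=\lambda_j$ with $j>k$; this weight lies in $\wt(P/N)$ yet not in $\bigcup_{s=0}^k\lambda_s-Q^+$ (using that the $\lambda_s$ are distinct and that $\lambda_j-\lambda_s\in Q^+$ would again force $j\le s\le k$), the desired contradiction. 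All other steps are direct applications of the generators-and-relations presentations and of Proposition \ref{globweyl}.
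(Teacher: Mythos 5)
Your first paragraph, and the substantive highest-weight argument in your second, do match the paper's proof: the paper also realizes the maximal quotient explicitly (as $P(\lambda_k,r)/\sum_{s>k}\bu(\lie g[t])P(\lambda_k,r)_{\lambda_s}$) and kills $\lie n^+[t]$ on the generator by observing that a nonzero vector of weight $\lambda_k+\alpha_i$ would force a dominant weight $\lambda_j$ with $j>k$ to occur, contradicting the weight constraint via the enumeration property. Your ``maximal weight'' device is a legitimate variant (the paper instead works inside a single finite-dimensional graded piece, which sidesteps the existence issue you flag; note also that to guarantee $\mu>\lambda_k$ you should take $\mu$ maximal among the weights that are $\ge\lambda_k+\alpha_i$, not among all weights of the submodule).

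The genuine problem is your formalization of ``maximal quotient'' and the resulting logic. If $N$ is the \emph{sum} of all submodules $N'$ with $\wt(P/N')\subseteq S:=\bigcup_{s=0}^k\lambda_s-Q^+$, then $N=P$ (take $N'=P$), so $P/N=0$; the maximal quotient corresponds to the \emph{smallest} such $N'$, e.g.\ $N_0=\sum_{\mu\notin S}\bu(\lie g[t])P_\mu$, which is contained in every admissible $N'$ and itself satisfies $\wt(P/N_0)\subseteq S$. More seriously, with your $N$ the two halves of your argument establish the \emph{same} surjection: ``the kernel $K=\bu(\lie g[t])\lie n^+[t]p_{\lambda_k,r}$ is contained in $N$'' and ``$\lie n^+[t]\bar p=0$ in $P/N$'' are both the statement $K\subseteq N$, each yielding only a surjection $W(\lambda_k,r)\twoheadrightarrow P/N$, so the claimed isomorphism does not follow as written. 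The repair is to use $N_0$: then $\wt W(\lambda_k,r)\subseteq\lambda_k-Q^+\subseteq S$ gives $N_0\subseteq K$, i.e.\ $P/N_0\twoheadrightarrow W(\lambda_k,r)$ (this is the genuinely easy direction), while your highest-weight/enumeration argument gives $K\subseteq N_0$, i.e.\ the reverse surjection, and the two together give $N_0=K$.
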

\begin{pf}  The first statement is obvious from the defining relations of $W(\lambda_k,r)$ and $P(\lambda_k,r)$. For the second, let $$\tilde W =P(\lambda_k,r)/\sum_{s>k}\bu(\lie g[t])P(\lambda_k,r)_{\lambda_s}.$$ Clearly $$\wt\tilde W\subset\bigcup_{s=0}^k \lambda_s-Q^+,$$ and $\tilde W$ is the maximal quotient with this property. Let $\tilde w\in\tilde W$ be the image of $p_{\lambda_k,r}$. Since $\wt W(\lambda_k,r)\subset \lambda_k-Q^+$ it follows that $W(\lambda_k,r)$ is  a quotient of $\tilde W$ via a morphism which maps $\tilde w\to w_{\lambda_k,r}$. The element   $w'=(x_i^+\otimes t^s)\tilde w$ has weight $\lambda_k+\alpha_i>\lambda_k$. If it is non--zero in $\tilde W$ then it would follow from the representation theory of $\lie g$ that $\tilde W_{\lambda_s}\ne 0$ for some $s>k$ which is a contradiction.  Hence $\lie n^+[t]\tilde w= 0$ and there exists a well--defined surjective morphism $W(\lambda_k,r)\to \tilde W$ sending $w_{\lambda,r}\to \tilde w$ proving that $W(\lambda,r)\cong \tilde W$ as required.
\end{pf}

\subsection{} The main result of this paper is the following. It was conjectured in \cite{BCM} for all $\lie g$ and proved there in the case of  $\lie{sl}_2$.
\begin{thm} \label{bgg} Assume that $\lie g$ is of type $\lie{sl}_{n+1}$. For $(\lambda,r)\in P^+\times\bz$, the module $P(\lambda,r)$ has a filtration by global Weyl modules and $$[P(\lambda,r): W(\mu,s)]= [W_{\loc}(\mu,r): V(\lambda,s)].$$
\end{thm}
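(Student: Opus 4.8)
The plan is to reduce \thmref{bgg} to two assertions: (a) that $P(\lambda,r)$ admits a filtration by global Weyl modules, and (b) that in any such filtration the multiplicity $[P(\lambda,r):W(\mu,s)]$ equals $[W_{\loc}(\mu,r):V(\lambda,s)]$. I would first address (b) assuming (a), since it is essentially a Hilbert-series bookkeeping argument combined with \propref{globweyl}(iii) and \propref{pdefrel}. Once a global Weyl filtration exists, $\ch_{\gr}P(\lambda,r) = \sum_{(\mu,s)} [P(\lambda,r):W(\mu,s)]\,\ch_{\gr}W(\mu,s)$, and since the $\ch_{\gr}W(\mu,s)$ are linearly independent, the filtration multiplicities are the unique coefficients expressing $\ch_{\gr}P(\lambda,r)$ in the global Weyl basis. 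On the other side, $[W_{\loc}(\mu,r):V(\lambda,s)] = \dim\Hom_{\lie g}(V(\lambda),W_{\loc}(\mu,r)[s])$ by \er{jhmult}. So (b) amounts to the identity
\[
\ch_{\gr}P(\lambda,r) \;=\; \sum_{(\mu,s)\in P^+\times\bz} \dim\Hom_{\lie g}\bigl(V(\lambda),W_{\loc}(\mu,r)[s]\bigr)\,\ch_{\gr}W(\mu,s),
\]
or more conveniently its Hilbert-series shadow obtained by applying $\mathbb H$ and specializing. This is exactly where Macdonald polynomials enter: by \cite{CL} and \cite{S} the character of $W_{\loc}(\mu,r)$ is a $t=0$ Macdonald polynomial specialization, and the combinatorial identities of Section 4 (the analogue of \lemref{crucial}) are precisely designed to verify this equation. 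So the proof of (b) is: translate both sides into the Macdonald-polynomial language, invoke the Section 4 identities, and conclude.

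The substantive work is (a), the existence of the filtration. Following the algebraic-groups idea attributed to \cite{Donkin}, I would construct a \emph{canonical filtration} of $P(\lambda,r)$ whose successive quotients are quotients of direct sums of global Weyl modules. Concretely, using the enumeration $\lambda_0,\lambda_1,\dots$ of $P^+$ fixed before the theorem, for each $k$ let $P^{\le k}$ be the maximal quotient of $P(\lambda,r)$ with weights in $\bigcup_{s=0}^k(\lambda_s-Q^+)$, giving a tower $P(\lambda,r)=P^{\le\infty}\twoheadrightarrow\cdots\twoheadrightarrow P^{\le k}\twoheadrightarrow P^{\le k-1}\twoheadrightarrow\cdots$ and dually a decreasing filtration of $P(\lambda,r)$ by the kernels $K_k=\ker(P(\lambda,r)\to P^{\le k})$. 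By the \lemref{} just proved in the excerpt, the quotient $K_{k-1}/K_k$ is a quotient of $P^{\le k}$ supported in the single "top layer" $\lambda_k-Q^+$ modulo lower layers; the weight-$\lambda_k$ vectors of this quotient are killed by $\lie n^+$, so by \propref{pdefrel} and the \lemref{} it is generated by $\lie n^+[t]$-highest weight vectors of weight $\lambda_k$ and hence is a quotient of a direct sum of copies of global Weyl modules $W(\lambda_k,\ast)$. The claim of the paper is that \emph{for $\lie{sl}_{n+1}$} these quotient maps are isomorphisms, i.e. no relations are imposed beyond those defining $W(\lambda_k,\ast)$. The mechanism to prove this is a dimension count: one shows
\[
\mathbb H\bigl(P(\lambda,r)\bigr) \;\ge\; \sum_{k\ge 0}\sum_{s}\, m_k(\lambda_k,s)\,\mathbb H\bigl(W(\lambda_k,s)\bigr)
\]
automatically from the surjections, while the reverse inequality follows once one knows the multiplicities $m_k$ forced by the layer structure are large enough — and this reverse inequality is exactly the content of the combinatorial identity of Section 4 (and Section 5). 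When the two Hilbert series agree, every surjection $\bigoplus W(\lambda_k,s)^{m_k}\twoheadrightarrow K_{k-1}/K_k$ must be an isomorphism, establishing the filtration.

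Thus the skeleton is: (1) set up the canonical layer filtration and identify the subquotients as quotients of sums of global Weyl modules (pure representation theory, using \propref{pdefrel} and the displayed \lemref{}); (2) compute, via the Macdonald-polynomial identities of Section 4, the Hilbert series of $P(\lambda,r)$ as the appropriate sum of Hilbert series of global Weyl modules with multiplicities $\dim\Hom_{\lie g}(V(\lambda),W_{\loc}(\mu,r)[s])$; (3) compare Hilbert series to force each subquotient surjection to be an isomorphism, yielding the global Weyl filtration; (4) read off the multiplicity statement from linear independence of $\{\ch_{\gr}W(\mu,s)\}$ in \propref{globweyl}(iii). The main obstacle — and the step the introduction flags as the reason for the $\lie{sl}_{n+1}$ hypothesis — is step (2): establishing the precise combinatorial identity (the analogue of \lemref{crucial}) relating the specialized Macdonald polynomial expansion of $\prod$-type generating functions to the character of $P(\lambda,r)$. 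Everything else is formal; that identity is where type $A$ is genuinely used, via the Demazure-module description of local Weyl modules and Sanderson's/Ion's character formula. I would therefore spend the bulk of the argument on that identity and treat steps (1), (3), (4) as short structural lemmas.
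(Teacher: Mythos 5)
Your proposal is essentially the paper's own proof: the canonical layer filtration with subquotients surjected onto by direct sums of global Weyl modules is Proposition \ref{red1} (Section 3), the Hilbert-series identity obtained from the $t=0$ Macdonald specialization and Lemma \ref{crucial} is Proposition \ref{red2} (Sections 4--5), and the comparison of Hilbert series forcing each surjection $\varphi_k$ to be an isomorphism, followed by reading off multiplicities from the linear independence of the $\ch_{\gr}W(\mu,s)$, is exactly the paper's concluding argument. One small slip in your write-up: the inequality that comes for free from the surjections $\bigoplus_s W(\lambda_k,s)^{m_k(s)}\twoheadrightarrow M_k/M_{k+1}$ is $\mathbb H(P(\lambda,r))\le\sum_{k,s}m_k(s)\,\mathbb H(W(\lambda_k,s))$, not $\ge$; the combinatorial identity then supplies the matching equality.
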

\begin{rem} We remark here that if $A$ is any graded commutative associative algebra, then  the Lie algebra $\lie g\otimes A$ and the category $\cal I$ can be  defined in the obvious way. The global and local Weyl modules and the projective modules have their analogs and  hence one could ask if Theorem \ref{bgg} remains true in this case. The graded characters of the local and global Weyl modules which play a crucial role in our paper are not known in this generality. However, the first step of the proof of the theorem which is Proposition \ref{red1} below does go through verbatim.\end{rem}

\subsection{}   The proof of the theorem is in two steps. The first step is the following.
\begin{prop}\label{red1} Let $\lie g$ be an arbitrary simple Lie algebra and let $M\in\Ob{\cal I}$ be such that $M[r]=0$ for all $r<<0$. There exists a decreasing filtration $$M=M_0 \supset M_1\supset \cdots,\qquad \bigcap_k M_k=\{0\},$$  and surjective morphisms $$\varphi_k: \bigoplus_{r\in\bz_+} W(\lambda_{k},r)^{m(k,r)}\longrightarrow M_k/M_{k+1}\to 0, \quad k\ge 0 $$ where $ m(k,r)=\dim\Hom_{\cal I }(M, W_{\loc}(-w_0\lambda_{k},-r)^*)$.
\end{prop}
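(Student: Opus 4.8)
The plan is to construct the filtration by a weight-by-weight procedure, processing the enumeration $\lambda_0,\lambda_1,\ldots$ in order, and at each stage stripping off the ``highest'' remaining global Weyl modules, in the spirit of Donkin's good filtrations for algebraic groups (as the introduction indicates). First I would set $M_0=M$ and, having defined $M_k$, look at the subspace $(M_k)^{\lie n^+[t]}$ of $\lie n^+[t]$-invariants of weight $\lambda_k$, collected over all graded degrees: $\bigoplus_{r}(M_k)[r]^{\lie n^+[t]}_{\lambda_k}$. By the universal property of the global Weyl module (its defining relations are exactly $\lie n^+[t]w=0$, $hw=\lambda(h)w$, $(x_i^-)^{\lambda(h_i)+1}w=0$), every vector in this space that also satisfies the $\lie{sl}_2$-integrability relation generates a quotient of some $W(\lambda_k,r)$; but in fact, after killing the contributions of $\lambda_0,\ldots,\lambda_{k-1}$, the relevant $\lie n^+[t]$-invariant vectors automatically satisfy the integrability relation because $\lambda_k+\alpha_i$ would have to be a weight of $M_k$ lying in $\cup_{s<k}\lambda_s-Q^+$, contradicting the enumeration hypothesis and maximality — this is precisely the argument already used in the Lemma preceding \thmref{bgg}. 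Thus I define $\varphi_k$ on $\bigoplus_r W(\lambda_k,r)^{m(k,r)}$ by sending the $j$-th generator in degree $r$ to the $j$-th basis vector of a chosen basis of $(M_k)[r]^{\lie n^+[t]}_{\lambda_k}$, and let $M_{k+1}=\ker(\text{image generates})$, i.e. $M_{k+1}$ is the submodule of $M_k$ on which all $\lie n^+[t]$-invariants of weight $\lambda_k$ have been removed — concretely $M_{k+1}=M_k\big/\langle\text{those invariants}\rangle$ should instead be read as: $M_k/M_{k+1}$ is the submodule of $M_k$ generated by $(M_k)^{\lie n^+[t]}_{\lambda_k}$, and $M_{k+1}$ its complement as a submodule. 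I will need to be a little careful here: the cleanest formulation is that $M_{k+1}$ is defined so that $M_k/M_{k+1}\cong \bigoplus_r W(\lambda_k,r)^{m(k,r)}$, so I should instead define $M_{k+1}$ as the kernel of the quotient map realizing this, which forces me to prove $M_k/M_{k+1}$ really is a direct sum of global Weyl modules and not merely a quotient — but the Proposition only asks for a surjection $\varphi_k$ onto $M_k/M_{k+1}$, so it suffices to define $M_{k+1}$ as (say) the intersection of kernels of all maps $M_k\to W(\mu,s)$ with $\mu\not\preceq\lambda_k$ in the chosen order, or more simply: $M_k/M_{k+1}$ is defined to be the image of the natural evaluation map from $\bigoplus_r W(\lambda_k,r)^{m(k,r)}$ and $M_{k+1}$ its kernel inside $M_k$.

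Here is the cleaner route I would actually write. Set $M_{k+1}$ to be the $\lie g[t]$-submodule of $M_k$ generated by all weight spaces $(M_k)_{\mu}$ with $\mu\notin \cup_{s=0}^{k}\lambda_s - Q^+$; equivalently, $(M_k)/M_{k+1}$ is the maximal quotient of $M_k$ whose weights lie in $\cup_{s=0}^k \lambda_s - Q^+$. Then $\bigcap_k M_k=\{0\}$ because $M$ has bounded-below grading and finite-dimensional graded pieces, so in each fixed degree only finitely many $\lambda_s$ contribute and eventually the weight constraint forces the graded piece to vanish. The quotient $N_k:=M_k/M_{k+1}$ has all weights in $\cup_{s\le k}\lambda_s-Q^+$, and its $\lambda_k$-weight vectors are $\lie n^+[t]$-highest-weight by the same argument as in the preceding Lemma. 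Therefore the evaluation map $\mathrm{ev}\colon\bigoplus_r W(\lambda_k,r)^{m(k,r)}\to N_k$ is defined once I choose a basis of $\bigoplus_r (N_k)[r]^{\lie n^+[t]}_{\lambda_k}$, which by \er{jhmult} and \propref{pdefrel} has dimension $\dim\Hom_{\lie g}(V(\lambda_k),\bigoplus_r N_k[r])$ in degree $r$. I then must (a) show this map is surjective and (b) identify the dimension with $m(k,r)=\dim\Hom_{\cal I}(M,W_{\loc}(-w_0\lambda_k,-r)^*)$.

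For surjectivity: the image of $\mathrm{ev}$ is a submodule of $N_k$ containing all $\lambda_k$-weight vectors; any proper submodule complement would, being an object of $\cal I$ with weights in $\cup_{s\le k}\lambda_s-Q^+$ and no $\lambda_k$-weight vectors, have weights actually in $\cup_{s<k}\lambda_s-Q^+$ — but a nonzero such quotient of $M_k$ would violate the maximality defining $M_{k+1}$ (it would mean $M_{k+1}$ was not large enough, since we could have quotiented further), except that it does NOT violate it because $M_{k+1}$ only removes weights outside $\cup_{s\le k}$. So the honest statement is: $N_k$ is generated over $\lie g[t]$ by its weight spaces $(N_k)_{\mu}$ with $\mu\in(\lambda_k - Q^+)\setminus(\cup_{s<k}\lambda_s-Q^+)$ together with lower ones, and among those the $\lie g$-highest-weight ones have weight exactly $\lambda_k$; an induction on the (finite, in each degree) weight poset shows $N_k$ is generated by $(N_k)_{\lambda_k}$, giving surjectivity of $\mathrm{ev}$. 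For the multiplicity identity: by \propref{pdefrel}, $\Hom_{\cal I}(P(\lambda_k,r),M)\cong\Hom_{\lie g}(V(\lambda_k),M[r])$, and one shows the maps $P(\lambda_k,r)\to M$ factor through $M_k$ modulo lower-weight junk, so $\dim(N_k)[r]^{\lie n^+[t]}_{\lambda_k}$ equals $\dim\Hom_{\cal I}(W(\lambda_k,r),M)$; then I invoke the known duality between global Weyl modules and (restricted duals of) local Weyl modules — $\Hom_{\cal I}(W(\lambda,r),M)\cong\Hom_{\cal I}(M,W_{\loc}(-w_0\lambda,-r)^*)$ — which follows from the universal property of $W(\lambda,r)$ together with $\dim W_{\loc}(\lambda,r)_\lambda=1$ and the contravariant duality $V\mapsto V^*$ on $\cal I$.

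The main obstacle I expect is the bookkeeping in step (a)/(b): precisely pinning down that the evaluation map is surjective (the poset induction on weights, handling the interplay between the $\lie g$-action and the $\lie g[t]$-action cleanly) and that the counting of generators really gives $\Hom_{\cal I}(W(\lambda_k,r),M)$ and hence $m(k,r)$ after applying the Weyl-module duality — in particular making sure no $\lambda_k$-weight vectors of $M_{k+1}$ have been left behind, so that $N_k$ is exactly generated in weight $\lambda_k$ and the filtration ``exhausts'' $M$ in the limit. The freeness/infinite-dimensionality of global Weyl modules (\propref{globweyl}(ii)) means these $N_k$ are genuinely infinite-dimensional in general, so I cannot argue by dimension count in a single degree alone and must track the whole graded structure; this is exactly the place the introduction flags as ``working harder than in the $\lie g$ case.''
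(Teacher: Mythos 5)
Your filtration is the paper's canonical filtration in disguise: the submodule $M_{k+1}$ you settle on (so that $M_k/M_{k+1}$ is the maximal quotient of $M_k$ with weights in $\bigcup_{s\le k}\lambda_s-Q^+$) coincides with $\sum_{s\ge k+1}\bu(\lie g[t])M_{\lambda_s}$, the vanishing of $\bigcap_k M_k$ is argued identically, and your observation that the $\lambda_k$-weight space of $N_k=M_k/M_{k+1}$ is automatically $\lie n^+[t]$-invariant and integrable (so that the projective cover of ${\rm head}(N_k)$ factors through a direct sum of global Weyl modules) is exactly the paper's route; with the paper's description of $M_k$ as generated by the spaces $M_{\lambda_s}$, $s\ge k$, the generation of $N_k$ by $(N_k)_{\lambda_k}$ is immediate, so your poset induction for surjectivity, while hand-wavy as written, is not the real issue.

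The genuine gap is in your step (b), the identification of the multiplicity. You propose first that $\dim (N_k)[r]_{\lambda_k}=\dim\Hom_{\cal I}(W(\lambda_k,r),M)$ and then that $\Hom_{\cal I}(W(\lambda,r),M)\cong\Hom_{\cal I}(M,W_{\loc}(-w_0\lambda,-r)^*)$. Both fail. The functor $\Hom_{\cal I}(W(\lambda_k,r),-)$ applied to $M$ counts vectors of $M[r]_{\lambda_k}$ that are $\lie n^+[t]$-invariant \emph{in $M$ itself}, whereas $m(k,r)$ counts vectors that become highest weight only after passing to the subquotient $N_k$; and there is no adjunction exchanging a Hom into $M$ from the global Weyl module with a Hom out of $M$ into the dual of a local Weyl module. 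Concretely, take $M=P(0,0)$ and $(\lambda_k,r)=(\lambda_0,0)=(0,0)$: here $M_0/M_1\cong W(0,0)=\bc$ (the maximal quotient with weights in $-Q^+$), so the multiplicity is $1$, and indeed $\dim\Hom_{\cal I}(P(0,0),W_{\loc}(0,0)^*)=\dim\Hom_{\lie g}(V(0),\bc)=1$; but $\Hom_{\cal I}(W(0,0),P(0,0))=\Hom_{\cal I}(\bc,P(0,0))=0$, because $(x\otimes t)p_{0,0}\ne 0$, so $P(0,0)$ has no nonzero $\lie g[t]$-invariant vector in degree $0$. What your argument is missing is precisely the paper's key Lemma, the isomorphism $\Hom_{\lie g}(M_k/M_{k+1},V(\lambda_k,r))\cong\Hom_{\cal I}(M,W_{\loc}(-w_0\lambda_k,-r)^*)$, whose proof goes through the injective objects $I(\lambda_k,r)=P(-w_0\lambda_k,-r)^*$ and the characterization of $W_{\loc}(-w_0\lambda_k,-r)^*$ as the maximal submodule of $I(\lambda_k,r)$ whose weights lie in $\bigcup_{s\le k}\lambda_s-Q^+$ and whose $\lambda_k$-weight vectors sit only in degree $r$: a $\lie g$-map $N_k\to V(\lambda_k,r)$ extends along $N_k\hookrightarrow M/M_{k+1}$ into the injective and is forced into that maximal submodule, while conversely any map $M\to W_{\loc}(-w_0\lambda_k,-r)^*$ kills $M_{k+1}$ and meets the socle $V(\lambda_k,r)$. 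Without this dual/injective step the computation of $m(k,r)$ does not go through.
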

The proposition will be proved in the next section.

\subsection{}  The second step in the proof of the theorem is the following.
\begin{prop} \label{red2} Assume that $\lie g$ is of type $\lie{sl}_{n+1}$.  We have,
\begin{gather}\mathbb H(P(\lambda,0))=\sum_{k\ge 0}\sum_{ r\in\bz_+}[W_{\loc}(\lambda_{k},0): V(\lambda,r)]\mathbb H(W(\lambda_{k},r))\\=\sum_{k\ge 0}\left(\sum_{r\ge 0}[W_{\loc}(\lambda_{k},0): V(\lambda,r)]u^r\right)\mathbb H(W(\lambda_{k},0).\end{gather}
\end{prop}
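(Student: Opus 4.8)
The plan is to reduce the two claimed identities to a purely combinatorial statement about Hilbert series, and then feed in the character formulas coming from Macdonald polynomials established in Section 4. First I would observe that the second equality is formal: by definition $\mathbb H(W(\lambda_k,r)) = u^{\,r-r'}\mathbb H(W(\lambda_k,r'))$ for any $r,r'\in\bz$ (grade shift multiplies the Hilbert series by a power of $u$), so collecting the terms with a fixed $k$ and factoring out $\mathbb H(W(\lambda_k,0))$ turns the first sum into the second. Thus the entire content is the first identity, and moreover, since $\mathbb H$ is invariant under grade shift only up to a power of $u$, it suffices to prove it for $r=0$; the general statement $\mathbb H(P(\lambda,r))=\sum_{k,s}[W_{\loc}(\lambda_k,r):V(\lambda,s)]\,\mathbb H(W(\lambda_k,s))$ then follows by applying $\tau_r$ to everything.

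Next I would set up the bookkeeping. From \eqref{proj} and the PBW theorem, $P(\lambda,0)\cong \bu(\lie n^-[t]\oplus\lie h[t]\oplus t\lie n^+[t]\oplus t\lie h[t])\otimes V(\lambda)$ — more simply, $P(\lambda,0)\cong \bu(\lie g[t]_+)\otimes V(\lambda)$ where $\lie g[t]_+=\lie g\otimes t\bc[t]$, so $\mathbb H(P(\lambda,0)) = \mathbb H(\bu(\lie g[t]_+))\cdot\dim V(\lambda)$, with $\mathbb H(\bu(\lie g[t]_+)) = \prod_{i\ge 1}(1-u^i)^{-\dim\lie g}$. On the other side, I would recall that for $\lie{sl}_{n+1}$ the graded character of the \emph{local} Weyl module $W_{\loc}(\mu,0)$ is a specialization at $t=0$ of a Macdonald polynomial (via \cite{CL}, \cite{S}), and that the graded character of the \emph{global} Weyl module $W(\mu,0)$ factors as $\ch_{\gr}W(\mu,0) = \ch_{\gr}W_{\loc}(\mu,0)\cdot(\text{Hilbert series of the commutative algebra }\bold A_\mu)$, where $\bold A_\mu\cong\bc[t_1,\dots]$-type polynomial ring is $\End$ of the global Weyl module (this is the "freeness for a smaller algebra" alluded to in the introduction); concretely $\mathbb H(W(\mu,0)) = \mathbb H(W_{\loc}(\mu,0))\cdot\prod_{i}(1-u^i)^{-(\text{suitable multiplicities depending on }\mu)}$. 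The identity to be proved is then an equality in $\bz[[u]]$ obtained by taking $\dim_\bc$ of each graded piece; using \eqref{jhmult} one rewrites $[W_{\loc}(\lambda_k,0):V(\lambda,r)] = \dim\Hom_{\lie g}(V(\lambda),W_{\loc}(\lambda_k,0)[r])$, so the right-hand side is $\sum_k \langle \ch_{\lie g}W_{\loc}(\lambda_k,0)\,,\,\ch_{\lie g}V(\lambda)\rangle_{\!u}\cdot\mathbb H(W(\lambda_k,0))$ where $\langle\cdot,\cdot\rangle$ picks off $V(\lambda)$-isotypic components with their $u$-grading.

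The decisive input is the combinatorial identity of Section 4 — this is \lemref{crucial}, the "missing piece" the introduction names for general $\lie g$. I would invoke it in the form: the generating function $\sum_{\mu\in P^+} m_\lambda(\mu)\,\ch_{\gr}W(\mu,0)$, where $m_\lambda(\mu)$ records the graded multiplicity of $V(\lambda)$ in $W_{\loc}(\mu,0)$, equals $\ch_{\lie g}V(\lambda)\cdot\prod_{i\ge1}(1-u^i)^{-\dim\lie g}$, i.e. the graded character of $P(\lambda,0)$. Taking Hilbert series (evaluating $e(\nu)\mapsto 1$) gives exactly the first displayed equation. Concretely the argument is: expand the Macdonald-polynomial character formulas for the $W_{\loc}(\lambda_k,0)$, substitute into the RHS, and use the known $t=0$ Macdonald identities (orthogonality / the Pieri-type or Cauchy-type identity used in Section 4) to collapse the sum to the product formula for $\mathbb H(\bu(\lie g[t]_+))$ times $\dim V(\lambda)$. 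I expect the main obstacle to be precisely this collapsing step: matching the Weyl-module degree shifts $\mathbb H(W(\lambda_k,0))$ against the denominators appearing in the specialized Macdonald polynomials requires the exact normalization in \lemref{crucial}, and getting the indexing of the sum over $k$ (via the fixed enumeration of $P^+$ with $\lambda_r-\lambda_s\in Q^+\Rightarrow r\ge s$) to line up with the triangularity of the Macdonald basis is where the $\lie{sl}_{n+1}$ hypothesis is genuinely used. Once the Hilbert-series identity is in hand, Proposition \ref{red2} follows immediately; combined with \propref{red1}, which already produces the filtration with the surjections $\varphi_k$, the equality of Hilbert series forces each $\varphi_k$ to be an isomorphism, yielding \thmref{bgg}.
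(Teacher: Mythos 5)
Your proposal follows essentially the same route as the paper: the PBW computation $\mathbb H(P(\lambda,0))=\dim V(\lambda)/(q;q)_\infty^{\dim\lie g}$, the factorization $\mathbb H(W(\mu,0))=\mathbb H(W_{\loc}(\mu,0))/\prod_i(q;q)_{\mu_i}$, the identification of $\ch_{\gr}W_{\loc}(\mu,0)$ with the $t=0$ Macdonald specialization, and the collapse of the resulting sum via Lemma \ref{crucial} (itself a consequence of the $t=0$ Cauchy identity and Proposition \ref{lhs}). The only cosmetic difference is that you state the collapsing identity at the level of graded characters while the paper's Lemma \ref{crucial} is already the specialized ($x_i=1$) Hilbert--series version, which is all that is needed.
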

This proposition is proved in the last two sections of this paper.

 \subsection{} Observe that we can apply Proposition \ref{red1} to $P(\lambda,r)$.  Using the following equalities which follow from Proposition \ref{proj2} and standard properties of duals and grade shift operators, we get
\begin{gather*}m(k,r)= \dim\Hom_{\cal I}(P(\lambda,0), W_{\loc}(-w_0\lambda_k,-r)^*) =\dim\Hom_{\lie g}(V(\lambda),  W_{\loc}(-w_0\lambda_k,-r)^*[0])\\ =\dim\Hom_{\lie g}(  W_{\loc}(-w_0\lambda_k,-r)[0],V(-w_0\lambda))=\dim\Hom_{\lie g}(  W_{\loc}(\lambda_k,0)[r],V(\lambda))\\ =
[W_{\loc}(\lambda_k,0): V(\lambda,r)].\end{gather*}
 Hence for $\ell\ge 0$, we have $$\dim P(\lambda,0)[\ell ]\le\sum_{k,r\ge 0}[W_{\loc}(\lambda_{k},0): V(\lambda,r)]\dim W(\lambda_{k},r)[\ell].$$ Proposition \ref{red2} implies that for $\lie{sl}_{n+1}$,  equality holds and hence the surjective maps $\varphi_k$ are actually isomorphisms for all $k\ge 0$. This proves Theorem \ref{bgg}.

 \subsection{} In the last section, we  also establish the analog of Theorem \ref{bgg} in certain subcategories of $\cal I$.  Given $k\ge 0$, let $\cal I_{>}^k$ be the full subcategory of $\cal I_{>}$ consisting of objects $M$ such that $$\wt M\subset\bigcup_{s=0}^k\lambda_s-Q^+.$$ The modules $V(\lambda_s,r)$, $W_{\loc}(\lambda_s,r)$ and $W(\lambda_s,r)$ are objects of  $\cal I_{>}^k$ for all $s\le k$ and $r\in\bz$. Let $P^k(\lambda_s,r)$ be the maximal quotient of $P(\lambda_s,r)$ which lies in $\cal I_{>}^k$. Then $P^k(\lambda_s.r)$ is the projective cover of $V(\lambda_s,r)$ in $\cal I_{>}^k$.
 \begin{thm}\label{bggtr} Assume that $\lie g$ is of type $\lie{sl}_{n+1}$. Let $s,k\in\bz_+$ with $s\le k$. The object $P^k(\lambda_s,r)$ has a finite filtration by global Weyl modules, and  $$[P^k(\lambda_s,r): W(\lambda_\ell, p)]= [W_{\loc}(\lambda_\ell,r): V(\lambda_s,p)].$$
 \end{thm}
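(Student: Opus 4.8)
The plan is to deduce Theorem~\ref{bggtr} from Theorem~\ref{bgg} by a truncation argument, exactly parallel to how one passes from category $\cal O$ to a truncated (parabolic or highest-weight) subcategory. First I would record the basic structure of the truncation functor: since $\cal I^k_{>}$ is the full subcategory of objects whose weights lie in $\bigcup_{s=0}^k\lambda_s-Q^+$, the assignment $M\mapsto M^k$, where $M^k$ is the maximal quotient of $M$ lying in $\cal I^k_{>}$ (equivalently $M^k = M/\sum_{\ell>k}\bu(\lie g[t])M_{\lambda_\ell}$), is right exact and sends projectives to projectives; this is why $P^k(\lambda_s,r)$ is the projective cover of $V(\lambda_s,r)$ in $\cal I^k_{>}$, as already asserted in the excerpt. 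The key compatibility I need is with global Weyl modules: by the Lemma preceding Theorem~\ref{bgg}, $W(\lambda_s,r)$ is \emph{already} the maximal quotient of $P(\lambda_s,r)$ with weights in $\bigcup_{p=0}^s\lambda_p-Q^+$, so for $s\le k$ we have $W(\lambda_s,r)^k\cong W(\lambda_s,r)$, i.e. the global Weyl modules with $s\le k$ lie in $\cal I^k_{>}$ and are unchanged by truncation.

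Next I would apply the truncation functor to the global Weyl filtration of $P(\lambda_s,r)$ produced by Theorem~\ref{bgg}. Write $P(\lambda_s,r)=M_0\supset M_1\supset\cdots$ with $M_j/M_{j+1}\cong\bigoplus_{(\mu,p)}W(\mu,p)^{m_j(\mu,p)}$. Because truncation is right exact, applying $(-)^k$ gives a filtration of $P(\lambda_s,r)^k=P^k(\lambda_s,r)$ whose subquotients are the truncations $(M_j/M_{j+1})^k=\bigoplus_{(\mu,p)}(W(\mu,p)^k)^{m_j(\mu,p)}$. Now I split the sum: for $\mu=\lambda_\ell$ with $\ell\le k$ the summand $W(\lambda_\ell,p)^k=W(\lambda_\ell,p)$ survives intact, while for $\mu=\lambda_\ell$ with $\ell>k$ I claim $W(\lambda_\ell,p)^k=0$. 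Indeed $W(\lambda_\ell,p)$ is generated by its top weight vector of weight $\lambda_\ell$, which is killed in any quotient lying in $\cal I^k_{>}$ since $\lambda_\ell\notin\bigcup_{s=0}^k\lambda_s-Q^+$ (here I use the enumeration property $\lambda_\ell-\lambda_s\in Q^+\Rightarrow \ell\ge s$, so $\lambda_\ell\le\lambda_s$ forces $\ell\le s\le k$, contradicting $\ell>k$); hence $W(\lambda_\ell,p)^k=0$. Therefore the induced filtration of $P^k(\lambda_s,r)$ is a genuine filtration by global Weyl modules $W(\lambda_\ell,p)$ with $\ell\le k$, and $[P^k(\lambda_s,r):W(\lambda_\ell,p)]$ equals the corresponding multiplicity in the full filtration of $P(\lambda_s,r)$, which by Theorem~\ref{bgg} is $[W_{\loc}(\lambda_\ell,r):V(\lambda_s,p)]$. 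Finiteness of the filtration follows since $P^k(\lambda_s,r)[\ell]$ is finite dimensional for each $\ell$ and only finitely many $W(\lambda_\ell,p)$ contribute in each degree; one also checks directly from $\mathbb H$-series that $\bigcap_j M_j^k=0$.

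I anticipate the only real subtlety to be verifying that truncation is exact \emph{on the Weyl-filtered part}, i.e. that applying $(-)^k$ to the short exact sequences $0\to M_{j+1}\to M_j\to M_j/M_{j+1}\to 0$ does not introduce error terms — a priori right exactness only gives $M_{j+1}^k\to M_j^k\to (M_j/M_{j+1})^k\to 0$ with possibly non-injective left map. The clean way around this is to argue at the level of Hilbert series: $\mathbb H(P^k(\lambda_s,r))$ on one hand is computed by summing $\mathbb H(M_j^k/M_{j+1}^k)$, and on the other hand the weight-truncation is an exact operation on the underlying graded vector spaces (it just deletes the graded weight spaces $M[\cdot]_\mu$ for $\mu$ not of the required form, once one knows — from $\wt W(\lambda_\ell,p)=\wt V(\lambda_\ell,p)\subset\lambda_\ell-Q^+$ — that each $W(\lambda_\ell,p)$ with $\ell\le k$ already has all weights inside $\bigcup_{s=0}^k\lambda_s-Q^+$, so nothing is deleted from the surviving summands). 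Comparing, the only contributions come from the $\ell\le k$ Weyl modules with exactly the stated multiplicities, which simultaneously forces the connecting maps to be injective and the filtration to be a bona fide global Weyl filtration. Since $P^k(\lambda_s,r)$ has $P^k(\lambda_s,r)[p]=0$ for $p<r$ and finite-dimensional graded pieces, the filtration terminates after finitely many steps, giving the finite filtration claimed.
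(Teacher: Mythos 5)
Your proposal is correct in outline and reaches the right conclusion, but it takes a more roundabout route than the paper and leans on one imprecise claim along the way. The paper's proof is essentially two lines, and it works because the global Weyl filtration of $P(\lambda_s,r)$ produced in the proof of Theorem \ref{bgg} is not an arbitrary filtration: it is the \emph{canonical} filtration $M_\ell=\sum_{p\ge \ell}\bu(\lie g[t])M_{\lambda_p}$ (the content of Theorem \ref{bgg} is precisely that the surjections $\varphi_\ell$ onto its layers are isomorphisms). With that observation, the kernel of the truncation $P(\lambda_s,r)\to P^k(\lambda_s,r)$ is \emph{literally} the term $M_{k+1}$ of the filtration, so $P^k(\lambda_s,r)\cong M/M_{k+1}$ carries the finite induced filtration $M_0/M_{k+1}\supset\cdots\supset M_k/M_{k+1}\supset 0$ with subquotients $M_\ell/M_{\ell+1}$, $0\le\ell\le k$ --- identical to the first $k+1$ layers of the Weyl filtration of $P(\lambda_s,r)$. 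No right-exactness analysis, no splitting of summands, and no Hilbert series comparison is needed; the multiplicities are inherited on the nose.

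Your workaround for the exactness issue is where I would push back. You assert that weight-truncation ``is an exact operation on the underlying graded vector spaces (it just deletes the graded weight spaces $M[\cdot]_\mu$)''; this is false as stated, since $M^k=M/\sum_{\ell>k}\bu(\lie g[t])M_{\lambda_\ell}$ and the submodule being killed contains many vectors whose weights \emph{do} lie in $\bigcup_{s=0}^k\lambda_s-Q^+$, so truncation is not weight-space deletion. The conclusion you draw survives only because, for the canonical filtration, the submodule being killed coincides with $M_{k+1}$ --- which is exactly the observation that makes the whole functorial apparatus unnecessary. The parts of your argument that are cleanly correct and match the paper's setup are the identifications $W(\lambda_\ell,p)^k\cong W(\lambda_\ell,p)$ for $\ell\le k$ (since $\wt W(\lambda_\ell,p)\subset\lambda_\ell-Q^+$) and $W(\lambda_\ell,p)^k=0$ for $\ell>k$ (the cyclic generator of weight $\lambda_\ell$ dies); I would keep those and replace the right-exactness discussion with the single sentence identifying the truncation kernel with $M_{k+1}$.
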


\section{The Canonical filtration} Let $\cal I_{>}$ be the full subcategory of $\cal I$ consisting of objects
$M$ whose grades are bounded below, i.e., there exists $r\in\bz$ (depending on $M$) with $M[p]=0$ for all $p< r$. It follows from Section \ref{proj} that $P(\lambda,r)\in\Ob\cal I_{>}$ for all $\lambda\in P^+$.

\subsection{} We begin this section with the following proposition which summarizes the properties of the  duals of the projective, global and local Weyl modules.
\begin{prop}\label{dual} For $(\lambda_k,r)\in P^+\times\bz$, set $$I(\lambda_k,r)= P(-w_0\lambda_k,-r)^*.$$
\begin{enumerit}
\item[(i)] $I(\lambda_k,r)$ is an injective object of $\cal I$  with a unique irreducible submodule which is isomorphic to $V(\lambda_k,r)$.
    \item[(ii)] The maximal submodule  of $I(\lambda,r)$ whose weights are in the union of cones $\lambda_s-Q^+$, $0\le s\le k$ is isomorphic to $W(-w_0\lambda,-r)^*$.
        \item[(iii)] $W_{\loc}(-w_0\lambda_k,-r)^*$ is isomorphic to the maximal submodule $M$  of $I(\lambda,r)$ satisfying $$\wt M\subset\bigcup_{s=0}^r\lambda_s-Q^+,\qquad M[s]_{\lambda_k}\ne 0\implies s=r.$$
            \end{enumerit}\hfill\qedsymbol
\end{prop}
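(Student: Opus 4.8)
The three statements are dual to facts about the projective, global Weyl, and local Weyl modules established earlier, so the strategy is to dualize. First I would recall the general principle: for $V\in\Ob\cal I_{>}$, taking restricted duals is an exact contravariant functor on $\cal I$ that sends projectives to injectives, reverses the partial order on weights in the sense that $\wt V^*=-\wt V$, and interchanges \lq\lq maximal quotient with a given property on weights\rq\rq\ with \lq\lq maximal submodule with the dual property.\rq\rq\ Since $P(-w_0\lambda_k,-r)$ is projective with simple head $V(-w_0\lambda_k,-r)$, its dual $I(\lambda_k,r)$ is injective with simple socle $V(-w_0\lambda_k,-r)^*\cong V(\lambda_k,r)$ by Lemma~\ref{irred}; that gives (i). For injectivity I would invoke the standard fact that $\cal I$ has enough injectives given by duals of projectives, or more directly that $\Hom_{\cal I}(-,I(\lambda_k,r))\cong\Hom_{\cal I}(P(-w_0\lambda_k,-r),(-)^*)$ is exact because $P(-w_0\lambda_k,-r)$ is projective and $(-)^*$ is exact.

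For (ii), I would use the Lemma immediately preceding Proposition~\ref{red1}: $W(-w_0\lambda_k,-r)$ is the maximal quotient of $P(-w_0\lambda_k,-r)$ whose weights lie in $\cup_{s=0}^{k}\lambda_s-Q^+$ — here one must be slightly careful, since the enumeration hypothesis $\lambda_r-\lambda_s\in Q^+\Rightarrow r\ge s$ means $-w_0\lambda_k$ need not be $\lambda_k$, but $-w_0$ permutes $P^+$ and preserves $Q^+$, so the relevant cone for the dual statement is still $\cup_{s=0}^k\lambda_s-Q^+$ after applying $-w_0$; I would spell this out. Dualizing the surjection $P(-w_0\lambda_k,-r)\twoheadrightarrow W(-w_0\lambda_k,-r)$ gives an injection $W(-w_0\lambda_k,-r)^*\hookrightarrow I(\lambda_k,r)$, and because duality turns the \lq\lq maximal quotient supported on a cone\rq\rq\ into the \lq\lq maximal submodule supported on the negated cone,\rq\rq\ and $-(\cup_{s=0}^k -w_0\lambda_s-Q^+)=\cup_{s=0}^k w_0\lambda_s+Q^+$ which under relabeling is exactly the ascending condition, this submodule is the maximal one with $\wt M\subset\cup_{s=0}^k\lambda_s-Q^+$. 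I would verify the cone bookkeeping carefully since this is the one genuinely fiddly point.

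For (iii), I would do the same with the local Weyl module in place of the global one. Combining Proposition~\ref{weylloc} and equation~\er{jhmult}, $W_{\loc}(-w_0\lambda_k,-r)$ is the maximal finite-dimensional quotient of $W(-w_0\lambda_k,-r)$; concretely it is the quotient of $P(-w_0\lambda_k,-r)$ by the relations forcing $\lie h[t]$ to act through the degree-zero part, equivalently the maximal quotient $Q$ with $\wt Q\subset\cup_{s}(-w_0\lambda_k)_{\le}$ \emph{and} $Q[s]_{-w_0\lambda_k}\ne0\Rightarrow s=-r$ (the highest weight line sits only in the bottom degree, by the second part of Proposition~\ref{weylloc}). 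Dualizing term by term and translating the two conditions under $V\mapsto V^*$ — the weight condition flips the cone as in (ii), and $Q[s]_{-w_0\lambda_k}\ne 0\Leftrightarrow Q^*[-s]_{\lambda_k}\ne 0$ turns $s=-r$ into the condition $M[s]_{\lambda_k}\ne0\Rightarrow s=r$ — yields precisely the characterization in (iii).

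\textbf{Main obstacle.} The only real work is (ii) and (iii), and within them the combinatorial check that applying $-w_0$ together with restricted duality sends the descending cone condition $\cup_{s=0}^k\lambda_s-Q^+$ to itself and the grading condition to its stated mirror; once that normalization is pinned down, each statement is a one-line dualization of a result already in the paper. I expect no deep difficulty, just the need to be scrupulous about the $-w_0$ twist and the sign on the grading variable, so I would present (i) quickly and devote most of the written proof to making the dualization in (ii)--(iii) airtight.
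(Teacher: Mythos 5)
The paper offers no proof of this proposition --- it is stated with a \qedsymbol\ as a summary of duality facts --- so there is nothing to compare line by line; your dualization plan is the intended argument and is essentially sound. Part (i) is fine: the socle of $P(-w_0\lambda_k,-r)^*$ is the dual of the simple head $V(-w_0\lambda_k,-r)$, which by Lemma (irred) is $V(\lambda_k,r)$, and injectivity follows from exactness of $M\mapsto\Hom_{\cal I}(P(-w_0\lambda_k,-r),M^*)$. You correctly isolate the only delicate point in (ii)--(iii): the enumeration $\lambda_0,\lambda_1,\dots$ need not be stable under $-w_0$, so dualizing the lemma preceding Proposition (red1) literally produces the maximal quotient of $P(-w_0\lambda_k,-r)$ whose $\lie g$-constituents lie in $\{-w_0\lambda_s: s\le k\}$ rather than in $\{\lambda_s: s\le k'\}$ (where $\lambda_{k'}=-w_0\lambda_k$). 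To close this you should observe that the lemma's proof characterizes $W(\mu,r)$ as the maximal quotient of $P(\mu,r)$ whose weights lie in $\bigcup_{\nu\in S}\nu-Q^+$ for \emph{any} finite downward-closed $S\subset P^+$ containing $\mu$ and containing no element strictly above $\mu$; both sets above qualify (downward closure and maximality of $-w_0\lambda_k$ follow since $-w_0$ preserves the dominance order and the enumeration hypothesis), which is exactly the spelling-out you promise. For (iii) your translation of the extra grading condition is correct: the maximal quotient of $W(\mu,-r)$ whose $\mu$-weight space is concentrated in degree $-r$ kills $(\lie h\otimes t^a)w_{\mu,-r}$ for $a>0$ and hence is $W_{\loc}(\mu,-r)$; note also that the paper's $\bigcup_{s=0}^{r}$ in (iii) is a typo for $\bigcup_{s=0}^{k}$, which your reading silently and correctly repairs.
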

\noindent By abuse of notation we shall freely identify $V(-w_0\lambda_k,-r)$  with  its isomorphic copy in $W_{\loc}(\lambda_k,r)^*$, similar remarks apply for the corresponding submodules of $I(\lambda,r)$.

 \subsection{} Given $M\in\Ob\cal I_{>}$, let $$M_k =\sum_{s\ge k}\bu(\lie g[t]M_{\lambda_s},\qquad s,k\in\bz_+.$$ Clearly $M_0=M$,  $M_k\in\Ob\cal I_{>}$ for all $k\ge 0$ and \begin{gather}\label{onek} M_k/M_{k+1}=\bu(\lie g[t])(M_k/M_{k+1})_{\lambda_k},\\ \nonumber \\ \label{2k} \lie n^+[t] \left(M_k/M_{k+1}\right)_{\lambda_k}=0,\qquad \ (h-\lambda_k(h))\left(M_k/M_{k+1}\right)_{\lambda_k}=0,\ \ h\in\lie h.\end{gather}
 We claim that $$\bigcap_{k\in\bz_+}M_k=\{0\},$$ and call  $M=M_0\supset M_1\cdots$  the {\em canonical filtration}  of $M\in\Ob{\cal I}_>$.

 \vskip12pt

\iffalse For the claim, observe that if $\wt M\cap P^+$ is finite, then $M_k=\{0\}$ for some $k$ and there is nothing to prove. Otherwise, we have $M_{\lambda_k}\ne 0$ for infinitely many $k\in\bz_+$.  Suppose that $ M[r]\ne 0$ for some $r\in\bz_+$.\fi For the claim, note that  since $M[s]=0$ for all $s<<0$, we have $$|\left(\bigcup_{p\le r}\wt M[p]\right) \bigcap P^+|<\infty.$$ Hence, we can
 choose  $k_0\in\bz_+$ such that \iffalse $$ \  \bigcup_{p\le r}\wt M[p]\subset\bigcup_{s=0}^{k_0-1}\lambda_s-Q^+,\qquad {\rm{i.e.}}\ \ M_{\lambda_s}[p]=0,\ \ {\rm{if }}\ s\ge k_0\ {\rm{and}} \  p\le r.$$ In other words, we have \fi $$M_{\lambda_s}\subset\bigoplus_{p>r}M[p],\ \ s\ge k_0.$$  Using the definition of $M_{{k_0}}$ we now get that $M_{k_0}[r]=\{0\}$  which establishes the claim.

\subsection{} \begin{lem} For $M\in\Ob{\cal I}_>$, and $k,r\in\bz_+$, we have an isomorphism of vector spaces,\begin{equation}\label{chineq}\Hom_{\lie g}(M_k/M_{k+1}, V(\lambda_k,r))\cong \Hom_{\cal I}( M, W_{\loc}(-w_0\lambda_k, -r)^*).\end{equation}
\end{lem}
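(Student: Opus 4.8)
The plan is to establish the isomorphism \er{chineq} by a chain of adjunctions, using the duality results of Proposition~\ref{dual} together with the characterizations of $M_k/M_{k+1}$ recorded in \er{onek} and \er{2k}. First I would note that by \er{onek} and \er{2k}, the quotient $M_k/M_{k+1}$ is generated by its $\lambda_k$-weight space, on which $\lie n^+[t]$ acts by zero and $\lie h$ acts by the scalar $\lambda_k$; moreover $\wt(M_k/M_{k+1})\subset\bigcup_{s\ge k}\lambda_s-Q^+$ by construction of the canonical filtration (the weights $\lambda_s$ with $s<k$ have been killed). Dualizing, $(M_k/M_{k+1})^*$ is a submodule of $M_k^*$, hence of $M^*$ is not quite right --- rather I should work contravariantly throughout and instead realize both sides as spaces of maps out of $M$.

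\medskip

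Concretely, the strategy is: a $\lie g$-module map $M_k/M_{k+1}\to V(\lambda_k,r)$ is the same as a $\lie g$-module map $M_k/M_{k+1}\to V(\lambda_k)$ landing in degree $r$, which by \er{jhmult}-type reasoning is the same as picking out a suitable highest weight vector; equivalently, dualizing, it is a $\lie g[t]$-map from $V(-w_0\lambda_k,-r)$ into $(M_k/M_{k+1})^*$. Now $(M_k/M_{k+1})^*$ sits inside $M^*$ as the subquotient dual, but in fact, because the $M_k$ form the canonical filtration, I claim $(M_k/M_{k+1})^*$ can be identified with a \emph{sub}module of $M^*$: namely the largest submodule of $M^*$ whose weights lie in $\bigcup_{s\ge k}\lambda_s - Q^+$ modulo the largest one with weights in $\bigcup_{s\ge k+1}\lambda_s-Q^+$. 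The key point to extract is that any $\lie g[t]$-map $P(-w_0\lambda_k,-r)\to M^*$ automatically factors through the submodule corresponding to $M_k^*$ (because its image is generated in weight $-w_0\lambda_k$, which forces all weights into $\bigcup_{s\ge k}\lambda_s-Q^+$ --- here one uses the enumeration convention $\lambda_r-\lambda_s\in Q^+\Rightarrow r\ge s$), and then the image of such a map lands in the sub corresponding to $M_k/M_{k+1}$ precisely when the additional relation $\lie n^+[t]p=0$ holds, i.e.\ precisely when the map factors through $W(-w_0\lambda_k,-r)$, and further lands in the right single graded piece exactly when it factors through $W_{\loc}(-w_0\lambda_k,-r)$, by Proposition~\ref{dual}(iii). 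Assembling these identifications and using Proposition~\ref{pdefrel} ($\Hom_{\cal I}(P(\lambda,r),N)\cong\Hom_{\lie g}(V(\lambda),N[r])$) together with $(V^*)^*\cong V$ gives the stated isomorphism.

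\medskip

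In more streamlined form: I would prove $\Hom_{\lie g}(M_k/M_{k+1},V(\lambda_k,r))\cong\Hom_{\cal I}(W_{\loc}(-w_0\lambda_k,-r),M^*)$ and then dualize the second argument using that $W_{\loc}(-w_0\lambda_k,-r)^*$ is a submodule of the injective $I(\lambda_k,r)$ and $\Hom_{\cal I}(N, M^*)\cong\Hom_{\cal I}(M,N^*)$ for $N$ finite-dimensional (apply restricted duality, $(\,\cdot\,)^*$ being an exact contravariant involution on $\cal I$). The cleanest route is probably: (a) show $\Hom_{\cal I}(M,W_{\loc}(-w_0\lambda_k,-r)^*)\cong\Hom_{\cal I}(W_{\loc}(-w_0\lambda_k,-r),M^*)$ by duality; (b) show a $\lie g[t]$-map $W_{\loc}(-w_0\lambda_k,-r)\to M^*$ is determined by, and freely determined by, the image of its cyclic generator, which must be an $\lie n^+[t]$-invariant vector of weight $-w_0\lambda_k$ in degree $-r$ of $M^*$ satisfying the $(x_i^-)^{\lambda_k(h_i)+1}$-relation; (c) identify that space of vectors in $M^*$ with $\Hom_{\lie g}(V(-w_0\lambda_k), (M_k/M_{k+1})^*[-r]) \cong \Hom_{\lie g}(M_k/M_{k+1}, V(\lambda_k,r))$, where the crucial input is that an $\lie n^+[t]$-highest weight vector of weight $-w_0\lambda_k$ in $M^*$ necessarily lies in the submodule dual to $M_k/M_{k+1}$ --- this is where \er{onek}, \er{2k} and the ordering of $P^+$ are used.

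\medskip

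The main obstacle I anticipate is step (c): precisely pinning down, at the level of the ambient module $M^*$ rather than the subquotient, that the relevant space of highest-weight vectors is exactly $(M_k/M_{k+1})^*$-worth and not more. One has to be careful that $M_k^*$ is a quotient of $M^*$ while $(M/M_k)^*$ is the submodule, so the "submodule of $M^*$ with weights in $\bigcup_{s\ge k}\lambda_s-Q^+$" needs to be matched correctly with the canonical filtration via duality --- Proposition~\ref{dual}(ii)--(iii) is the model for this, applied now to $M^*$ in place of $I(\lambda,r)$. Once the bookkeeping of which cone of weights corresponds to which term of the filtration is set up correctly (using $M[s]=0$ for $s\ll 0$ to guarantee finiteness and the termination $\bigcap_k M_k=0$), the rest is formal.
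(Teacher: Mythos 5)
Your plan is, up to dualization, the paper's own argument: the paper works covariantly, producing injective linear maps in both directions between $\Hom_{\lie g}(M_k/M_{k+1},V(\lambda_k,r))$ and $\Hom_{\cal I}(M,W_{\loc}(-w_0\lambda_k,-r)^*)$ of finite-dimensional spaces, whereas you transport everything to $M^*$ and count highest-weight vectors. Steps (a) and (b) are fine reformulations. But the entire content of the lemma sits in your step (c), which you flag as the obstacle and do not carry out, and it consists of two separate assertions, each needing its own argument. The first is that a cyclic generator $v\in M^*[-r]_{-w_0\lambda_k}$ of a copy of (a quotient of) $W_{\loc}(-w_0\lambda_k,-r)$ annihilates $M_{k+1}$ and is detected faithfully by its restriction to $M_k/M_{k+1}$. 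The reason you offer --- that the weight $-w_0\lambda_k$ of the generator forces the weights of the image into the right cone --- does not suffice: $M_{k+1}$ can have nonzero weight spaces inside $\lambda_k-Q^+$ (for instance when $\lambda_{k+1}-\lambda_k\in Q^+$), so a functional of weight $-w_0\lambda_k$ concentrated in degree $-r$ has no a priori reason to vanish on $M_{k+1}$. The correct argument, which is the paper's, passes to the associated module map $\psi\colon M\to W_{\loc}(-w_0\lambda_k,-r)^*$, notes that $\psi(M_{\lambda_s})\subset\bigl(W_{\loc}(-w_0\lambda_k,-r)^*\bigr)_{\lambda_s}=0$ for $s\ge k+1$ because $\lambda_s\in\lambda_k-Q^+$ would force $s\le k$ by the chosen enumeration of $P^+$, and then uses that $M_{k+1}$ is \emph{generated} by the spaces $M_{\lambda_s}$, $s\ge k+1$.

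The second, and more serious, missing piece is the extension direction: you must show that \emph{every} $\lie g$-map $M_k/M_{k+1}\to V(\lambda_k,r)$ arises from such a $v$, i.e.\ that a highest-weight functional defined only on the subquotient $M_k/M_{k+1}$ lifts to an element of $M^*$ with the required annihilation properties. Lifting a vector along the quotient $(M/M_{k+1})^*\to(M_k/M_{k+1})^*$ is dual to extending a map along the inclusion $M_k/M_{k+1}\hookrightarrow M/M_{k+1}$, and this is exactly where the paper uses that $I(\lambda_k,r)=P(-w_0\lambda_k,-r)^*$ is \emph{injective} to extend $\iota_{\lambda_k,r}\psi$ to $\tilde\psi\colon M/M_{k+1}\to I(\lambda_k,r)$, and then Proposition \ref{dual}(iii) to see that $\Im\tilde\psi\subset W_{\loc}(-w_0\lambda_k,-r)^*$. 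Your proposal cites Proposition \ref{dual}(iii) but never deploys the injectivity of $I(\lambda_k,r)$, so the inequality $\dim\Hom_{\lie g}(M_k/M_{k+1},V(\lambda_k,r))\le\dim\Hom_{\cal I}(M,W_{\loc}(-w_0\lambda_k,-r)^*)$ is left unproved. Once both halves are supplied, your bookkeeping of sub- versus quotient-duals can be made to work and the argument closes exactly as in the paper, by exhibiting injections in both directions.
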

 \begin{pf}
   Let $\iota_k: M_k/M_{k+1}\to M/M_{k+1}$ and  $\iota_{\lambda_k,r}: V(\lambda_k,r)\to I(\lambda_k,r)$ be the canonical injective morphisms. Given $\psi\in \Hom_{\lie g}(M_k/M_{k+1}, V(\lambda_k,r))$, let $\tilde\psi: M/M_{k+1}\to I(\lambda_k,r)$ be the map such that $$\tilde\psi\iota_k=\iota_{\lambda,r}\psi.$$ Since \begin{gather*}\wt M/M_{k+1}\subset \bigcup_{s=0}^k\lambda_s-Q^+,\qquad (M/M_{k+1})_{\lambda_k}\cong (M_k/M_{k+1})_{\lambda_k} \end{gather*} and $$\psi((M_k/M_{k+1})[p]_{\lambda_k})=0, p\ne r, $$it follows from Proposition \ref{dual}(iii) that $$\Im\tilde\psi\subset W_{\loc}(-w_0\lambda_k,-r)^*.$$ If $\pi:M\to M/M_{k+1}$ is the canonical projection, we see now that the assignment $\psi\to\pi.\tilde\psi$ is an injective linear map $\Hom_{\lie g}(M_k/M_{k+1}, V(\lambda_k,r))\longrightarrow \Hom_{\cal I}( M, W_{\loc}(-w_0\lambda_k, -r)^*)$.

 For the converse, let $\psi\in\Hom_{\cal I}( M, W_{\loc}(-w_0\lambda_k, -r)^*)$. Observe that $\psi(M_{k+1})=0$ , since $\wt W_{\loc}(-w_0\lambda_k, -r)^*)\subset\lambda_k-Q^+$. Hence  we have a non--zero map $ M/M_{k+1}\to  W_{\loc}(-w_0\lambda_k, -r)^*$ and we let $\tilde\psi$ be the restriction of this map to $M_k/M_{k+1}$.
 Since $V(\lambda_k,r)$ is the  unique irreducible  submodule of $W_{\loc}(-w_0\lambda_k, -r)^*$, we have
   $$\Im\psi\cap V(\lambda_k,r)\ne 0 \ \qquad \Im(M/M_{k+1})\cap V(\lambda_k,r)\ne 0.$$  Since $M_{\lambda_k}=(M_k)_{\lambda_k}$ it now follows that $\Im\tilde\psi =V(\lambda_k,r),$
    and the assignment $$\psi\to\tilde\psi, \qquad \Hom_{\cal I}( M, W_{\loc}(-w_0\lambda_k, -r)^*)\to \Hom_{\lie g}(M_k/M_{k+1}, V(\lambda_k,r)),$$ is an injective map and the Lemma is proved.

 \end{pf}
\subsection{}\label{enump}   Let  ${\rm{head}}(M)$ be the  maximal semi--simple quotient of $M$ and $\boh: M\to {\rm{head}}(M)$ be the corresponding map.  If $r$ is minimal such that $M[r]\ne 0$, then $\oplus_{p>r}M[p]$ is a proper submodule of $M$. The corresponding quotient is semisimple and isomorphic to $\ev_0M[r]$ and hence ${\rm{head}}(M)\ne 0$. Moreover, by Lemma \ref{irred}, we have
 $${\rm{head}}(M)= \bigoplus_{(\lambda,\ell)\in P^+\times\bz}  V(\lambda,\ell)^{\oplus\dim\Hom_{\cal I}(M, V(\lambda,\ell))}.$$  The map $\boh$ lifts to a   surjective  map \begin{equation}\label{projmap}\tilde\boh: \bigoplus_{(\lambda,\ell)\in P^+\times \bz} P(\lambda,\ell)^{\oplus\dim\Hom_{\cal I}(M, V(\lambda,\ell))}\longrightarrow M\to 0.\end{equation} The fact that a lift $\tilde\boh$ of $\boh$ exists is obvious since the $P(\lambda,\ell)$ are projective and we have surjective maps $P(\lambda,\ell)\to V(\lambda,\ell)\to 0$ sending $p_{\lambda,\ell}\to 1\otimes v_{\lambda,\ell}$. If $M'=M/\Im\tilde\boh$ is non--zero then, ${\rm{head}}(M')\ne 0$ and hence is a semisimple quotient of $M$ as well. But this  contradicts the fact that ${\rm{head}}(M)$ is the maximal semisimple quotient of $M$ and the fact that $$\Im\tilde\boh\supset {\rm{head}}(M).$$
\subsection{}  For $r\in\bz$, set  $m(k,r)=\dim\Hom_{\lie g}(M_k/M_{k+1}, V(\lambda_k,r))$ and notice that $${\rm{head}}(M_k/M_{k+1})\cong \bigoplus_{r\in\bz}  V(\lambda_k,r)^{m(k,r)}.$$. Using Corollary \ref{proj2} we see that  the map $$\bigoplus_{r\in\bz} P(\lambda_k,r)^{m(k,r)}\to M_k/M_{k+1}\to 0 ,$$ defined  in \eqref{projmap} factors through to $$\bigoplus_{r\in \bz}W(\lambda_k,r)^{\oplus m(k,r)}\longrightarrow M_k/M_{k+1}\to 0.$$
Proposition \ref{red1} now follows by using  Lemma \ref{chineq}.

\section{A combinatorial interlude}

\subsection{} Let $\{x_: 1\le j\le r\}$ be a set of indeterminates. The symmetric group $S_r$ acts naturally on the polynomial ring $\bz[x_1,\cdots, x_r]$ and we let $\Lambda_r$ be the corresponding ring of invariants. Set  $|x|=x_1\cdots x_r$ and denote by $\Lambda_r'$ the localization of $\Lambda_r$ at $|x|$. Equivalently $\Lambda_r'$ is the ring of invariants for the action of $S_r$ on the integer valued ring of Laurent polynomials in the $x_j$, $1\le j\le r$.
  Let $\hat{\Lambda}_r$ be the ring of all symmetric power series in $x_1,\cdots,x_r$, thus elements of $\hat{\Lambda}_r$ are of the form $\sum_{\ell\ge 0} p_\ell$ where $p_\ell\in \Lambda_r$ is homogeneous of degree $\ell$.

 \subsection{} Given any $f\in\bz[x_1^\pm,\cdots ,x_r^{\pm 1}]$, let $[f]_0$ be the constant term of $f$ and  let  $f^*$ be given by $f^*(x_1,\cdots, x_r)= f(x_1^{-1},\cdots ,x_r^{-1})$.
The  Macdonald inner product  $(\ ,\ ):\Lambda_r'\times\Lambda_r'\to \bz$  is,
$$(f,g)=\frac{1}{r!}\left[fg^*\prod_{1\le i<j\le r} (1-\frac{x_i}{x_j})(1-\frac{x_j}{x_i})\right]_0,\ \  f,g\in\Lambda_r'.$$ Observe that since $(f,g)=0$ if $f,g\in\Lambda_r$ are homogenous of different degree we can extend this naturally to a map $\Lambda_r\times\hat{\Lambda}_r\to \bz$.
The following is obvious (and, apparently, well-known).

\begin{lem}
\label{le:ort}
\begin{enumerit}
\item[(i)]
 For any $f,g,h\in \Lambda'_r$ one has
$$(fg\ ,\ h)=(f\ ,\ g^* h).$$
\item[(ii)] For any $f\in \Lambda_r$ and $\ell_1,\ldots,\ell_r\in {\bold Z}$ one has:
$$\left(f,\frac{1}{\prod\limits_{1\le i,j\le r} (1-x_i \ell_j)}\right)=f(\ell_1,\ldots,\ell_r).$$
\end{enumerit}\hfill\qedsymbol
\end{lem}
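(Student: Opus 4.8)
The plan for part (i) is simply to unwind the definition. The assignment $u\mapsto u^{*}$ sending $u(x)$ to $u(x_1^{-1},\ldots,x_r^{-1})$ is a ring involution of $\Lambda'_r$, so $(g^*h)^*=(g^*)^*h^*=gh^*$, and writing $\Delta=\prod_{1\le i<j\le r}(1-x_i/x_j)(1-x_j/x_i)$ one gets
$$(f,g^*h)=\frac{1}{r!}\bigl[f\,(g^*h)^*\,\Delta\bigr]_0=\frac{1}{r!}\bigl[(fg)\,h^*\,\Delta\bigr]_0=(fg,h).$$
No convergence issue arises since everything lies in $\Lambda'_r$ and the constant-term functional is defined there.

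For part (ii) I would first record that $\prod_{1\le i,j\le r}(1-x_i\ell_j)=\prod_{i=1}^r P(x_i)$ with $P(y)=\prod_{j=1}^r(1-\ell_j y)\in\bz[y]$ and $P(0)=1$; hence its reciprocal is a genuine element of $\hat\Lambda_r$ and the extended pairing $\Lambda_r\times\hat\Lambda_r\to\bz$ applies. Next I would specialize the Cauchy identity $\prod_{i,j}(1-x_iy_j)^{-1}=\sum_\lambda s_\lambda(x)s_\lambda(y)$ (sum over partitions $\lambda$ with $\ell(\lambda)\le r$) at $y_j=\ell_j$, which is legitimate because in each fixed degree only finitely many $\lambda$ contribute; this rewrites the kernel as $\sum_\lambda s_\lambda(x_1,\ldots,x_r)\,s_\lambda(\ell_1,\ldots,\ell_r)$ in $\hat\Lambda_r$.

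The crux is then the orthonormality $(s_\lambda,s_\mu)=\delta_{\lambda\mu}$ for partitions of length $\le r$. I would prove this by the standard Vandermonde computation: with $\rho=(r-1,\ldots,1,0)$ and $a_\mu=\det(x_i^{\mu_j})$ one checks $\Delta=a_\rho(x)\,a_\rho(x)^*$ and $s_\lambda=a_{\lambda+\rho}/a_\rho$, so $s_\lambda s_\mu^*\Delta=a_{\lambda+\rho}\,a_{\mu+\rho}^*$; expanding $a_{\lambda+\rho}=\sum_{w\in S_r}\sgn(w)\,x^{w(\lambda+\rho)}$ and extracting the constant term, the strict decrease of $\lambda+\rho$ and $\mu+\rho$ forces $\lambda=\mu$ and leaves exactly $r!$ surviving terms, cancelling the $1/r!$. (Equivalently one may cite that $\tfrac1{r!}[\,\cdot\,]_0$ is the $r$-variable restriction of the Hall inner product, under which Schur functions are orthonormal; see e.g.\ Macdonald's book, I.4.) This is the only step with genuine content; everything else is bookkeeping with the grading.

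Finally, since $\{s_\lambda:\ell(\lambda)\le r\}$ is a $\bz$-basis of $\Lambda_r$, I would write $f=\sum_\lambda c_\lambda s_\lambda$ and use that $(f',s_\lambda)=0$ whenever $f'$ is homogeneous of degree $\ne|\lambda|$ to compute
$$\left(f,\ \frac{1}{\prod_{1\le i,j\le r}(1-x_i\ell_j)}\right)=\sum_\lambda (f,s_\lambda)\,s_\lambda(\ell_1,\ldots,\ell_r)=\sum_\lambda c_\lambda\, s_\lambda(\ell_1,\ldots,\ell_r)=f(\ell_1,\ldots,\ell_r),$$
which is exactly the assertion of (ii). The main obstacle, such as it is, is organizing the Schur orthonormality (equivalently the identification of the constant-term bracket with the Hall pairing) cleanly; the rest is routine.
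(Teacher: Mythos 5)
Your proof is correct. Note that the paper itself offers no argument here — it declares the lemma ``obvious (and, apparently, well-known)'' and stops — so there is nothing to compare against; your write-up (the involution property of $u\mapsto u^*$ for (i), and for (ii) the Cauchy identity specialized at $y_j=\ell_j$ together with orthonormality of the $s_\lambda$ under the constant-term pairing via the Vandermonde computation $\Delta=a_\rho a_\rho^*$) is exactly the standard justification the authors are implicitly invoking, and it correctly handles the one point needing care, namely that the kernel lives in $\hat\Lambda_r$ and the pairing is applied degree by degree.
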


\subsection{} Let ${\rm{Par}}(r)$ be the set of all partitions $\xi=(\xi_1\ge\cdots\ge\xi_r\ge 0)$ with at most $r$ parts. Given  $\xi=(\xi_1\ge\xi_2\ge\cdots\xi_r\ge 0)\in {\rm{Par}}(r)$  let $$m_\xi=\sum_{\sigma\in S_r}x_{\sigma(1)}^{\xi_1}\cdots x_{\sigma(r)}^{\xi_r}\in\Lambda_r.$$ The set $\{m_\xi: \xi\in{\rm{Par}}(r)\}$  is  an integral basis of $\Lambda_r$, called the {\em symmetrized monomial basis}.

The basis OF $\Lambda_r$  consisting of Newton polynomials is given  as follows. For $0\le j\le r$ and a partition $\xi=(\xi_1\ge\cdots\ge\xi_r\ge 0)$ set,  \begin{gather*}p_j = x_1^j + x_2^j + \cdots +x_r^j,\qquad p_\xi = p_{\xi_1} \dots p_{\xi_r}.\end{gather*}

 The basis  of Schur functions $s_\xi$ is defined as follows.
Given  $ \bom=(m_1,\cdots,m_r)\in\bn^r$ let
$$d_\bom = \det
\begin{pmatrix}
x_1^{m_1} & \dots & x_r^{m_1}\\
\vdots & \vdots  & \vdots \\
x_1^{m_r} & \dots & x_r^{m_r}
\end{pmatrix}
$$
Then it can be shown that for a partition $\xi$ the polynomial $d_{(\xi_1+r-1,\xi_2+r-2, \dots, \xi_r)}$
is divisible by $d_{(r-1,r-2, \dots, 1,0)}$ and the ratio is the Schur function $s_\xi$.
{{If $\xi=(\xi_1\ge\cdots\ge\xi_r)$,  we have $$ |x|s_\xi(x_1,\ldots,x_n)=s_{\tilde\xi}(x_1,\ldots,x_n),\qquad  \tilde\xi=(\xi_1+1\ge\cdots\ge\xi_r+1).$$ In particular, this means that if $\xi\in {\rm{Par}}(r)$ and $\lambda\in {\rm{Par}}(r-1)$ is such that $\lambda_s=\xi_s-\xi_r$, $1\le s\le r-1$, then \begin{equation}\label{multischur}s_{\xi}(x_1,\cdots, x_r)=|x|^{\xi_r}s_\lambda(x_1,\cdots,x_r).\end{equation}
Moreover, it is well--known that  the elements
$$s_{\lambda,\ell}=|x|^\ell s_\lambda(x_1,\ldots,x_r),\qquad
\lambda\in {\rm Par}(r-1),\ \ \ell\in {\bold Z},$$ form an orthonormal (with respect to $(\ ,\ )$) ${\bold Z}$-linear basis of $\Lambda'_r$,
 and
$$\Lambda'_r=\bigoplus_{\ell\in {\bold Z}}  \Lambda^0_r\cdot |x|^\ell,\qquad \Lambda_r=\bigoplus_{\ell\ge 0} \Lambda^0_r\cdot |x|^\ell,$$
where $\Lambda_r^0$ is the ${\bold Z}$-linear span of  $\{s_\lambda(x_1,\ldots,x_r):\lambda\in {\rm Par}(r-1)\}$.

}}

\subsection{}
Define elements  $R_r,R'_r\in \hat \Lambda_r$ by
\begin{gather*}R_r=\frac{1}{(1-x_1)^r \cdots (1-x_r)^r},\\ \\
R'_r=\frac{1-x_1\cdots x_r}{(1-x_1)^r \cdots (1-x_r)^r}.\end{gather*}
Note that $R_r$ (resp. $R'_r$) can be viewed as the non-singular part of the character of the regular
representation of $U_r$ (resp. $SU_r$).

\begin{lem}\label{cau}
We have $$R_r
= \sum_{\xi \in {\rm{Par}}(r)} s_\xi(1,\ldots,1) s_\xi(x_1,\ldots, x_r)
,$$
$$R'_r
= \sum_{\lambda \in {\rm{Par}}(r-1)} s_\lambda(1,\ldots,1) s_\lambda(x_1,\ldots, x_r)
.$$
\end{lem}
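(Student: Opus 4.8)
The plan is to recognize both formulas as instances of the classical Cauchy identity for Schur functions, specialized appropriately. Recall the Cauchy identity: for two sets of variables $x_1,\dots,x_r$ and $y_1,\dots,y_N$ one has
\[
\prod_{i=1}^r\prod_{j=1}^N\frac{1}{1-x_iy_j}=\sum_{\xi}s_\xi(x_1,\dots,x_r)s_\xi(y_1,\dots,y_N),
\]
where $\xi$ ranges over partitions with at most $\min(r,N)$ parts. First I would set $N=r$ and specialize all $y_j=1$. The left-hand side becomes $\prod_{i=1}^r(1-x_i)^{-r}=R_r$, while the right-hand side becomes $\sum_{\xi\in\mathrm{Par}(r)}s_\xi(1,\dots,1)s_\xi(x_1,\dots,x_r)$, since only partitions with at most $r$ parts survive (more parts would give $s_\xi(1,\dots,1)=0$). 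This gives the first identity directly. One subtlety to address: the sum over $\xi$ on the right is infinite, but it lies in $\hat\Lambda_r$ because for each fixed total degree $\ell$ only finitely many $\xi\in\mathrm{Par}(r)$ contribute, so the identity is a genuine equality of symmetric power series, matching the definition of $\hat\Lambda_r$ given in the excerpt.

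For the second identity, I would use the decomposition $\Lambda_r=\bigoplus_{\ell\ge 0}\Lambda^0_r\cdot|x|^\ell$ recorded just above the statement, together with \eqref{multischur}, which says $s_\xi(x_1,\dots,x_r)=|x|^{\xi_r}s_\lambda(x_1,\dots,x_r)$ when $\lambda_s=\xi_s-\xi_r$. Grouping the partitions $\xi\in\mathrm{Par}(r)$ according to the value $m=\xi_r\ge 0$ of their smallest part, the remaining data is a partition $\lambda\in\mathrm{Par}(r-1)$, and $s_\xi(1,\dots,1)=s_\lambda(1,\dots,1)$ since evaluating $|x|$ at all ones gives $1$. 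Hence
\[
R_r=\sum_{m\ge 0}|x|^m\sum_{\lambda\in\mathrm{Par}(r-1)}s_\lambda(1,\dots,1)s_\lambda(x_1,\dots,x_r)=\frac{1}{1-|x|}\,\Bigl(\sum_{\lambda\in\mathrm{Par}(r-1)}s_\lambda(1,\dots,1)s_\lambda(x_1,\dots,x_r)\Bigr).
\]
Multiplying through by $1-|x|=1-x_1\cdots x_r$ and using the definition of $R'_r$ gives exactly the second claimed formula. (Alternatively, one can simply observe that the map $f\mapsto(1-|x|)f$ sending $R_r\mapsto R'_r$ corresponds on the Schur side to restricting the index set from $\mathrm{Par}(r)$ to $\mathrm{Par}(r-1)$, via the telescoping $s_\xi(1,\dots,1)s_\xi - |x|\,s_\xi(1,\dots,1)s_\xi$ and the reindexing $\xi\mapsto\tilde\xi$.)

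I do not anticipate a serious obstacle here: the whole argument is an assembly of the Cauchy identity and the principal specialization $s_\xi(1,\dots,1)$, both standard. The only point requiring a little care is bookkeeping with the infinite sums and the passage between $\Lambda_r$ and $\hat\Lambda_r$, namely checking that every rearrangement used (grouping by smallest part, factoring out the geometric series in $|x|$) is legitimate degree-by-degree; since each graded piece involves only finitely many partitions this is routine. The interpretation of $R_r$ and $R'_r$ as the non-singular parts of the characters of the regular representations of $U_r$ and $SU_r$ is merely motivational and does not enter the proof.
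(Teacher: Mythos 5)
Your proof is correct and follows essentially the same route as the paper: the first identity is the Cauchy identity specialized at $y_1=\cdots=y_r=1$, and the second comes from factoring out the smallest part via \eqref{multischur} and summing the resulting geometric series in $|x|$. The only cosmetic difference is that the paper performs the regrouping in the two-variable Cauchy identity before setting $y_j=1$, whereas you specialize first; the computation is identical.
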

\begin{proof} Let $y_1,\cdots,y_r$ be another set of indeterminates. Then, setting $y_1=\cdots=y_r=1$ in the Cauchy identity \cite[I, (4.3)]{Mac}:
$$\frac{1}{\prod\limits_{1\le i,j\le r} (1-x_i y_j)} = \sum_{\xi\in {\rm Par}(r)} s_\xi(x_1,\ldots,x_r) s_\xi(y_1,\ldots,y_r)$$ gives the  first identity of the Lemma. To prove the second one,
we use \eqref{multischur} to get
% hence $s_\xi(1,\ldots,1)=s_{\xi+(1,\ldots,1)}(1,\ldots,1)$. Therefore,
\begin{eqnarray*}\frac{1}{\prod\limits_{1\le i,j\le r} (1-x_i y_j)}
&=& \sum_{\xi \in {\rm{Par}}(r)} s_\xi(x_1,\ldots,x_r) s_{\xi}(y_1,\ldots, y_r)\\
&=&\sum_{\lambda\in {\rm{Par}}(r-1)}s_\lambda(x_1,\ldots,x_r) s_\lambda(y_1,\ldots,y_r)\sum_{\ell\in\bz_+} |x|^\ell|y|^\ell \\
&=&\sum_{\lambda\in {\rm{Par}}(r-1)}\frac{s_\lambda(x_1,\ldots,x_r) s_\lambda(y_1,\ldots, y_r)}{1-|x|  |y|}.\end{eqnarray*}
Hence,
$$\frac{1-x_1\cdots x_ry_1\cdots y_r}{\prod\limits_{1\le i,j\le r} (1-x_i y_j)}=\sum_{\lambda\in {\rm{Par}}(r-1)} s_\lambda(x_1,\ldots,x_r) s_\lambda(y_1,\ldots, y_r).$$
Now  setting $y_1=\cdots=y_r=1$ completes the proof of the second identity.
\end{proof}

\subsection{} We now prove,  \begin{prop}\label{mult}
 Let  $\lambda\in {\rm Par}(r-1)$, $\ell\in\bz_+$. If $f\in\Lambda_r$ is such that    $\ell\ge \deg(f)$, then
$$(s_{\lambda,\ell}\ , \ fR_r)=f(1,\ldots,1)s_\lambda(1,\ldots,1)$$
\end{prop}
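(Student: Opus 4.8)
The plan is to reduce the claimed identity to the two facts already in hand: the expansion of $R_r$ in Schur functions from \lemref{cau} and the orthonormality of the basis $\{s_{\mu,m}\}$ with respect to the Macdonald inner product. First I would write $f$ in the basis of $\Lambda_r$ given by the $s_{\mu,m}=|x|^m s_\mu(x_1,\dots,x_r)$ with $\mu\in{\rm Par}(r-1)$ and $m\ge 0$; say $f=\sum_{\mu,m} c_{\mu,m}\,s_{\mu,m}$. Multiplying by $R_r=\sum_{\xi\in{\rm Par}(r)} s_\xi(1,\dots,1)\,s_\xi(x_1,\dots,x_r)$ and using part (i) of \lemref{le:ort} (with $g=s_{\lambda,\ell}$, noting $s_{\lambda,\ell}^*=|x|^{-\ell}s_\lambda(x_1^{-1},\dots,x_r^{-1})$ and that $s_\lambda$ has real, hence invariant-under-$*$-up-to-$|x|$ coefficients) one gets
$$(s_{\lambda,\ell}\,,\,fR_r)=\sum_{\mu,m}\sum_{\xi} c_{\mu,m}\,s_\xi(1,\dots,1)\,\bigl(s_{\lambda,\ell}\,,\,s_{\mu,m}\,s_\xi\bigr).$$
So everything comes down to computing the triple inner product $\bigl(s_{\lambda,\ell}\,,\,s_{\mu,m}s_\xi\bigr)$, which by the Pieri/Littlewood--Richardson-type bookkeeping is just the multiplicity with which $s_{\lambda,\ell}$ appears in the product $s_{\mu,m}s_\xi$.

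The key point, and the one I would spell out carefully, is the \emph{degree constraint}. We have $\deg s_{\mu,m}=|\mu|+rm$ and $\deg s_\xi=|\xi|$, so $s_{\mu,m}s_\xi$ is homogeneous of degree $|\mu|+rm+|\xi|$; for it to contribute to $s_{\lambda,\ell}$ we need $|\lambda|+r\ell=|\mu|+rm+|\xi|$. Since $c_{\mu,m}\ne 0$ forces $|\mu|+rm\le\deg f\le\ell$, the only way to reach total degree $|\lambda|+r\ell$ is to have $|\xi|\ge r(\ell-m)\ge |\lambda|+r\ell - \deg f \ge |\lambda|$, and more precisely $|\xi|=|\lambda|+r\ell-|\mu|-rm$. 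Writing $\xi$ in the form $\xi=\widehat\nu$ with $\nu\in{\rm Par}(r-1)$ and $|\xi|=|\nu|+r\xi_r$ (so $s_\xi=|x|^{\xi_r}s_\nu$ by \er{multischur}), the product becomes $s_{\mu,m}s_\xi=|x|^{m+\xi_r}s_\mu s_\nu$, and multiplicity of $s_{\lambda,\ell}=|x|^\ell s_\lambda$ in it is the coefficient of $s_\lambda$ in $|x|^{m+\xi_r-\ell}s_\mu s_\nu$. I expect the clean way to finish is to observe that, within the degree window $\deg f\le\ell$, the products $s_{\mu,m}s_\xi$ that can hit $s_{\lambda,\ell}$ all arise from multiplying by the ``stable'' part of $R_r$, so that effectively
$$(s_{\lambda,\ell}\,,\,fR_r)=\sum_{\xi}\;(\text{coeff. of }s_{\lambda,\ell}\text{ in }f\,s_\xi)\;s_\xi(1,\dots,1).$$

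Rather than push the Littlewood--Richardson calculus by hand, the slicker route I would actually take is to use \lemref{le:ort}(ii): note $R_r=\dfrac{1}{\prod_{1\le i,j\le r}(1-x_i\cdot 1)}$ with all $\ell_j=1$, so for any $g\in\Lambda_r$ one has $(g,R_r)=g(1,\dots,1)$. Apply part (i) to move $s_{\lambda,\ell}^*$ across: $(s_{\lambda,\ell}\,,\,fR_r)=(f\cdot s_{\lambda,\ell}^{*}\,,\,R_r)$ — but $s_{\lambda,\ell}^*$ is a Laurent, not polynomial, symmetric function, so I first write $s_{\lambda,\ell}^{*}=|x|^{-\ell}s_\lambda(x^{-1})$ and use that $|x|^{-\ell}s_\lambda(x^{-1})=|x|^{-\ell-N}s_{w_0\lambda\text{-type reversal}}(x)$ for a suitable polynomial; the hypothesis $\ell\ge\deg f$ is exactly what guarantees that $f\cdot s_{\lambda,\ell}^*$, after clearing denominators $|x|$, still lands in $\Lambda_r$ rather than having genuinely negative-degree terms, so that $(f\,s_{\lambda,\ell}^*,R_r)=(f\,s_{\lambda,\ell}^*)(1,\dots,1)=f(1,\dots,1)\,s_{\lambda,\ell}^*(1,\dots,1)=f(1,\dots,1)\,s_\lambda(1,\dots,1)$, since evaluating at $x_i=1$ kills the $|x|^{\pm}$ factors. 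The main obstacle is precisely this last bookkeeping: checking that the localization/degree hypothesis $\ell\ge\deg f$ makes the ``evaluate the argument of $(\cdot\,,R_r)$ at $(1,\dots,1)$'' step legitimate — i.e. that no genuinely Laurent (non-polynomial) tail survives to spoil \lemref{le:ort}(ii). I would handle that by expanding $f$ in the $s_{\mu,m}$ basis as above and checking the inequality $m\le\ell$ termwise, which is immediate from $\deg s_{\mu,m}\ge rm\ge m$ and $\deg f\le\ell$.
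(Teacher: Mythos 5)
Your second, ``slicker'' route is indeed the paper's intended argument, but you have applied \lemref{le:ort}(i) to the wrong factor, and the step as written is not merely informal --- it is false. The identity $(fg,h)=(f,g^*h)$ converts $(s_{\lambda,\ell}\,,\,fR_r)=(s_{\lambda,\ell}\,,\,(f^*)^*R_r)$ into $(s_{\lambda,\ell}f^*\,,\,R_r)$: it is $f$ that acquires the star, not $s_{\lambda,\ell}$. Your expression $(f\,s_{\lambda,\ell}^*\,,\,R_r)$ equals, by the same lemma, $(f\,,\,s_{\lambda,\ell}R_r)$, which is a different quantity. Already for $r=1$, $\lambda$ empty, $\ell=1$, $f=1+x$ one has $(x\,,\,(1+x)R_1)=2=f(1)$, while $(f\cdot x^{-1}\,,\,R_1)=(1+x^{-1}\,,\,\textstyle\sum_{k\ge0}x^k)=1$. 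Correspondingly, your claim that $\ell\ge\deg f$ forces $f\,s_{\lambda,\ell}^*\in\Lambda_r$ is wrong (in the example $fx^{-1}\notin\bz[x]$); the hypothesis does exactly the analogous job for the \emph{correct} expression, namely $s_{\lambda,\ell}f^*=s_{\lambda,\ell-\deg f}\cdot\bigl(|x|^{\deg f}f^*\bigr)\in\Lambda_r$, since $|x|^{\deg f}f^*$ is an honest polynomial. With that one-line repair the rest of your argument is the paper's proof: apply \lemref{le:ort}(ii) with $\ell_1=\cdots=\ell_r=1$ to get $(s_{\lambda,\ell}f^*\,,\,R_r)=(s_{\lambda,\ell}f^*)(1,\ldots,1)=s_\lambda(1,\ldots,1)f(1,\ldots,1)$.

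Your first route (expanding $f$ in the $s_{\mu,m}$ basis, invoking \lemref{cau}, and tracking Littlewood--Richardson multiplicities) is abandoned before it establishes anything: it reduces the statement to an unverified combinatorial identity about which $\xi\in{\rm Par}(r)$ can contribute, and it is precisely there that the hypothesis $\ell\ge\deg f$ would have to enter (to ensure the relevant $\xi$ are genuine partitions). So as it stands neither branch of the proposal is a complete proof, though the second branch becomes the paper's proof once the conjugation is put on the correct factor.
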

\begin{proof} {{By Lemma \ref{le:ort} (i), we have $$(s_{\lambda,\ell}\ ,\ f R_r)=(s_{\lambda,\ell}f^*
\ , \ R_r) .$$ Since $\ell \ge \deg f$, we have $s_{\lambda,\ell}f^*=s_{\lambda,\ell-\deg(f)}(|x|^{\deg(f)}f^*)\in \Lambda_r$ and we can now use Lemma \ref{le:ort}(ii) with  $\ell_1=\cdots=\ell_r=1$ to get
$$(s_{\lambda,\ell}f^*
\ , \ R_r) =(s_{\lambda,\ell}f^*)(1,\cdots ,1)= s_{\lambda}(1,\cdots ,1)f(1,\cdots ,1),$$ as required.}}

\end{proof}

\subsection{} Set, $$(a;q)_\infty:=\prod_{i=0}^\infty(1-aq^i).$$ Our goal is to find an asymptotic expansion of the element
$$Q:=\frac{(q x_1\cdots x_r; q)_\infty}{(x_1;q)_\infty^r \cdots(x_r;q)_\infty^r}=\sum_{\lambda\in {\rm Par}(r-1),\ell\ge 0}\phi_{\lambda,\ell}(q) s_{\lambda,\ell}, $$
in $\hat \Lambda_r$, where \begin{equation}
\label{a} \phi_{\lambda,\ell}(q)=(s_{\lambda,\ell}\ ,\ Q) =\sum_{s\ge 0} a_{\lambda,\ell}^s q^s \in {\bold Z}[[q]].\end{equation} Define integers $c_m$ by requiring, $$\frac{1}{(q;q)^{r^2-1}_\infty}=\sum_{m\ge 0} c_m q^m.$$
\begin{prop}\label{lim} For  $\lambda\in {\rm Par}(r-1)$ and  $\ell\ge 0$  we have
$$a_{\lambda,\ell}^m= c_ms_\lambda(1,\ldots,1),\ \ {\rm{if}}\ \  \ell \ge rm. $$
\end{prop}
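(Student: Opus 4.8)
The idea is to compare $Q$ with the limiting expression obtained by letting $q\to 0$ inside the "local" factors while keeping the genuinely $q$-dependent part separate. Write
$$Q=\frac{(qx_1\cdots x_r;q)_\infty}{(x_1;q)_\infty^r\cdots(x_r;q)_\infty^r}
=\underbrace{\frac{(qx_1\cdots x_r;q)_\infty}{\prod_{i=1}^r(1-x_i)^r}}_{\text{contributes $R'_r$-type data}}\cdot\frac{1}{\prod_{i=1}^r(x_iq;q)_\infty^r}\cdot(\text{correction}),$$
more precisely factor out $(1-x_i)$ from each $(x_i;q)_\infty = (1-x_i)(x_iq;q)_\infty$. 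Then
$$Q = R'_r\cdot\frac{1}{\prod_{i=1}^r (x_iq;q)_\infty^{\,r}}\cdot\frac{1}{1-x_1\cdots x_r},$$
wait — one must be careful: $R'_r$ already carries the numerator $1-x_1\cdots x_r$, and the numerator of $Q$ is $(qx_1\cdots x_r;q)_\infty=(1-qx_1\cdots x_r)(q^2x_1\cdots x_r;q)_\infty$, which at $q=0$ is $1$, not $1-x_1\cdots x_r$. So the cleaner bookkeeping is: $Q = G(q)\cdot R_r$ where $R_r=\prod(1-x_i)^{-r}$ and $G(q):=(qx_1\cdots x_r;q)_\infty\prod_{i=1}^r(x_iq;q)_\infty^{-r}$ is a power series in $q$ with coefficients in $\Lambda_r$, and $G(0)=1$. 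The point is that $G(q)$ has a bounded "$x$-degree growth": the coefficient of $q^s$ in $G(q)$ lies in $\Lambda_r$ and, crucially, has degree at most $s$ (each factor $(x_iq^j;q)$-type expansion contributes degree $\le$ the power of $q$ it sits at), so I can write $G(q)=\sum_{s\ge 0} g_s q^s$ with $g_s\in\Lambda_r$, $\deg g_s\le s$, $g_0=1$.

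**Extracting the coefficient.** Now compute $\phi_{\lambda,\ell}(q)=(s_{\lambda,\ell},Q)=(s_{\lambda,\ell},\sum_s g_s q^s R_r)=\sum_s q^s (s_{\lambda,\ell}, g_s R_r)$. For the $q^m$-coefficient, $a_{\lambda,\ell}^m=(s_{\lambda,\ell}, g_m R_r)$. Apply Proposition \ref{mult}: it requires $\ell\ge\deg(g_m)$, and since $\deg g_m\le m\le rm$, the hypothesis $\ell\ge rm$ certainly suffices. Hence $a_{\lambda,\ell}^m = g_m(1,\ldots,1)\, s_\lambda(1,\ldots,1)$ whenever $\ell\ge rm$. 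It remains to identify $\sum_m g_m(1,\ldots,1)q^m = G(q)\big|_{x_1=\cdots=x_r=1}$. Setting all $x_i=1$ in $G(q)=(qx_1\cdots x_r;q)_\infty\prod_i(x_iq;q)_\infty^{-r}$ gives $(q;q)_\infty\cdot(q;q)_\infty^{-r^2}=(q;q)_\infty^{1-r^2}=(q;q)_\infty^{-(r^2-1)}=\sum_m c_m q^m$ by the definition of the $c_m$. Therefore $g_m(1,\ldots,1)=c_m$ and $a_{\lambda,\ell}^m=c_m s_\lambda(1,\ldots,1)$ for $\ell\ge rm$, as claimed.

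**Main obstacle.** The only nontrivial point is the degree bound $\deg g_m\le m$ (equivalently, that the $q^m$-coefficient of $G(q)$ is a polynomial in the $x_i$ of total degree at most $m$). This follows from writing $(x_iq;q)_\infty^{-1}=\prod_{j\ge 1}(1-x_iq^j)^{-1}=\sum_{\text{partitions}} x_i^{|\mu|}q^{\langle\text{parts}\rangle}$ and $(qx_1\cdots x_r;q)_\infty=\sum_{j\ge 0}(-1)^j q^{\binom{j+1}{2}}(x_1\cdots x_r)^j/(q;q)_j$-type expansions, and observing that in every monomial the exponent of $q$ is at least the total $x$-degree contributed by that factor; multiplying the $1+r^2$ such series, the $q$-exponent of any term is $\ge$ its $x$-degree. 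I should state and prove this as a short lemma (or inline remark) before invoking Proposition \ref{mult}. Everything else is a direct application of the orthonormality of $\{s_{\lambda,\ell}\}$, Lemma \ref{le:ort}, Proposition \ref{mult}, and the substitution $x_i=1$.
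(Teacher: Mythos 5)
Your decomposition $Q=R_r\cdot G(q)$ is exactly the paper's ($G(q)$ is the paper's $Q'$, since $(x_iq;q)_\infty=(qx_i;q)_\infty$), and the overall structure — extract the $q^m$-coefficient, apply Proposition \ref{mult}, then set $x_i=1$ — is the same. However, your stated degree bound $\deg g_m\le m$ is false, and so is the justification you give for it. The claim that ``in every monomial the exponent of $q$ is at least the total $x$-degree'' holds for the factors $(qx_i;q)_\infty^{-1}$, but not for the numerator $(qx_1\cdots x_r;q)_\infty=\prod_{j\ge1}(1-x_1\cdots x_r\,q^j)$: already at order $q^1$ this contributes $-x_1\cdots x_r$, of $x$-degree $r$. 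Concretely, $g_1=r(x_1+\cdots+x_r)-x_1\cdots x_r$ has degree $r$, not $1$. The correct statement is that each power of $q$ carries at most $r$ powers of $x$ (degree $1$ per $q$ from each $(qx_i;q)_\infty^{-1}$, degree $r$ per $q$ from the numerator), giving $\deg g_m\le rm$ — which is the bound the paper uses and is precisely why the hypothesis of the proposition is $\ell\ge rm$ rather than $\ell\ge m$. Your proof survives because you only ever invoke $\ell\ge rm\ge\deg g_m$, but the lemma you propose to ``state and prove'' would be wrong as stated; replace it with $\deg g_m\le rm$ and the argument is complete and coincides with the paper's.
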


\begin{proof} Setting, $$Q'=\frac{(q x_1\dots x_r; q)_\infty}{(qx_1;q)_\infty^r \dots(qx_r;q)_\infty^r},$$ it is clear that
$$Q=\frac{(q x_1\cdots x_r; q)_\infty}{(x_1;q)_\infty^r \cdots(x_r;q)_\infty^r}= R_r\cdot Q'.$$
Writing   $Q'=\sum_{m\ge 0}Q'_mq^m$ as a power series in $q$ with coefficients in $\Lambda_r$, we get from \eqref{a} that
$$\phi_{\lambda,\ell}(q)=(s_{\lambda,\ell},Q)=(s_{\lambda,\ell},R_r\sum_{m\ge 0} Q'_m q^m)=\sum_{m\ge 0}a_{\lambda,\ell}^m q^m,\qquad i.e.,a_{\lambda,\ell}^m=(s_{\lambda,\ell},R_r Q'_m). $$
Since  $\deg(Q'_m)\le rm$, Proposition \ref{mult} implies that
$$a_{\lambda,\ell}^m=Q'_m(1,\ldots,1)s_\lambda(1,\ldots,1),\qquad {\rm{if}}\ \ \ell\ge mr. $$ The proposition follows by noticing that if we set $x_1=\cdots =x_r=1$, we have
$$\frac{1}{(q;q)^{r^2-1}_\infty}=Q'(1,\ldots,1)= \sum_{m\ge 0} Q'_m(1,\ldots,1) q^m=\sum_{m\ge 0} c_m q^m.$$

\end{proof}

\subsection{} {{

For $\lambda\in {\rm Par}(r-1)$ and  $\ell\ge 0$ define power series $\psi_{\lambda,\ell}=\sum_{m\ge 0}b_{\lambda,\ell}^mq^m\in {\bold Z}[[q]]$  by:
$$\psi_{\lambda,\ell}(q)=\phi_{\lambda,\ell}(q)-\phi_{\lambda,\ell-1}(q),\qquad  b_{\lambda,\ell}^m=a_{\lambda,\ell}^m-a_{\lambda,\ell-1}^m,$$ where we adopt the convention that $\phi_{\lambda,-1}=0$.  Clearly,$$\phi_{\lambda,\ell}(q) =  \sum\limits_{k=0}^\ell  \psi_{\lambda,k}(q),$$ and moreover, we see from Proposition \ref{lim} that  $b_{\lambda,\ell}^m=0$ when $\ell>mr$. This means that
$$\sum_{\ell\ge 0} b_{\lambda,\ell}^m<\infty,$$ and hence
$$\sum_{\ell\ge 0}\psi_{\lambda,\ell}(q)=\sum_{\ell\ge 0}\sum_{m\ge 0} b_{\lambda,\ell}^mq^m=\sum_{m\ge 0}\left(\sum_{\ell\ge 0} b_{\lambda,\ell}^m\right)q^m,$$ is a well--defined element of $\bz[[q]]$.
 Using Proposition \ref{lim} again,  we see  that $$\sum_{k\ge 0 }\psi_{\lambda,k}=\lim_{\ell\to\infty}\phi_{\lambda,\ell}=\sum_{m\ge 0}c_ms_\lambda(1,\cdots,1)q^m= \frac{s_\lambda(1,\cdots ,1)}{(q;q)^{r^2-1}_\infty}.$$ Together with the fact that \begin{eqnarray*}\sum_{\lambda\in {\rm Par}(r-1)}\sum_{ \ell\ge 0}\psi_{\lambda,\ell}(q)s_{\lambda,\ell}&=\sum_{\lambda\in {\rm Par}(r-1)}\sum_{ \ell\ge 0}\phi_{\lambda,\ell}(q)(s_{\lambda,\ell}-s_{\lambda,\ell+1})\\ & =
\sum_{\lambda\in {\rm Par}(r-1)}\sum_{ \ell\ge 0}\phi_{\lambda,\ell}(q)(1-|x|)s_{\lambda,\ell},\end{eqnarray*} we have now proved,
\begin{prop}\label{lhs} We have an equality of symmetric power series,
 $$\frac{(x_1\cdots x_r; q)_\infty}{(x_1;q)_\infty^r \cdots(x_r;q)_\infty^r}=\sum_{\lambda\in {\rm Par}(r-1)}\sum_{ \ell\ge 0}\psi_{\lambda,\ell}(q)s_{\lambda,\ell},$$ where
$\psi_{\lambda,\ell}(q)\in q^{\lfloor \frac{\ell}{r}\rfloor}\cdot {\bold Z}[[q]]$ and
$$\sum_{\ell\ge 0}  \psi_{\lambda,\ell}(q) = \frac{s_\lambda(1,\ldots,1)}
{(q;q)^{r^2-1}_\infty}.$$\hfill\qedsymbol
\end{prop}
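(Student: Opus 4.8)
The plan is to collect the pieces developed in the preceding subsections — Propositions~\ref{mult} and~\ref{lim}, together with the definitions of $\phi_{\lambda,\ell}$, $\psi_{\lambda,\ell}$, $a_{\lambda,\ell}^m$ and $b_{\lambda,\ell}^m$ — and simply observe that they already assemble into the asserted identity. First I would record that by definition of $Q$ and of the coefficients $\phi_{\lambda,\ell}$ in \eqref{a}, one has the expansion $Q=\sum_{\lambda\in {\rm Par}(r-1),\ \ell\ge 0}\phi_{\lambda,\ell}(q)s_{\lambda,\ell}$ in $\hat\Lambda_r$, where the $s_{\lambda,\ell}$ form an orthonormal basis of $\Lambda_r'$. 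Multiplying through by $(1-|x|)$ kills nothing in the power-series sense and, using $s_{\lambda,\ell}-s_{\lambda,\ell+1}=(1-|x|)s_{\lambda,\ell}$ from \eqref{multischur}, converts the left-hand side $(x_1\cdots x_r;q)_\infty/\big((x_1;q)_\infty^r\cdots(x_r;q)_\infty^r\big)=(1-|x|)Q$ into a telescoping reindexed sum whose $s_{\lambda,\ell}$-coefficient is exactly $\phi_{\lambda,\ell}(q)-\phi_{\lambda,\ell-1}(q)=\psi_{\lambda,\ell}(q)$ (with the convention $\phi_{\lambda,-1}=0$). This is the first displayed identity of the Proposition.

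Next I would verify the two claimed properties of $\psi_{\lambda,\ell}$. The divisibility $\psi_{\lambda,\ell}(q)\in q^{\lfloor \ell/r\rfloor}{\bold Z}[[q]]$ follows from Proposition~\ref{lim}: that proposition gives $a_{\lambda,\ell}^m=c_m s_\lambda(1,\ldots,1)$ whenever $\ell\ge rm$, in particular $a_{\lambda,\ell}^m=a_{\lambda,\ell-1}^m$ as soon as $\ell-1\ge rm$, i.e.\ whenever $m\le \lfloor(\ell-1)/r\rfloor$; hence $b_{\lambda,\ell}^m=a_{\lambda,\ell}^m-a_{\lambda,\ell-1}^m$ vanishes for all $m<\ell/r$, which is the stated divisibility (I should double-check the exact floor bound here — whether it is $\lfloor \ell/r\rfloor$ or $\lceil \ell/r\rceil$ — by tracking the inequalities $\ell\ge rm$ versus $\ell-1\ge rm$ carefully; the argument in the paragraph preceding the Proposition already asserts $b_{\lambda,\ell}^m=0$ for $\ell>mr$, and that is what is needed). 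For the summation formula, the vanishing $b_{\lambda,\ell}^m=0$ for $\ell>mr$ shows that for each fixed $m$ only finitely many $\ell$ contribute, so $\sum_{\ell\ge 0}\psi_{\lambda,\ell}(q)=\sum_{m\ge 0}\big(\sum_{\ell\ge 0}b_{\lambda,\ell}^m\big)q^m$ is a well-defined element of ${\bold Z}[[q]]$, and the telescoping sum $\sum_{\ell=0}^L\psi_{\lambda,\ell}(q)=\phi_{\lambda,L}(q)$ combined with $\lim_{L\to\infty}\phi_{\lambda,L}(q)=\sum_{m\ge 0}c_m s_\lambda(1,\ldots,1)q^m=s_\lambda(1,\ldots,1)/(q;q)_\infty^{r^2-1}$ (Proposition~\ref{lim} again, with the defining series for $c_m$) gives the second displayed identity.

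The one point requiring a little care — and the only genuine obstacle — is justifying that all these manipulations of infinite sums are legitimate in $\hat\Lambda_r$ and in $\bz[[q]]$ simultaneously, i.e.\ that the double series $\sum_{\lambda,\ell}\phi_{\lambda,\ell}(q)s_{\lambda,\ell}$ converges coefficientwise in $q$ when expanded in any monomial basis of $\Lambda_r$, so that multiplying by $(1-|x|)$ and re-collecting by $s_{\lambda,\ell}$ is valid term by term. This is handled by the degree bound: each $s_{\lambda,\ell}$ is homogeneous of degree $\ell+|\lambda|$, so for a fixed total degree $d$ only finitely many pairs $(\lambda,\ell)$ occur, and within each the power series $\phi_{\lambda,\ell}(q)\in\bz[[q]]$ is perfectly well-defined; thus everything takes place in the completed ring $\hat\Lambda_r[[q]]$ graded by $x$-degree, where these rearrangements are formal and valid. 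With that in hand the Proposition follows immediately, so I expect the write-up to be short, essentially a reorganization of the material already proved in the preceding subsections rather than any new argument.
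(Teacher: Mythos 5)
Your proposal is correct and follows essentially the same route as the paper: define $\psi_{\lambda,\ell}=\phi_{\lambda,\ell}-\phi_{\lambda,\ell-1}$, use $s_{\lambda,\ell}-s_{\lambda,\ell+1}=(1-|x|)s_{\lambda,\ell}$ to telescope $(1-|x|)Q$ into the asserted expansion, and invoke Proposition~\ref{lim} both for the vanishing $b_{\lambda,\ell}^m=0$ when $\ell>mr$ (which yields the $q^{\lfloor \ell/r\rfloor}$ divisibility and the interchange of summations) and for the limit $\lim_{\ell\to\infty}\phi_{\lambda,\ell}=s_\lambda(1,\ldots,1)/(q;q)_\infty^{r^2-1}$. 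The convergence discussion via homogeneity of $s_{\lambda,\ell}$ is left implicit in the paper but is exactly the right justification.
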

}}

\subsection{} Let $q,t$ be indeterminates and let  $\bq(q,t)$ be the field of rational functions in $q$ and $t$ over the field $\bq$  of rational numbers.
The {\em Macdonald scalar product} on $\Lambda_r(q,t)=\Lambda_r\otimes\bq(q,t)$, is defined on the Newton polynomials by
$$\left< p_\xi, p_\psi\right>_{q,t} = \delta_{\xi,\psi} \prod_{i=1}^r  i^{n_i} n_i! \prod_{s=1}^{\ell(\xi)} \frac{1-q^{\xi_s}}{1-t^{\xi_s}},$$
for $\xi,\psi\in {\rm Par}(r)$,  where $n_i=|\{k:\xi_k=i\}|$ and $\ell(\xi)$ is the number of non--zero parts of $\xi$.
The Macdonald polynomials $P_\xi(x; q,t)$ in $x=(x_1,\ldots,x_r)$  is the orthonormal basis of $\Lambda_r(q,t)$ obtained by applying the Gram--Schmidt process to the lexicographically ordered  basis of monomial symmetric functions. Thus, $$P_\xi(x;q,t)= m_\xi+\sum_{\psi<\xi}u_{\xi,\psi}m_\xi(x),\ \ u_{\xi,\psi}\in\bq(q,t).$$

\begin{prop}\label{McCauchy} For $r\ge 1$ one has:
\begin{equation}\label{mcc}\frac{1}{\prod\limits_{1\le i,j\le r} (x_iy_j; q)_\infty}= \sum_{\xi\in{\rm{Par}}(r) } \frac{P_\xi(x; q,0) P_\xi(y; q,0)}{(q;q)_{\xi_1-\xi_2} \cdots
(q;q)_{\xi_{r-1}-\xi_r} (q;q)_{\xi_r}},\end{equation}
where $(a;q)_m:=\prod_{i=0}^m(1-aq^i)$ denotes the the (shifted) $q$-Pochhammer symbol.
\end{prop}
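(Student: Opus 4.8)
The plan is to obtain \eqref{mcc} as a $t=0$ specialization of the general Macdonald–Cauchy identity, combined with the explicit value of the Macdonald norm square at $t=0$. Recall (Macdonald, \cite[VI, (4.13)]{Mac}) that for $r$ variables one has the Cauchy-type formula
$$\prod_{1\le i,j\le r}\frac{(tx_iy_j;q)_\infty}{(x_iy_j;q)_\infty}=\sum_{\xi\in{\rm Par}(r)}\frac{P_\xi(x;q,t)P_\xi(y;q,t)}{\left<P_\xi,P_\xi\right>_{q,t}},$$
where the sum is over partitions with at most $r$ parts because $P_\xi$ in $r$ variables vanishes for $\ell(\xi)>r$. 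Setting $t=0$ kills the numerator product on the left, leaving exactly the left-hand side of \eqref{mcc}, and turns $P_\xi(x;q,t)$ into $P_\xi(x;q,0)$ (the Hall–Littlewood/Macdonald polynomial at $t=0$, which is well defined since the transition coefficients $u_{\xi,\psi}(q,t)$ are regular at $t=0$). So the whole content is the computation of $\left<P_\xi,P_\xi\right>_{q,t}\big|_{t=0}$.

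First I would recall Macdonald's product formula for the norm, $\left<P_\xi,P_\xi\right>_{q,t}=\prod_{s\in\xi}\frac{1-q^{a(s)+1}t^{l(s)}}{1-q^{a(s)}t^{l(s)+1}}$ over the cells $s$ of the Young diagram of $\xi$, where $a(s),l(s)$ are arm and leg lengths — but here one must be careful, since we are working with $P_\xi$ in finitely many ($r$) variables and the relevant normalization is $\left<P_\xi,P_\xi\right>_{q,t}'$ of \cite[VI, (9.8)]{Mac} for the finite scalar product, or equivalently the $b_\xi$-type factor. The cleaner route is: specialize the cell-product formula at $t=0$. A cell $s$ with $l(s)\ge 1$ contributes $\frac{1-0}{1-0}=1$, and a cell $s$ with $l(s)=0$ (i.e. $s$ in the bottom row of its column, which for a partition means the cells in rows indexed so that below them the column ends) contributes $\frac{1-q^{a(s)+1}}{1-q^{a(s)}\cdot 0}=1-q^{a(s)+1}$. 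Collecting the arms of the cells with zero leg length over the conjugate-partition bookkeeping yields precisely $\prod_{i=1}^{r}(q;q)_{\xi_i-\xi_{i+1}}$ with the convention $\xi_{r+1}=0$, which is the denominator appearing in \eqref{mcc}. Thus $\left<P_\xi,P_\xi\right>_{q,0}=(q;q)_{\xi_1-\xi_2}\cdots(q;q)_{\xi_{r-1}-\xi_r}(q;q)_{\xi_r}$, and substituting gives \eqref{mcc} verbatim.

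The main obstacle I anticipate is bookkeeping rather than conceptual: matching Macdonald's arm/leg normalization conventions (and the distinction between the infinite-variable inner product $\left<\ ,\ \right>_{q,t}$ defined here via Newton polynomials and the finite-dimensional torus integral inner product for which the Cauchy kernel $\prod(tx_iy_j;q)_\infty/(x_iy_j;q)_\infty$ is the reproducing kernel) so that the $t=0$ limit lands on exactly the stated product of $q$-Pochhammer symbols, with the correct treatment of the last factor $(q;q)_{\xi_r}$ coming from the boundary convention $\xi_{r+1}=0$. A safe way to sidestep convention-chasing is to verify the identity by an independent argument: expand the left side of \eqref{mcc} using Proposition~\ref{lim}/Proposition~\ref{lhs} style manipulations, or check it directly for $r=1$ (where it reduces to the $q$-binomial theorem $\sum_{k\ge0}\frac{(xy)^k}{(q;q)_k}=\frac1{(xy;q)_\infty}$) and then argue by the known stability of Macdonald polynomials in the number of variables together with the fact that both sides are symmetric reproducing kernels for the same pairing at $t=0$; uniqueness of the reproducing kernel forces the coefficient of $P_\xi(x;q,0)P_\xi(y;q,0)$ to be $1/\left<P_\xi,P_\xi\right>_{q,0}$, reducing everything again to the single norm computation, which one then does cell-by-cell as above.
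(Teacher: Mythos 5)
Your proposal is correct and follows essentially the same route as the paper: both specialize Macdonald's Cauchy identity $\prod_{i,j}\frac{(tx_iy_j;q)_\infty}{(x_iy_j;q)_\infty}=\sum_\xi b_\xi(q,t)P_\xi(x;q,t)P_\xi(y;q,t)$ at $t=0$ and then evaluate $b_\xi(q,0)=\langle P_\xi,P_\xi\rangle_{q,0}^{-1}$ to get the product of $q$-Pochhammer symbols. The only (immaterial) difference is that you compute this norm via the arm/leg cell-product formula, collecting the cells of leg length zero, whereas the paper uses Macdonald's recursion \cite[VI, (6.19)]{Mac} for $b_\xi$; both yield $(q;q)_{\xi_1-\xi_2}\cdots(q;q)_{\xi_r}$.
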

\begin{proof}
 It is shown in \cite[VI,(4.19)]{Mac}, that
\begin{equation}
\label{eq:Macort}
\prod_{i,j=1}^r \frac{(tx_iy_j; q)_\infty}{(x_iy_j; q)_\infty} = \sum_{\xi\in{\rm{Par}}(r) } b_\xi(q,t)  P_\xi(x;q,t)P_\xi(y;q,t),
\end{equation}
where $b_\xi(q,t) = \left< P_\xi(x,q,t),  P_\xi(x,q,t) \right>_{q,t}^{-1}$ is computed by the following recursive formula in \cite[VI (6.19)]{Mac},
 \begin{equation}\label{eq:Macrec} b_0(q,t)=1,\qquad
b_\xi(q,t)=b_{\xi'}(q,t)\prod_{i=1}^r  \frac{1-q^{\xi_i-1} t^{r+1-i}}{1-q^{\xi_i} t^{r-i}}
\end{equation}
for $\xi=(\xi_1\ge\cdots\ge \xi_r)\in {\rm Par}(r)$ with $\xi_r>0$, and $\xi'=(\xi_1-1\ge\cdots\ge \xi_r-1)$.

If we set
$t=0$  in the above formulas, we see that the left hand side of \eqref{eq:Macort} is precisely the left hand side of \eqref{mcc}.
Also, the recursion \eqref{eq:Macrec} simplifies:
$$b_\xi(q,0)=b_{\xi'}(q,0) \frac{1}{1-q^{\xi_i}}$$ and gives by induction:
$$b_\xi(q,0) =  (q;q)^{-1}_{\xi_1 - \xi_2} \cdots (q;q)^{-1}_{\xi_{r-1} - \xi_r}
(q;q)^{-1}_{\xi_{r}}.$$
The proposition is proved.
\end{proof}
\subsection{} \begin{prop} We have,
$$\frac{(x_1\cdots x_r;q)_\infty}{\prod\limits_{1\le j\le r} (x_j; q)_\infty}=\sum_{\lambda\in {\rm{Par}}(r-1)}
\frac{P_{\lambda}(1,\cdots,1; q,0) P_{\lambda}(x_1,\dots, x_r; q,0)}
{(q;q)_{\lambda_1-\lambda_2} \dots (q;q)_{\lambda_{r-2}-\lambda_{r-1}}(q;q)_{\lambda_{r-1}}}.$$
\end{prop}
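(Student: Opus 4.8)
The plan is to deduce this from the Macdonald--Cauchy identity \eqref{mcc} of Proposition \ref{McCauchy} by the same ``column--stripping'' device that was used to pass from the first to the second identity of Lemma \ref{cau}; the role played there by \eqref{multischur} is now played by the multiplicativity of Macdonald polynomials under adjoining a full column. So the proof will not introduce any new hard input beyond Proposition \ref{McCauchy}.

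First I would record the needed $t=0$ specialization of that multiplicativity: if $\lambda\in {\rm Par}(r-1)$, $\ell\ge 0$, and $\xi=(\lambda_1+\ell\ge\lambda_2+\ell\ge\cdots\ge\lambda_{r-1}+\ell\ge\ell)\in {\rm Par}(r)$, then
$$P_\xi(x_1,\dots,x_r;q,0)=(x_1\cdots x_r)^\ell\, P_\lambda(x_1,\dots,x_r;q,0),$$
and likewise with $x$ replaced by $y$. This is the exact analogue of \eqref{multischur}; it is standard (see \cite[VI]{Mac}), the point being that $(x_1\cdots x_r)^\ell$ is itself the Macdonald polynomial $P_{(\ell^r)}$ and that multiplying the monomial expansion $P_\lambda=m_\lambda+\sum_{\nu<\lambda}u_{\lambda,\nu}m_\nu$ by $(x_1\cdots x_r)^\ell$ produces $m_\xi+\sum_{\nu<\xi}u_{\lambda,\nu-(\ell^r)}m_\nu$, which retains the triangularity and orthogonality that characterize $P_\xi$. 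Next observe that $\xi\mapsto(\lambda,\ell)$ with $\ell=\xi_r$ and $\lambda_i=\xi_i-\xi_r$ is a bijection ${\rm Par}(r)\xrightarrow{\sim}{\rm Par}(r-1)\times\bz_+$, and that along it $\xi_i-\xi_{i+1}=\lambda_i-\lambda_{i+1}$ for $1\le i\le r-2$, $\xi_{r-1}-\xi_r=\lambda_{r-1}$ and $\xi_r=\ell$, so the Pochhammer denominator in \eqref{mcc} factors as $(q;q)_{\lambda_1-\lambda_2}\cdots(q;q)_{\lambda_{r-2}-\lambda_{r-1}}(q;q)_{\lambda_{r-1}}(q;q)_\ell$.

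Substituting all of this into \eqref{mcc} and splitting $\sum_{\xi\in{\rm Par}(r)}$ into $\sum_{\lambda\in{\rm Par}(r-1)}\sum_{\ell\ge0}$, the variable $\ell$ decouples, and by Euler's identity $\sum_{\ell\ge 0}z^\ell/(q;q)_\ell=1/(z;q)_\infty$ with $z=x_1\cdots x_ry_1\cdots y_r$ one obtains
$$\frac{1}{\prod\limits_{1\le i,j\le r}(x_iy_j;q)_\infty}=\frac{1}{(x_1\cdots x_ry_1\cdots y_r;q)_\infty}\sum_{\lambda\in{\rm Par}(r-1)}\frac{P_\lambda(x;q,0)\,P_\lambda(y;q,0)}{(q;q)_{\lambda_1-\lambda_2}\cdots(q;q)_{\lambda_{r-2}-\lambda_{r-1}}(q;q)_{\lambda_{r-1}}}.$$
Clearing the factor $(x_1\cdots x_ry_1\cdots y_r;q)_\infty$ and then setting $y_1=\cdots=y_r=1$, so that $\prod_{1\le i,j\le r}(x_iy_j;q)_\infty=\prod_{1\le j\le r}(x_j;q)_\infty^{\,r}$ and $y_1\cdots y_r=1$, yields the proposition. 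The only step carrying genuine content is the column--stripping identity for $P_\xi$ at $t=0$; everything else is formal manipulation of power series graded by $|\xi|$, so the interchange of summations and the use of Euler's identity are harmless, and I expect the write-up to be short once that multiplicativity is invoked.
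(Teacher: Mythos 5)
Your proof is correct and is essentially the paper's own argument: strip off the power $|x|^{\xi_r}$ via the $t=0$ column--removal identity $P_\xi=|x|^{\xi_r}P_\lambda$, reindex ${\rm Par}(r)$ by ${\rm Par}(r-1)\times\bz_+$, resum over $\ell$ with Euler's identity $\sum_\ell a^\ell/(q;q)_\ell=1/(a;q)_\infty$, and specialize $y_1=\cdots=y_r=1$ in Proposition \ref{McCauchy}. Note that your computation (correctly) produces $\prod_{1\le j\le r}(x_j;q)_\infty^{\,r}$ in the denominator, consistent with Proposition \ref{lhs} and with the use made of this identity in Lemma \ref{crucial}; the missing exponent $r$ in the displayed statement is a typo in the paper, not a flaw in your argument.
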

\begin{pf} Using the fact that $$P_{\xi}(x; q,t) = |x|^{\xi_r}P_\lambda(x; q,t),\ \qquad \lambda=(\xi_1-\xi_r\ge\cdots\ge\xi_{r-1}-\xi_r\ge 0),$$ we get,
\begin{gather*}\sum_{\xi\in{\rm{Par}}(r) } \frac{P_\xi(x; q,0) P_\xi(y; q,0)}{(q;q)_{\xi_1-\xi_2} \cdots
(q;q)_{\xi_{r-1}-\xi_r} (q;q)_{\xi_r}}=\sum_{\ell\ge 0}\frac{|xy|^\ell}{(q;q)_\ell}\sum_{\lambda\in{\rm{Par}}(r-1) }\frac{P_\lambda(x; q,0) P_\lambda(y; q,0)}{(q;q)_{\lambda_1} \cdots
(q;q)_{\lambda_{r-1}} }\\ = \frac{1}{(|xy|;q)_\infty}\sum_{\lambda\in{\rm{Par}}(r-1) }\frac{P_\lambda(x; q,0) P_\lambda(y; q,0)}{(q;q)_{\lambda_1} \cdots
(q;q)_{\lambda_{r-1}} }, \end{gather*} where we have used the fact that for any indeterminate $a$, we have $$\sum_{k=0}^\infty\frac{a^k}{(q;q)_k}= \frac{1}{(a;q)_\infty}.$$ Setting $y_1=\cdots y_r=1$ and using Proposition \ref{McCauchy} completes the proof.
\end{pf}
\subsection{} The following is now an immediate consequence of Proposition \ref{lhs} and Proposition \ref{McCauchy}.
\begin{lem}\label{crucial} For $\mu\in {\rm{Par}}(r-1)$, write $$P_{\mu}(x_1,\dots, x_r; q,0) = \sum_{\ell\ge 0}\sum_{ \lambda\in {\rm{Par}}(r-1)} \eta^\mu_{\lambda,\ell} (q) s_{\lambda,\ell}(x_1,\dots, x_r).$$ Then,
$$\frac{s_\lambda(1,\ldots,1)}
{(q;q)^{r^2-1}_\infty}=\sum_{\mu\in {\rm{Par}}(r-1)}\sum_{\ell\ge 0}
\frac{\eta_{\lambda,\ell}^\mu(q) P_{\mu}(1,\cdots,1; q,0) }
{(q;q)_{\mu_1-\mu_2} \dots (q;q)_{\mu_{r-2}-\mu_{r-1}}(q;q)_{\mu_{r-1}}}.$$
\end{lem}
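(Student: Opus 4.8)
The claim is a purely formal consequence of two identities already proved: Proposition~\ref{lhs}, which expands
$$\frac{(x_1\cdots x_r; q)_\infty}{(x_1;q)_\infty^r \cdots(x_r;q)_\infty^r}=\sum_{\lambda\in {\rm Par}(r-1)}\sum_{ \ell\ge 0}\psi_{\lambda,\ell}(q)s_{\lambda,\ell}$$
with $\sum_{\ell\ge 0}\psi_{\lambda,\ell}(q) = s_\lambda(1,\ldots,1)/(q;q)^{r^2-1}_\infty$, and the Macdonald–Cauchy identity of the previous subsection (the $y_1=\cdots=y_r=1$ specialization of Proposition~\ref{McCauchy}), which expands the same power series as
$$\sum_{\mu\in {\rm{Par}}(r-1)}
\frac{P_{\mu}(1,\cdots,1; q,0)\, P_{\mu}(x_1,\dots, x_r; q,0)}
{(q;q)_{\mu_1-\mu_2} \dots (q;q)_{\mu_{r-2}-\mu_{r-1}}(q;q)_{\mu_{r-1}}}.$$
So the first step is simply to equate these two expressions for the left-hand side.

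Next I would substitute the definition $P_{\mu}(x;q,0) = \sum_{\ell\ge 0}\sum_{\lambda\in {\rm Par}(r-1)} \eta^\mu_{\lambda,\ell}(q)\, s_{\lambda,\ell}(x)$ into the Macdonald–Cauchy side and collect the coefficient of each basis element $s_{\lambda,\ell}$. Since $\{s_{\lambda,\ell} : \lambda\in {\rm Par}(r-1),\ \ell\ge 0\}$ is a $\bz$-linear basis of $\Lambda_r$ (and orthonormal for the Macdonald inner product $(\ ,\ )$, as recorded in the subsection on Schur functions), one may compare coefficients term by term. Comparing the coefficient of $s_{\lambda,\ell}$ gives, for each fixed $\ell$,
$$\psi_{\lambda,\ell}(q)=\sum_{\mu\in {\rm{Par}}(r-1)}
\frac{\eta_{\lambda,\ell}^\mu(q)\, P_{\mu}(1,\cdots,1; q,0)}
{(q;q)_{\mu_1-\mu_2} \dots (q;q)_{\mu_{r-2}-\mu_{r-1}}(q;q)_{\mu_{r-1}}}.$$
Then summing over $\ell\ge 0$ and applying the identity $\sum_{\ell\ge 0}\psi_{\lambda,\ell}(q)=s_\lambda(1,\ldots,1)/(q;q)^{r^2-1}_\infty$ from Proposition~\ref{lhs} yields the assertion.

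**Convergence bookkeeping.** The one point needing care — and the place I expect a referee to look hardest — is the legitimacy of summing over $\ell$ after comparing coefficients: one must know that for each power $q^m$ only finitely many terms contribute, so that the interchange of the two summations ($\sum_\ell$ and the expansion into the basis) is valid in $\bz[[q]]$. This is exactly what Proposition~\ref{lhs} supplies: $\psi_{\lambda,\ell}(q)\in q^{\lfloor \ell/r\rfloor}\bz[[q]]$, so for fixed $m$ the coefficient $b_{\lambda,\ell}^m$ of $q^m$ vanishes once $\ell>mr$. Hence $\sum_{\ell\ge 0}\psi_{\lambda,\ell}(q)$ is a well-defined element of $\bz[[q]]$ and the rearrangement of the double sum is justified coefficient-by-coefficient in $q$. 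On the Macdonald side the same finiteness follows because $P_\mu(x;q,0)$ is a polynomial in the $x_i$ of degree $|\mu|$, so $\eta^\mu_{\lambda,\ell}(q)=0$ unless $|\lambda|+r\ell = |\mu|$, forcing $\ell\le |\mu|/r$ for each fixed $\lambda$; combined with the explicit $q$-divisibility this makes the rearrangement unambiguous. With these observations the proof is just the chain of equalities indicated above, so the argument is short once the bookkeeping is in place.
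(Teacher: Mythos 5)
Your argument is exactly the paper's: the lemma is stated there as an immediate consequence of Proposition~\ref{lhs} and the $y_1=\cdots=y_r=1$ specialization of Proposition~\ref{McCauchy}, i.e.\ one equates the two expansions of $(x_1\cdots x_r;q)_\infty\big/\prod_i(x_i;q)_\infty^r$, compares coefficients of the basis $s_{\lambda,\ell}$, and sums over $\ell$ using $\sum_{\ell}\psi_{\lambda,\ell}=s_\lambda(1,\ldots,1)/(q;q)_\infty^{r^2-1}$. Your convergence bookkeeping (the $q$-divisibility of $\psi_{\lambda,\ell}$ and the homogeneity constraint $|\lambda|+r\ell=|\mu|$) correctly supplies the finiteness the paper leaves implicit, so the proposal is correct and follows the same route.
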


\section{Proof of Proposition \ref{red2}} The proof   involves putting together known results on the Hilbert series of the projective, global Weyl and local Weyl modules with the combinatorial identities which were established in the previous section. We will use the notation of the previous sections freely.

\subsection{} The Hilbert series  of $P(\lambda,0)$ is easily calculated by using the Poincare Birkhoff Witt theorem. Thus, we have $$P(\lambda,0)=\bu(\lie g[t])\otimes_{\bu(\lie g)} V(\lambda)\cong\bu(\lie g[t]_+)\otimes V(\lambda),$$ where $\lie g[t]_+=\lie g\otimes t\bc[t]$ and the isomorphism is one of vector spaces. In particular,
$$P(\lambda,0)[s]=\bu(\lie g[t]_+)[s]\otimes V(\lambda),\ \ {\rm{and}}\ \ \bu(\lie g[t]_+)[s]\cong S(\lie g[t]_+)[s],$$ where $S(\lie g[t]_+)$ is the symmetric algebra of $\lie g[t]_+$, and the isomorphism is again one of vector spaces.
Together with Proposition \ref{lhs}, we get
\begin{equation}\label{Ver}\mathbb H(P(\lambda,0))=\dim V(\lambda)\mathbb H(S(\lie g[t]_+))=\frac{\dim V(\lambda)}{(q;q)_{\infty}^{\dim{\lie g}}} =\sum_{\ell\ge 0}\psi_{\lambda,\ell}(q).\end{equation}
In the special case when $\lie g$ is of type $\lie{sl}_{r}$, it is well--known that $\dim V(\lambda)=s_\lambda(1,\cdots,1)$ where we identify $P^+$ with the set ${\rm{Par}}(r-1)$ by sending $\lambda=\sum_{i=1}^{r-1}\lambda_i\omega_i$ to the partition whose $j^{th}$ part is $\sum_{i=j}^{r-1}\lambda_j\omega_j$. Hence we have,
\begin{lem}\label{hproj} Suppose that $\lie g$ is of type $\lie{sl}_{r}$. Then,
$$\mathbb H(P(\lambda,0))= \frac{s_\lambda(1,\cdots,1)}{(q;q)_\infty^{r^2-1}}.$$\hfill\qedsymbol
\end{lem}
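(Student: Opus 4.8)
\textbf{Proof plan for Lemma \ref{hproj}.} The statement is an immediate combination of the general Hilbert-series computation in \eqref{Ver} with the classical Weyl dimension formula specialized to type $\lie{sl}_r$, so the proof will be short. First I would recall from \eqref{Ver} that for an arbitrary simple Lie algebra $\lie g$ one has
$$\mathbb H(P(\lambda,0))=\dim V(\lambda)\,\mathbb H(S(\lie g[t]_+))=\frac{\dim V(\lambda)}{(q;q)_\infty^{\dim\lie g}},$$
the denominator coming from the fact that $S(\lie g[t]_+)\cong S(\lie g\otimes t\bc[t])$ is a polynomial algebra on $\dim\lie g$ generators in each positive degree, whence its Hilbert series is $\prod_{i\ge 1}(1-q^i)^{-\dim\lie g}=(q;q)_\infty^{-\dim\lie g}$.

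Next I would specialize to $\lie g=\lie{sl}_r$. Here $\dim\lie g=r^2-1$, which accounts for the exponent in the denominator. For the numerator I would invoke the standard fact that, under the identification of $P^+$ with ${\rm Par}(r-1)$ described just before the lemma (sending $\lambda=\sum_{i=1}^{r-1}\lambda_i\omega_i$ to the partition with $j$-th part $\sum_{i=j}^{r-1}\lambda_i$), the dimension of the irreducible $\lie{sl}_r$-module $V(\lambda)$ equals $s_\lambda(1,\ldots,1)$, the value of the Schur polynomial in $r$ variables all set to $1$. This is the content of the Weyl dimension formula combined with the Jacobi--Trudi/bialternant description of $s_\lambda$; equivalently it follows from the fact that $s_\lambda(x_1,\ldots,x_r)$ is the $\lie{gl}_r$-character of the irreducible module with highest weight $\lambda$, and setting all $x_i=1$ counts a basis. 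Substituting $\dim V(\lambda)=s_\lambda(1,\ldots,1)$ and $\dim\lie g=r^2-1$ into the displayed formula gives exactly $\mathbb H(P(\lambda,0))=s_\lambda(1,\ldots,1)/(q;q)_\infty^{r^2-1}$, as claimed.

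There is essentially no obstacle here: the only point requiring a word of justification is the PBW isomorphism $\bu(\lie g[t])\otimes_{\bu(\lie g)}V(\lambda)\cong \bu(\lie g[t]_+)\otimes V(\lambda)$ as graded vector spaces (with $\bu(\lie g[t]_+)[s]\cong S(\lie g[t]_+)[s]$ as graded vector spaces, the grading being by $t$-degree), which is already recorded in the text preceding the lemma, together with the identification of $\dim V(\lambda)$ with $s_\lambda(1,\ldots,1)$, which is classical. Hence the proof reduces to reading off \eqref{Ver} in type $\lie{sl}_r$, and the lemma follows.
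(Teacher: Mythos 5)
Your proposal is correct and follows exactly the paper's own argument: apply the PBW computation in \eqref{Ver} to get $\mathbb H(P(\lambda,0))=\dim V(\lambda)/(q;q)_\infty^{\dim\lie g}$, then specialize to $\lie{sl}_r$ using $\dim\lie g=r^2-1$ and the classical identity $\dim V(\lambda)=s_\lambda(1,\ldots,1)$ under the stated identification of $P^+$ with ${\rm Par}(r-1)$. Nothing is missing.
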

\subsection{} We now recall the relationship between the graded characters of global and local Weyl modules. This was established in \cite[Proposition 3.7]{BCM} using results proved in \cite{CL}, \cite{CPweyl},  \cite{FoL} and \cite{Naoi}.
\begin{prop}\label{globlocrel}  For $\lambda=\sum_{i=1}^n\lambda_i\omega_i\in P^+$, we have $${\rm{ch}}_{\gr}W(\lambda,0)=\frac{{\rm{ch}}_{\gr}W_{\loc}(\lambda,0)}{\prod_{i=1}^n(q;q)_{\lambda_i}},\qquad
\mathbb H(W(\lambda,0))= \frac{\mathbb H(W_{\loc}(\lambda,0))}{\prod_{i=1}^n(q;q)_{\lambda_i}}.$$\hfill\qedsymbol\end{prop}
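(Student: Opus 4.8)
The statement to be proved is the relation
$$\mathrm{ch}_{\gr}W(\lambda,0)=\frac{\mathrm{ch}_{\gr}W_{\loc}(\lambda,0)}{\prod_{i=1}^n(q;q)_{\lambda_i}},$$
from which the Hilbert series identity follows by setting $e(\mu)\mapsto 1$ for all $\mu\in P$. The approach I would take rests on the algebra that naturally acts on the global Weyl module: for each $i\in I$ there is a commutative algebra $\mathbf{A}_\lambda(i)$ (a polynomial ring in $\lambda_i$ variables, or rather its localization/completion in the graded setting) generated by the symmetric functions in the action of $\{h_i\otimes t^s\}$ on the highest weight space, so that $\mathbf{A}_\lambda=\bigotimes_i \mathbf{A}_\lambda(i)$ acts on $W(\lambda,0)$ commuting with $\lie g[t]$. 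The key structural input, proved in \cite{CFK} (and for $\lie{sl}_{n+1}$ already in \cite{CPweyl}), is that $W(\lambda,0)$ is \emph{free} as a module over $\mathbf{A}_\lambda$, and that the local Weyl module is recovered as $W_{\loc}(\lambda,0)\cong W(\lambda,0)\otimes_{\mathbf{A}_\lambda}\bc_0$, the fiber at the augmentation ideal. Since $\mathbf{A}_\lambda(i)$ is (as a graded vector space) the ring of symmetric polynomials in $\lambda_i$ variables, its graded dimension is $\prod_{j=1}^{\lambda_i}\frac{1}{1-q^j}=\frac{1}{(q;q)_{\lambda_i}}$ — but here one must be careful: the natural grading gives each of the $\lambda_i$ elementary symmetric generators degree $1,2,\dots,\lambda_i$, which is exactly what produces $(q;q)_{\lambda_i}$ in the denominator.

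First I would set up the $\mathbf{A}_\lambda$-action precisely and cite the freeness theorem; this is the conceptual heart and I would not reprove it. Second, I would record that freeness plus the identification $W_{\loc}(\lambda,0)=W(\lambda,0)\otimes_{\mathbf{A}_\lambda}\bc_0$ gives an isomorphism of graded $\lie g$-modules (after accounting for grading)
$$W(\lambda,0)\;\cong\;W_{\loc}(\lambda,0)\otimes_{\bc}\mathbf{A}_\lambda$$
where $\mathbf{A}_\lambda$ is regarded as a graded vector space with trivial $\lie g$-action. Third, I would compute $\mathrm{ch}_{\gr}$ of the right-hand side: since $\mathbf{A}_\lambda$ carries no weight, its contribution is a scalar power series multiplying $\mathrm{ch}_{\gr}W_{\loc}(\lambda,0)$, and that scalar is $\prod_i \mathbb{H}(\mathbf{A}_\lambda(i))=\prod_i\frac{1}{(q;q)_{\lambda_i}}$. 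Finally I would remark that the Hilbert-series statement is the image of the character statement under the ring map $\bz[P]\to\bz$, $e(\mu)\mapsto 1$.

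The main obstacle — really the only subtle point — is the \textbf{grading bookkeeping}: one must check that the grading on $\mathbf{A}_\lambda$ induced from $t\bc[t]$ and the PBW/symmetric-algebra identification assigns the generators of $\mathbf{A}_\lambda(i)$ degrees $1,\dots,\lambda_i$ (equivalently, that $\mathbf{A}_\lambda(i)$ is graded-isomorphic to the coinvariant-free part $\bc[z_1,\dots,z_{\lambda_i}]^{S_{\lambda_i}}$ with $\deg z_j=1$), so that its graded dimension is genuinely $(q;q)_{\lambda_i}^{-1}$ and not some reindexed variant. Once this is pinned down the proposition is immediate; everything else is a direct transcription of the freeness theorem of \cite{CFK} together with the definitions of $\mathrm{ch}_{\gr}$ and $\mathbb{H}$ from Section 1.
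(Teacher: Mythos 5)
Your argument is correct and is essentially the proof underlying the paper's citation: the paper gives no proof of this proposition, referring instead to \cite[Proposition 3.7]{BCM}, which rests on exactly the freeness of $W(\lambda,0)$ over $\mathbf{A}_\lambda$ from \cite{CFK} (with the dimension input from \cite{CL}, \cite{FoL}, \cite{Naoi}) and the graded dimension $(q;q)_{\lambda_i}^{-1}$ of symmetric polynomials in $\lambda_i$ degree-one variables. Your flagged subtlety about the grading of the generators is the right thing to check, and your resolution of it is correct.
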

\def \gs {{\lie sl}}
\subsection{} The final piece of information we need is on the graded character of the local Weyl modules. We restrict our attention to the case of $\lie{sl}_{r}$ but direct the interested reader towards \cite{Naoi} for the general case. There are a well--known family of $\bz_+$--graded  modules for the subalgebra $\lie n^+\otimes \bc[t]\oplus(\lie n^-\oplus\lie h)\otimes t\bc[t]$ of $\lie g[t]$ called the Demazure modules (see \cite{Ku} for an exposition). In the case of $\lie{sl}_r$ it was proved in \cite{S}  that the characters of certain Demazure  modules which are indexed by $P^+$ are  given by specializing the Macdonald polynomials at $t=0$. Moreover, these Demazure modules actually admit the structure of a $\lie g[t]$--module. The main result of \cite{CL} establishes that $W_{\loc}(\lambda)$ is graded isomorphic to  such  a Demazure module and can be summarized as follows.
\begin{thm}\label{chweyl} Assume that $\lie g$ is of type $\lie{sl}_{r}$ and let $\lambda=\sum_{i=1}^s\lambda_i\omega_i\in P^+$.
Then $$ \sum_{k\ge 0}[(W_{\loc}(\lambda,0)[k]: V(\mu)]q^k = \sum_{\ell\ge 0} \eta^\lambda_{\mu,\ell} (q),\qquad \ch_{\gr}W_{\loc}(\lambda,0)=\sum_{\mu\in P^+}\sum_{\ell\ge 0}\eta^\lambda_{\mu,\ell} (q)\ch_{\lie g}V(\mu),$$ where the $\eta^
\lambda_{\mu,ell}$ are as defined in Lemma \ref{crucial}.
\hfill\qedsymbol\end{thm}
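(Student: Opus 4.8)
The plan is to derive the theorem by combining the two cited structural results with the definition of the $\eta^\lambda_{\mu,\ell}$ in \lemref{crucial}. First I would invoke the main theorem of \cite{CL}: for $\lie g$ of type $\lie{sl}_r$, the local Weyl module $W_{\loc}(\lambda,0)$ is isomorphic, as a $\bz_+$-graded $\lie g[t]$-module, to a level-one Demazure module inside a level-one integrable representation of the affine Kac--Moody algebra $\widehat{\lie{sl}}_r$; here the grading is the one induced by the homogeneous degree derivation, shifted so that the cyclic generator sits in degree $0$ (as $w_{\lambda,0}$ does). Since $\lambda$ is dominant, this Demazure module is stable under $\lie g$, so its character is symmetric. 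Next I would invoke \cite{S}: the $q$-graded character of precisely this Demazure module, recorded in the $r$ variables $x_1,\dots,x_r$ that carry the $\lie{gl}_r$-weight, is the $t=0$ specialization $P_\lambda(x_1,\dots,x_r;q,0)$ of the Macdonald polynomial, under the identification of $P^+$ with ${\rm Par}(r-1)$ recalled earlier in this section (the one for which $\dim V(\mu)=s_\mu(1,\dots,1)$). Combining the two identifications yields the single equality
$$\ch_{\gr}W_{\loc}(\lambda,0)=P_\lambda(x_1,\dots,x_r;q,0),$$
both sides being read in the $q$-graded character ring of $\lie{sl}_r$.

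The rest is bookkeeping with the Schur expansion supplied by \lemref{crucial}. Writing $P_\lambda(x;q,0)=\sum_{\ell\ge 0}\sum_{\mu\in{\rm Par}(r-1)}\eta^\lambda_{\mu,\ell}(q)\,s_{\mu,\ell}(x)$ and using that $P_\lambda$ is homogeneous of degree $|\lambda|$, only finitely many terms occur — one needs $|\mu|+r\ell=|\lambda|$ — so each $\sum_{\ell\ge 0}\eta^\lambda_{\mu,\ell}(q)$ is a genuine element of $\bz[q]$. Now $s_{\mu,\ell}=|x|^{\ell}s_\mu(x_1,\dots,x_r)$, and upon passing to the character ring of $\lie{sl}_r$ — legitimate because $W_{\loc}(\lambda,0)$ is an $\lie{sl}_r[t]$-module — the monomial $|x|=x_1\cdots x_r$ specializes to $1$ while $s_\mu(x_1,\dots,x_r)$ becomes $\ch_{\lie g}V(\mu)$. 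Hence $s_{\mu,\ell}$ contributes $\ch_{\lie g}V(\mu)$ for every $\ell$, and grouping the terms of the expansion by the value of $\mu$ gives
$$\ch_{\gr}W_{\loc}(\lambda,0)=\sum_{\mu\in P^+}\Big(\sum_{\ell\ge 0}\eta^\lambda_{\mu,\ell}(q)\Big)\ch_{\lie g}V(\mu),$$
which is the second asserted formula. Extracting the coefficient of $\ch_{\lie g}V(\mu)$, and using \eqref{jhmult} together with the linear independence of the $\ch_{\lie g}V(\mu)$ and complete reducibility of each graded piece $W_{\loc}(\lambda,0)[k]$ — so that this coefficient equals $\sum_{k\ge 0}[W_{\loc}(\lambda,0)[k]:V(\mu)]q^k$ — yields the first formula. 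Non-negativity and polynomiality of the $\eta^\lambda_{\mu,\ell}$ are then automatic from this representation-theoretic reading.

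I expect the one genuinely delicate point to be matching normalizations: one must check that the grading transported to $W_{\loc}(\lambda,0)$ from the degree derivation of $\widehat{\lie{sl}}_r$, after the shift placing the generator in degree $0$, corresponds term-by-term to the exponent of $q$ appearing in Sanderson's Macdonald formula (rather than to $q^{-1}$, or with an extra overall power of $q$), and that the dictionary between the $r$ ``$\lie{gl}_r$-type'' variables of Macdonald theory and the rank-$(r-1)$ weight lattice of $\lie{sl}_r$ is the same one used implicitly throughout this section, so that no stray power of $|x|$ is lost in translation. Once these conventions are pinned down, the theorem is a direct assembly of \cite{CL}, \cite{S} and \lemref{crucial}.
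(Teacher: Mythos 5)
Your proposal assembles exactly the ingredients the paper itself relies on: the paper gives no separate proof of this theorem, stating it as a summary of the isomorphism $W_{\loc}(\lambda,0)\cong$ Demazure module from \cite{CL} together with Sanderson's identification \cite{S} of the Demazure character with $P_\lambda(x;q,0)$, after which the expansion into the $s_{\mu,\ell}$ from Lemma \ref{crucial} and the passage to $\ch_{\lie g}V(\mu)$ is the same bookkeeping you carry out. Your additional attention to the grading normalization and the $\lie{gl}_r$ versus $\lie{sl}_r$ variable dictionary is a sensible precaution but does not constitute a departure from the paper's route.
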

Using Lemma \ref{crucial} we have  the following corollary.\begin{cor} \begin{gather*}\mathbb H(W_{\loc}(\lambda,0))\iffalse =\sum_{\mu\in P^+}\sum_{\ell\ge 0}\eta^{\lambda}_{\mu,\ell}(q)\dim V(\mu)=\sum_{\mu\in P^+}\sum_{\ell\ge 0}\eta^{\lambda}_{\mu,\ell}(q)s_\mu(1,\cdots,1)\\ \fi =P_\lambda(1,\cdots,1;q,0).\end{gather*}\end{cor}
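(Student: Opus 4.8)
The plan is to read off $\mathbb H(W_{\loc}(\lambda,0))$ directly from the graded character formula in \thmref{chweyl}, and then to recognize the resulting $q$-series as the asserted specialization of a Macdonald polynomial by invoking nothing more than the Schur expansion that \emph{defines} the coefficients $\eta^\lambda_{\mu,\ell}$ in \lemref{crucial}.

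\emph{Step 1.} Passing from $\ch_{\gr}$ to $\mathbb H$ amounts to applying the ring homomorphism $\bz[P]\to\bz$, $e(\nu)\mapsto 1$: for $V\in\Ob\cal I$ each $V[r]$ is a finite-dimensional $\lie g$-module, so $\dim V[r]=\sum_{\mu\in P^+}\dim\Hom_{\lie g}(V(\mu),V[r])\dim V(\mu)$, and this map sends $\ch_{\lie g}V(\mu)$ to $\dim V(\mu)$. For $\lie g$ of type $\lie{sl}_r$, and under the identification $P^+\cong{\rm Par}(r-1)$ fixed earlier, the Weyl dimension formula gives $\dim V(\mu)=s_\mu(1,\dots,1)$; equivalently, this specialization is the substitution $x_1=\cdots=x_r=1$ in the realization of $\lie{sl}_r$-characters inside $\Lambda'_r$. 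Applying it to the second identity of \thmref{chweyl} yields
$$\mathbb H(W_{\loc}(\lambda,0))=\sum_{\mu\in{\rm Par}(r-1)}\sum_{\ell\ge 0}\eta^\lambda_{\mu,\ell}(q)\,s_\mu(1,\dots,1).$$
This is a finite sum, $W_{\loc}(\lambda,0)$ being finite-dimensional.

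\emph{Step 2.} By the defining expansion in \lemref{crucial}, $P_\lambda(x_1,\dots,x_r;q,0)=\sum_{\ell\ge 0}\sum_{\mu\in{\rm Par}(r-1)}\eta^\lambda_{\mu,\ell}(q)\,s_{\mu,\ell}(x_1,\dots,x_r)$, where $s_{\mu,\ell}=|x|^\ell s_\mu$. Setting $x_1=\cdots=x_r=1$ turns every $|x|^\ell$ into $1$, so the right-hand side becomes $\sum_{\ell\ge 0}\sum_\mu\eta^\lambda_{\mu,\ell}(q)\,s_\mu(1,\dots,1)$, which is exactly the expression obtained in Step 1. Comparing the two gives $\mathbb H(W_{\loc}(\lambda,0))=P_\lambda(1,\dots,1;q,0)$, as claimed.

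\emph{Main obstacle.} There is none of substance: the entire content is already packaged in \thmref{chweyl} (which combines the realization of $W_{\loc}(\lambda,0)$ as a Demazure module from \cite{CL} with the Macdonald-at-$t=0$ character formula of \cite{S}) and in the combinatorial \lemref{crucial}. The only points needing care are purely formal — translating between the integral-group-ring description of characters and the symmetric-function language of Section 4, and keeping the weight/partition dictionary $P^+\cong{\rm Par}(r-1)$ straight — and these are routine.
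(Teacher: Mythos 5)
Your proof is correct and coincides with the paper's own argument: the authors obtain the corollary by specializing the graded character formula of Theorem \ref{chweyl} via $e(\nu)\mapsto 1$ (so $\ch_{\lie g}V(\mu)\mapsto\dim V(\mu)=s_\mu(1,\dots,1)$) and then recognizing the resulting sum as the $x_1=\cdots=x_r=1$ specialization of the Schur expansion of $P_\lambda(x;q,0)$ that defines the $\eta^\lambda_{\mu,\ell}$ in Lemma \ref{crucial}. No further comment is needed.
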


\subsection{} We now have,
\begin{gather*}\sum_{k\ge 0}\sum_{\mu\in P^+}\mathbb H(W(\mu,0) ) [W_{\loc}(\mu,0): V(\lambda,k)]q^k
\\ =\sum_{\mu\in P^+}\left(\sum_{\ell\ge 0}\eta^\mu_{\lambda,\ell}(q)\right)\frac{\mathbb H(W_{\loc}(\mu,0))}{\prod_{i=1}^n(q;q)_{\mu_i}}\qquad
=\sum_{\mu\in P^+}\left(\sum_{\ell\ge 0}\eta^\mu_{\lambda,\ell}(q)\right)\frac{P_\mu(1,\cdots,1;q,0)
}{\prod_{i=1}^n(q;q)_{\mu_i}}\\ = \frac{s_\lambda(1,\ldots,1)}
{(q;q)^{r^2-1}_\infty}, \end{gather*} where the last equality is by using Lemma \ref{crucial}. Together with Lemma \ref{hproj}, we have now established that,
$$\mathbb H(P(\lambda,0))=\sum_{k\ge 0}\sum_{\mu\in P^+}\mathbb H(W(\mu,0) ) [W_{\loc}(\mu,0): V(\lambda,k)]u^k,$$ which is precisely the statement of Proposition \ref{red2}.

\subsection{} It remains to prove Theorem \ref{bggtr}. Set $M=P(\lambda_s,r)$ and let $M_\ell$, $\ell\in\bz_+$ be the canonical filtration of $M$. Then, it is clear that $$ M^k= P^k(\lambda_s,r)\cong M/M_{k+1},$$ which proves that the canonical filtration of $M^k$ is finite and in fact is given by the submodules $M_\ell/M_{k+1}$ where $0\le \ell\le k$. Moreover $$(M_\ell/M_{k+1})/(M_{\ell+1}M_{k+1})\cong M_\ell/M_{\ell+1},$$ and Theorem \ref{bggtr} is proved.

\iffalse

\subsection{}
Finally, we deduce the following corollaries of Theorem \ref{bgg}.
\begin{cor}
Suppose ${\lie g} = {\lie gl}_r$. Then we have
$$\ch_q M_\lambda = \sum_{\mu \in P^+} \ch_q  W^g_\mu \dim_q [W^l_\mu:
V_\lambda].$$

\begin{cor}
Let $S_\xi$ be the dual basis to Schur functions with respect to the Macdonald scalar product. Then
$$\sum_{n\ge 0} S_{\lambda,n}(y_1,\dots, y_r,0,\dots; 0,q) = \frac{\ch_{\gr}
P(\lambda,0)}{(y_1\dots y_r)_\infty}.$$
\end{cor}
\fi

\end{document}